\newtheorem{thm}{Theorem}[section]
\newtheorem*{thm*}{Theorem}
\newtheorem{prop}[thm]{Proposition}
\newtheorem*{prop*}{Proposition}
\newtheorem{lemma}[thm]{Lemma}
\newtheorem*{lemma*}{Lemma}
\newtheorem{assm}[thm]{Assumption}
\newtheorem*{assm*}{Assumption}
\newtheorem{corollary}[thm]{Corollary}
\newtheorem*{corollary*}{Corollary}
\newtheorem{claim}[thm]{Claim}
\newtheorem*{claim*}{Claim}
\newtheorem{defi}[thm]{Definition}
\newtheorem*{defi*}{Definition}
\newtheorem{fact}[thm]{Fact}
\newtheorem*{fact*}{Fact}
\newtheorem*{example*}{Example}
\newtheorem*{examples*}{Examples}
\newtheorem*{remark*}{Remark}
\newtheorem{conjecture}[thm]{Conjecture}
\newtheorem*{conjecture*}{Conjecture}
\newtheorem*{goal*}{Goal}
\newtheorem*{subgoal*}{Subgoal}
\newtheorem*{question*}{Question}
\newtheorem*{problem*}{Problem}
\DeclareMathOperator{\acl}{acl}
\DeclareMathOperator{\dcl}{dcl}
\DeclareMathOperator{\RM}{RM}
\DeclareMathOperator{\DM}{DM}
\newcommand{\s}{\subseteq}
\begin{document}

\title{Strongly minimal reducts of ACVF} 
\author{Santiago Pinzon} 
 \date{October 31, 2022} 

\maketitle 


\begin{center}
    \large\textbf{Abstract}
\end{center}

Let $\mathbb K=(K,+,\cdot,v,\Gamma)$ be a valued algebraically closed field of characteristic and $(G,\oplus)$ be a $\mathcal K$-interpretable group that is either locally isomorphic to $(K,+)$ or to $(K,\cdot)$. Then if $\mathcal G=(G,\oplus,\ldots)$ is a strongly minimal non locally modular structure intepretable in $\mathbb K$, it interprets a field.

We also present an strategy for proving the same without the assumption of having a definable group operation.

This document is the PhD thesis of the author and it was advised by professors Alf Onshuus and Assaf Hasson.
\tableofcontents

\chapter{Introduction and Preliminaries}\label{introductionAndPreliminaries}

\section{Introduction}

Given any field $k$ there are many families of curves in $k^2$ that one can define. For example one may define the family of all lines on $k^2$. This, together with the points of $k^2$, form a plane geometry in the sense of Chapter 2.1 of \cite{algGeoArt}. In here Artin proved a converse for this. Namely if one starts with a set $E$ and an incidence system of lines and points on $E$ satisfying some geometrical axioms, then one can to build a field $k$ such that the lines of $E$ are given by linear equations on $k$. 

One can wonder whether this result in some other settings where one has a good notion of dimension and a big enough family of curve -in the sense of that dimension-. In this way, in \cite{rabinovich}, Rabinovich proved that if $k$ is an algebraically closed valued field and $\mathcal D=(D,\ldots)$ is some structure whose universe is $\mathbb A ^1 (k)$ and whose definable sets are constructible sets, then if $\mathcal D$ has a big enough definable family of curves contained in $D^2$, then $\mathcal D$ interprets an infinite field. 

An example of such a family is the family of all lines contained in $\mathbb A^1 \times \mathbb A^1$. This is a two dimensional family of curves, and in this case is not hard to see that one can recover an infinite field. For example if one consideres the sub-family of lines passing trough $(0,0)$ this can be identified with $\{l_m:m\in k\}\cup l_\infty$. Where for $m\in K$ $l_m$ is the line of equation $y=mx$ and $l_\infty$ is the line of equation $x=0$. In this case one has that $l_m\circ l_n=l_{m\cdot  n}$ so one can recover the multiplicative group of $k$, $\mathbb G_m$ from composition between elements of that subfamily. With some more work we can also recover the additive group $\mathbb G_a$ and also the action of $\mathbb G_m$ on $\mathbb G_a$.


In late 1970s, Zilber abstracted this principle in the following conjecture:

\begin{conjecture}(Zilber's Trichotomy Principle)

If $\mathcal D$ is a strongly minimal structure and there is a big enough $\mathcal D$-definable family of plane curves, then $\mathcal D$ interprets an infinite field.
\end{conjecture}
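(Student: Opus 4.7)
The plan is to follow the classical two-step Hrushovski-Zilber strategy: first extract a definable group from the combinatorial data of the curve family, and then recover a field from a rich action of the group on itself. The two main tools are the Hrushovski group configuration theorem and the principle that a strongly minimal group equipped with a sufficiently large family of definable pseudo-endomorphisms interprets an infinite field.

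For the first step, I would use non-local-modularity to produce a group configuration. Non-local-modularity is precisely the existence of tuples witnessing a dimension defect: triples $(a,b,c)$ whose ranks violate the sub-modular inequality that characterizes locally modular geometries. A "big enough" definable family of plane curves lets one realize such a defect geometrically — three curves meeting three points in the standard six-element incidence pattern. Hrushovski's group configuration theorem then yields a strongly minimal group $G$ interpretable in $\mathcal D$, whose generic type has the same rank as the generic type of $\mathcal D$.

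The second and harder step is to promote $G$ to a field. Each generic curve $C$ in the family, after fixing a base point, gives a definable partial map on $G$ which, up to translation, can be viewed as a pseudo-endomorphism. Composition of such maps supplies a second binary operation, and the goal is to show that together with the group law they satisfy the ring axioms and close up into a strongly minimal ring, which by strong minimality must be an infinite field. The main obstacle is exactly this second step: in the stated generality the conjecture is known to fail (Hrushovski's ab initio constructions produce non-locally-modular strongly minimal sets in which no infinite group is interpretable at all). Any real proof must therefore invoke geometric structure beyond bare combinatorics — a Zariski topology in Zilber's theorem for Zariski geometries, o-minimality in the real case, and, as in the setting of this thesis, the valuation on $\mathbb K$, which must be used both to ensure that the group produced in step one has a concrete algebraic form and to force the ring of pseudo-endomorphisms to be a genuine field rather than a proper subring of $\End(G)$.
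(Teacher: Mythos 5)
You have correctly recognized the crucial point: as stated, Zilber's Trichotomy Principle is \emph{false}, and the paper itself says so immediately after the statement, citing Hrushovski's ab initio construction \cite{hruNew} of non-locally-modular strongly minimal sets that interpret no infinite group. There is therefore no proof of this conjecture in the paper, nor could there be one; the paper states it only as the motivating heuristic that the subsequent restricted theorems make precise. Your proposal honestly flags this obstruction rather than papering over it, which is exactly right.

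Where your outline is most useful is as a description of the strategy that \emph{does} work in the restricted settings the thesis actually treats: produce a group configuration from non-local-modularity via Hrushovski's theorem, obtain a definable group $G$, then upgrade to a two-dimensional (field) configuration by exploiting composition and addition of curves through the origin. This is indeed the skeleton of the paper's argument: Proposition \ref{assmGraphpC} and the ``good family of curves'' machinery of Section \ref{goodFamiliesOfCurves} build the family of pseudo-endomorphisms, Lemma \ref{additionAndCompositionLemma} shows slopes add under $\oplus$ and multiply under $\circ$, and Theorem \ref{thmFieldGivenFamily} converts this into a field configuration. You are also right that the extra ingredient that makes the second step go through is precisely the ambient analytic/valuative structure of $\mathbb K$: continuity of roots (Fact \ref{continuityOfRootsTheoremf}) and the implicit function theorem (Fact \ref{implicitFunctionTheoremf}) are what force enough intersection points to exist, replacing the Zariski-geometry or o-minimal arguments in the other known cases. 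The one refinement worth noting relative to your sketch is that the paper does not literally build a ring of pseudo-endomorphisms and check the ring axioms; instead it exhibits a rank-$2$ group configuration directly from the slope relations and invokes Fact \ref{fieldConfig}, which sidesteps the closure issues you worry about in your second step.
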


In the conjecture, ``big enough'' will be read for us as ``morley rank $2$''. The family of lines in the plane is an example.

Ravinovich' result is a very particular case of this conjecture where $\mathcal D$ is some reduct of the full field structure on the affine line of an algebraically close field.

This conjecture was proved false by Hrushovski in \cite{hruNew}. He built a class of examples of strongly minimal sets that are not trivial or locally modular but does not interpret any field.  However, the principle of Zilber's trichotomy has still an important role in modern model theory.

In \cite{hrz} Hrushovski and Zilber defined (1-dimensional) Zariski geometries and proved that Zilber's trichotomy holds for them. This covers a vast class of examples, generalizing the algebraic case: if $k$ is an algebraically closed field and $N$ are the $k$-points of an algebraically curve (over $k$) then if $\mathcal N$  is the structure with universe $N$ and whose definable subsets of $N^k$ are all the $k$-constructible sets, then $\mathcal N$ is a Zariski Geometry and then it interprets a field.


There are other settings where the conjecture has been proved true without the use of Zariski Geometries but using intersection theory. That is the case of \cite{HS}. Here Hasson and Sustretov proved that if $D$ has as universe an algebraic curve over an algebraically closed field $k$ and all the definable sets on $D^k$ are definables in the field structure, then Zilber's trichotomy it is true for $D$. Note that it is a generalization of Ravinovich result.

In some other settings the conjecture has proved to be true by using intersection theory coming from continuous open functions. This is the case of \cite{HE}, here is proved that if $(D,+)$ is a definable group in $k$, an o-minimal expansion of a real closed field, and $\mathcal D=(D,+,\ldots)$ is a o-minimal structure that is a reduct of the structure induced by $k$, containing a big enough family of curves, then $\mathcal D$ interprets a field.

There is also cases where the conjecture was proved to be true using intersection theor coming from analytico functions. This is the case of \cite{KR}. Here it is proved that if $D$ has as universe a valued field $K$ with $\text{char} K=0$ and we assume that addition is definable and that all the definable sets of $D^k$ are definable in the valued field structure, then $D$ satisfies Zilber's trichotomy.

This is the setting in which we are interested. In the introduction of \cite{HS} is suggested that their methods should be suitable to be used for proving generalizations of \cite{KR}, for example proving the result for positive characteristic or get rid of the assumption that $+$ is definable on $D$.

In this thesis we work on both possible generalizations:

First we prove that results on \cite{KR} are true even in positive characteristic. This is Theorem \ref{propAdditive}.

Moreover we also prove that if $D$ is an expansion of the multiplicative group $(K\setminus 0,\cdot)$ then it interprets a field if it is not locally modular. 

We expect that our results can be used for proving Zilber's trichotomy for $D$. Assuming that $D$ is any definable subset of an algebraically close valued field $K$ whose Zariski closure is $1$-dimensional and whose definables sets are definable in the valued fields structure. This can be done finding a group interpretable in $\mathcal D$ as in Section \ref{findingAGroup} and then proving that such a group is locally isomorphic either to $(K,+)$ or to $(K\setminus 0,\cdot)$, and then we can use our result to conclude. 

Now we present the structure of the document:

In Chapter \ref{introductionAndPreliminaries} we present the basic preliminaries on model theory and valued fields that we will need. 

In Chapter \ref{someFacts} we prove some facts about definable groups contained in $K$. In Section \ref{goodFamiliesOfCurves} we introduce the notion of good families of curves for a definable group $G$ and prove Theorem \ref{thmFieldGivenFamily} that we will use for defining a field.

In Chapter \ref{additiveCase} we construct a good family of curves if $G$ is the additive group of $K$ so using results on Chapter \ref{someFacts} we prove Theorem \ref{propAdditive}.

In Chapter \ref{multiplicativeCase} we deal with the case in wich $D$ is an expansion of the multplicative group. In here we firs build an interpretable group that is locally isomorphic to $(K,+)$. Then we use results of Chapters \ref{someFacts} and \ref{additiveCase} for interpret a field. 

\section{Preliminaries on Model Theory}\label{preliminariesModelTheory}

Let $\mathcal L$ be a first order language and let $\mathcal P=(P,\ldots)$ be an infinite $\mathcal L$ structure. 

We adopt the usal definitios of $\mathcal P$-definable and $\mathcal P$-intepretable sets. See for example Section 1.3 of \cite{marker}.  Assume moreover that $\mathcal P$ is $\kappa$-saturated, for some big enough cardinal $\kappa$. In particular $\kappa>\omega$.

\begin{defi}
Given $D$ be an interpretable set on $\mathcal P$.

We say that $D$ is strongly minimal if $D$ is infinite and the only definable subsets of $D$ are the finite and the cofinite sets. 

We say that $\mathcal P$ is strongly minimal if $P$ is strongly minimal as a definable set on $\mathcal P$.

If $T$ is a theory we say that $T$ is strongly minimal if $\mathcal P$ is strongly minimal for all $\mathcal P\models T$
\end{defi}
We will use the following notion of dimension:

\begin{defi}
If $X\subseteq P^n$ is a definable set we say that $\RM_{\mathcal P}(X)\geq 0$ if $X$ is non empty and for an ordinal $\alpha$, $\RM_{\mathcal P}(X)\geq \alpha$ if there are $X_1,X_2\ldots$ infinitely many definable subsets of $X$ such that:
\begin{itemize}
    \item $\bigcup X_i=X$, 
    \item for all $i\neq j$, $X_i\cap X_j=\emptyset$  and
    \item for all $i$ and for all $\beta <\alpha$,  $\RM_{\mathcal P}(X_i)\geq \beta$..
\end{itemize}

We say that $\RM_{\mathcal P}(X)=\alpha$ if $\RM_{\mathcal P}(X)\geq \alpha$ and is not the case that $\RM_{\mathcal P}(X)\geq \alpha +1$. 

If there is some $\alpha<\kappa$ such that $\RM(X)=\alpha$, we say that $X$ has bounded morley rank and that $\alpha$ is the morley rank of $X$.

If $\RM(X)=n\in \omega$ then we say that $X$ has finite morley rank.

\end{defi}

The following is well known, a proof can be find in Lemma 6.2.7 of \cite{marker}.

\begin{fact}
If $X,Y$ are subsets of $P^n$, then 
\end{fact}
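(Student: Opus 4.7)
The statement is $\RM_{\mathcal P}(X \cup Y) = \max\bigl(\RM_{\mathcal P}(X), \RM_{\mathcal P}(Y)\bigr)$, matching Marker's Lemma 6.2.7. My plan is to prove the two inequalities separately, both by transfinite induction on Morley rank.

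For $\max(\RM(X),\RM(Y)) \leq \RM(X\cup Y)$ the key tool is monotonicity: if $A \subseteq B$ then $\RM(A) \leq \RM(B)$. I would establish this by transfinite induction on $\alpha$, showing that $\RM(A)\geq\alpha$ implies $\RM(B)\geq\alpha$; any infinite pairwise disjoint definable family inside $A$ already sits inside $B$ and witnesses the same rank there. Applied to the inclusions $X,Y\subseteq X\cup Y$, monotonicity yields this inequality immediately.

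For the non-trivial direction I would prove by transfinite induction on $\alpha$, uniformly over all interpretable sets $A,B$, the implication
\[
\RM(A\cup B)\geq\alpha \;\Longrightarrow\; \RM(A)\geq\alpha \ \text{ or }\ \RM(B)\geq\alpha.
\]
The base case $\alpha=0$ is just ``nonempty union implies one side is nonempty''. For a successor $\alpha=\gamma+1$, I would take an infinite disjoint witnessing family $(Z_i)$ in $A\cup B$ with $\RM(Z_i)\geq\gamma$, split $Z_i=(Z_i\cap A)\cup(Z_i\cap B)$, apply the inductive hypothesis at $\gamma$ to each $i$, and then use the infinite pigeonhole on the two sides $\{A,B\}$ to extract an infinite pairwise disjoint subfamily concentrated on a single side, witnessing rank $\geq\alpha$ there.

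The main obstacle is the limit step, where one must respect the paper's single-clause definition of $\RM\geq\alpha$ (an infinite disjoint family each of whose members has rank $\geq\beta$ for every $\beta<\alpha$). Given such a family $(Z_i)$ in $A\cup B$, for each fixed $i$ the inductive hypothesis applied at every $\beta<\alpha$ forces $\max(\RM(Z_i\cap A),\RM(Z_i\cap B))\geq\beta$; since this maximum is a single ordinal bounded below by every $\beta<\alpha$, it must in fact be $\geq\alpha$. Pigeonholing on the two sides then produces an infinite disjoint subfamily living entirely on one side whose members all have rank $\geq\alpha$, and in particular rank $\geq\beta$ for every $\beta<\alpha$, which is precisely what the definition demands for $\RM(A)\geq\alpha$ or $\RM(B)\geq\alpha$. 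Combining the two inequalities then gives the claimed equality.
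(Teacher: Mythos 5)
The paper's own text for this fact is truncated (it ends with ``then'' and no conclusion) and the paper offers no proof; the completed version appears a few lines later as Fact~\ref{factGoodDimension} and is attributed to Lemma~6.2.7 of \cite{marker}, so there is no in-paper argument to compare against. Your reconstruction of the intended statement as $\RM(X\cup Y)=\max(\RM(X),\RM(Y))$ is the natural one, and your proof plan is the standard transfinite-induction argument: monotonicity gives the easy inequality, and the hard inequality is obtained by showing $\RM(A\cup B)\geq\alpha$ forces $\RM(A)\geq\alpha$ or $\RM(B)\geq\alpha$, via pigeonhole at successors and a cofinality argument (together with downward closure of $\RM\geq\cdot$) at limits. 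This is essentially correct.

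One point worth flagging: the definition of $\RM\geq\alpha$ given in this paper is nonstandard in that it requires the witnessing family $(X_i)$ to \emph{cover} $X$ (a partition), not merely be a pairwise-disjoint family of subsets. Both your monotonicity step (``any infinite pairwise disjoint family inside $A$ already sits inside $B$'') and your pigeonhole step produce disjoint families that need not cover. The repair is routine — absorb the complement into a single piece of the family and appeal to monotonicity at lower ranks to see that this enlarged piece still has rank $\geq\beta$ for all $\beta<\alpha$ — but it is a genuine extra induction that the proposal does not explicitly account for. With that adjustment the argument goes through and matches what Marker's lemma asserts.
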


We omit the subscript if $\mathcal P$ is clear from the context.

\begin{defi}
If $p(x)\in S_n(A)$ we define 
$$\RM(p)=\min\{\RM(X):X \in p\}.$$

For $a\in D^n$ let  $\RM(a/p)=\RM(tp(a/A))$.
\end{defi}

The following is well known (and easy to prove) and can be found in Lemma 6.2.7 of \cite{marker}.

\begin{fact}\label{factGoodDimension}
If $\mathcal P=(P,\ldots)$ is any structure and $X,Y$ are  definable subsets of $P^n$, then:

\begin{enumerate}
    \item If $X\subseteq Y$ then $\RM(X)\leq\RM(Y)$.
    \item $\RM(X\cup Y)=\max(\RM(X),\RM(Y))$.
    \item If $X\neq \emptyset$ then $\RM(X)=0$ if and only if $X$ is finite.
\end{enumerate}
\end{fact}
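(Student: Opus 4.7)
The plan is to prove the three items in order, with the later items building on the earlier ones.

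For item (1), I would argue by transfinite induction on $\alpha$ that $\RM(X)\geq\alpha$ implies $\RM(Y)\geq\alpha$, whenever $X\subseteq Y$. The base case $\alpha=0$ just says that if $X$ is nonempty then so is $Y$. At the successor stage $\alpha=\beta+1$, pull back the infinitely many pairwise disjoint definable $X_i\subseteq X$ of rank $\geq\beta$ witnessing $\RM(X)\geq \alpha$: they are also pairwise disjoint definable subsets of $Y$, and the inductive hypothesis (applied to each inclusion $X_i\subseteq X_i$, trivially) gives $\RM(X_i)\geq\beta$ as witnesses in $Y$. Limit stages are immediate from the definition.

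For item (2), the $\geq$ direction follows by applying (1) to $X\subseteq X\cup Y$ and $Y\subseteq X\cup Y$. For the $\leq$ direction I would prove by transfinite induction on $\alpha$ that $\RM(X\cup Y)\geq \alpha$ implies $\max(\RM(X),\RM(Y))\geq\alpha$. The successor step is the heart of the argument: if $Z_1,Z_2,\ldots\subseteq X\cup Y$ are infinitely many pairwise disjoint definable sets of rank $\geq \beta$, then each $Z_i=(Z_i\cap X)\cup(Z_i\cap Y)$, and the inductive hypothesis yields $\max(\RM(Z_i\cap X),\RM(Z_i\cap Y))\geq\beta$. By pigeonhole, infinitely many of the $Z_i$ satisfy (say) $\RM(Z_i\cap X)\geq\beta$; since these are pairwise disjoint definable subsets of $X$, we conclude $\RM(X)\geq\beta+1$. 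The limit case is again immediate.

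For item (3), one direction is easy: a finite nonempty set has rank $0$ because it is nonempty but cannot be partitioned into infinitely many pairwise disjoint nonempty pieces. For the converse, if $X$ is infinite, by $\kappa$-saturation (with $\kappa>\omega$) I pick countably many distinct points $a_0,a_1,\ldots\in X$; the singletons $\{a_i\}$ are definable (with parameters), pairwise disjoint, nonempty subsets of $X$, each of rank $\geq 0$, witnessing $\RM(X)\geq 1$.

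The step I expect to require the most care is the successor case in (2): one must phrase the inductive hypothesis on a statement about unions of arbitrary pairs of definable sets (not just on $X$ and $Y$ themselves) so that it can be reapplied to each $Z_i$, and then invoke pigeonhole cleanly to extract an infinite subfamily sitting inside one of $X$ or $Y$.
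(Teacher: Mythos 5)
The paper does not actually give a proof of this fact; it simply records it and points to Lemma~6.2.7 of Marker. Your argument is the standard one (item (1) is essentially immediate from the definition since witnesses for $X$ serve as witnesses for $Y$; item (2) is exactly the classical pigeonhole argument; item (3) is the usual singletons argument), and it is correct for the standard definition of Morley rank as in Marker, so there is nothing to compare against.

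One small caution worth flagging: as written, the paper's definition of $\RM(X)\geq\alpha$ additionally demands that the witnessing family $X_1,X_2,\ldots$ \emph{cover} $X$ ($\bigcup X_i = X$). If one takes that clause literally, your item (3) has a gap: the singletons $\{a_i\}$ do not cover $X$, and $X\setminus\{a_1,a_2,\ldots\}$ need not be definable since it is the complement of an infinite union. The same issue would affect the witness-transfer in item (1). However, the covering clause is nonstandard and appears to be a slip in the paper --- Marker's definition, which the paper cites, has no such requirement --- so your proof should be read against the standard definition, under which everything you wrote is fine.
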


\begin{defi}
If $\RM(X)=\alpha$ then there is no a partition of $X$ in infinitelly many definable subsets of morley rank greater than $\alpha$. Therefore by compactness there is a maximal natural number $n$ such that there are $X_1,\ldots,X_n$ definable and disjoints subsets of $X$ covering $X$ with $\RM(X_i)=\alpha$.
We define the morley degree of $X$ as $\DM(X)=n$. 

We say that $X$ is sationary if $\DM(X)=1$.
\end{defi}

\begin{lemma}
A set $D$ is strongly minimal if and only if $\RM(D)=\DM(D)=1.$
\end{lemma}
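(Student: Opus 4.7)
The plan is to prove both directions by directly unpacking the definitions of strong minimality and Morley rank/degree, together with Fact \ref{factGoodDimension}.

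For the forward direction, assume $D$ is strongly minimal. First I would establish $\RM(D)\geq 1$: since $D$ is infinite and singletons are definable (with parameters), I can partition $D$ into the singletons $\{a_1\},\{a_2\},\ldots$ together with the cofinite remainder, giving infinitely many disjoint nonempty definable subsets, each of Morley rank $\geq 0$. Then I would argue $\RM(D)\not\geq 2$: if there were infinitely many disjoint definable subsets $X_1,X_2,\ldots$ of $D$ each of Morley rank $\geq 1$, then in particular each $X_i$ would be infinite by Fact \ref{factGoodDimension}(3); but strong minimality forces each $X_i$ to be finite or cofinite, and two disjoint cofinite subsets of $D$ cannot both exist, so at most one $X_i$ is infinite, contradiction. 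Hence $\RM(D)=1$. The same dichotomy shows $\DM(D)=1$: a partition $D=X_1\cup X_2$ into disjoint definable pieces with $\RM(X_1)=\RM(X_2)=1$ would make both pieces infinite, impossible in a strongly minimal set.

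For the backward direction, assume $\RM(D)=\DM(D)=1$, so in particular $D$ is infinite. Let $Y\subseteq D$ be an arbitrary definable subset; I want to show $Y$ is finite or cofinite. Write $D=Y\sqcup(D\setminus Y)$. By Fact \ref{factGoodDimension}(2), $\max(\RM(Y),\RM(D\setminus Y))=\RM(D)=1$. If both $\RM(Y)=1$ and $\RM(D\setminus Y)=1$, this exhibits a partition of $D$ into two disjoint definable pieces of full Morley rank, forcing $\DM(D)\geq 2$, a contradiction. Therefore one of $Y$, $D\setminus Y$ has Morley rank $<1$, hence is empty or of Morley rank $0$, i.e.\ finite by Fact \ref{factGoodDimension}(3). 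Thus $Y$ is finite or cofinite, and $D$ is strongly minimal.

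I do not expect any real obstacle: the argument is entirely a bookkeeping exercise with the definitions once Fact \ref{factGoodDimension} is available. The only subtle point is making sure that in the forward direction I have genuinely produced an infinite definable partition to witness $\RM(D)\geq 1$, which is why I use singletons plus their complement rather than trying to exhibit two infinite disjoint definable subsets (which of course do not exist in a strongly minimal set).
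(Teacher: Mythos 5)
Your proof is correct and follows essentially the same route as the paper's: the forward direction uses singletons to witness $\RM(D)\geq 1$ and the impossibility of two disjoint infinite definable subsets to bound $\RM$ and $\DM$ from above, and the backward direction derives $\DM(D)\geq 2$ from a definable subset that is neither finite nor cofinite. Your write-up is somewhat more explicit about why a rank-$\geq 1$ piece must be infinite (via Fact \ref{factGoodDimension}(3)), but the underlying argument is the same.
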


\begin{proof}
Suppose $D$ is strongly minimal, then as $D$ is infinite and $D=\cup_{d\in D}\{d\}$ one has that $\RM X\geq 1$. As each infinite definable subset of $D$ is cofinite it is not the case that there are two infinite and disjoints definable subsets of $D$. It shows that $\RM(X)=1$ and also that $\DM(X)=1$.

Now assume that $\RM X=\DM X=1$, and suppose by contradiction that there is an infinite definable set $Y\subseteq X$ such that $X\setminus Y$ is also infinite. Then as $X=Y\cup (X\setminus Y)$ one has that $\DM(X)\geq 2$, a contradiction.

\end{proof}

The following can be found in page 196 of \cite{marker2017strongly}.
\begin{fact}\label{factDimN}
If $\mathcal P=(P,\ldots)$ is strongly minimal then $\RM(P^n)=n$.
\end{fact}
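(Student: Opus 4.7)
My plan is to proceed by induction on $n$; the base case $n = 1$ is exactly the lemma just proven, which gives $\RM(P) = 1$ from strong minimality. So assume $\RM(P^{n-1}) = n-1$.

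For the lower bound $\RM(P^n) \geq n$, I would exhibit the required infinite disjoint family directly. Partition $P^n$ into the slices $\{a\} \times P^{n-1}$ for $a \in P$. These are pairwise disjoint, cover $P^n$, and each is in definable bijection with $P^{n-1}$ (via the second-coordinate projection, using the parameter $a$), hence has Morley rank $n-1$ by the inductive hypothesis. Since $P$ is infinite by strong minimality, this yields infinitely many pairwise disjoint definable pieces of rank $n-1$. Since $n-1 \geq \beta$ for every $\beta < n$, the definition of $\RM(P^n) \geq n$ is directly satisfied.

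For the upper bound $\RM(P^n) \leq n$, the natural approach is via fibers. I would prove the following auxiliary statement: for any definable $X \subseteq P^n$ and the projection $\pi : P^n \to P$ onto the first coordinate,
\[
\RM(X) \;\leq\; \RM(\pi(X)) + \max_{a \in \pi(X)} \RM\bigl(X \cap \pi^{-1}(a)\bigr).
\]
Applying this to $X = P^n$ gives $\RM(P^n) \leq \RM(P) + \RM(P^{n-1}) = 1 + (n-1) = n$ by the inductive hypothesis and the base case.

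The main obstacle is proving this additivity-style fiber lemma directly from the bare definition of Morley rank given in the text. I would argue by induction on $\RM(X)$: if $\RM(X) \geq \gamma + 1$, then any infinite family of disjoint definable subsets of $X$ of rank $\geq \gamma$ either concentrates cofinally many pieces inside a single fiber $\pi^{-1}(a)$ (forcing the fiber rank to exceed $\gamma$ by the inner induction) or spreads out to give infinitely many disjoint definable pieces of $\pi(X)$ of positive rank (forcing $\RM(\pi(X))$ up). A cleaner alternative, which is the route taken in Marker, is to first establish that a strongly minimal theory is totally transcendental, so that Morley rank is definable in families; the fiber lemma then reduces to the case of constant fiber rank and is routine. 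Either way, the substantive step is packaged entirely inside this sub-lemma, after which the proof of $\RM(P^n) = n$ is a one-line induction.
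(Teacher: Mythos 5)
The paper does not prove this fact; it cites page 196 of Marker, so there is no in-paper argument to compare against. Your lower-bound argument --- partitioning $P^n$ into the slices $\{a\}\times P^{n-1}$, each in definable bijection with $P^{n-1}$ and hence of rank $n-1$ by the inductive hypothesis --- is correct and is the standard witnessing family.

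The gap is in the upper bound. The dichotomy you propose to prove the fiber lemma $\RM(X)\leq\RM(\pi(X))+\max_a\RM\bigl(X\cap\pi^{-1}(a)\bigr)$ directly from the recursive definition of rank does not exhaust the cases. If $X_1,X_2,\ldots$ witness $\RM(X)\geq\gamma+1$, their projections $\pi(X_1),\pi(X_2),\ldots$ need not be pairwise disjoint --- and typically are not --- so the ``spreads out'' branch produces no disjoint family in $\pi(X)$. At the same time, each piece can meet every fiber, so no single fiber $\pi^{-1}(a)$ concentrates cofinally many of them and the ``concentrates'' branch also fails to fire. Already $X=P^2$ with the pieces $X_a=P\times\{a\}$ for $a\in P$ exhibits this: the pieces are pairwise disjoint of rank $1$, every projection equals $P$, and no fiber contains more than one point of any piece. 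A correct proof of that subadditivity inequality needs genuinely more input: either the fact that a strongly minimal theory is totally transcendental, so that Morley rank is definable in families and one can definably stratify $\pi(X)$ by fiber rank and reduce to constant-rank fibers; or the identification of Morley rank of a tuple with its dimension in the $\acl$-pregeometry on $P$, which is essentially Marker's route. You correctly flag the latter as the cleaner path, but the ``elementary'' alternative you sketch does not close on its own --- the full content of the statement is packed into that sub-lemma, and its proof is not routine from the bare definition of rank.
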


\begin{corollary}
If $\mathcal P=(P,\ldots)$  is strongly minimal and $X\subseteq P^n$ is $\mathcal{P}$-definable, then $X$ has finite Morley Rank.
\end{corollary}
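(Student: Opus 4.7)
The plan is to combine the two preceding facts directly. By Fact \ref{factDimN}, since $\mathcal{P}$ is strongly minimal, the ambient space $P^n$ has Morley rank exactly $n$, which is a natural number. Since $X \subseteq P^n$ is definable, the monotonicity of Morley rank stated in Fact \ref{factGoodDimension}(1) gives $\RM(X) \leq \RM(P^n) = n$.

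To make this rigorous, I would first note that $\RM(X)$ is defined as the least ordinal $\alpha$ such that $\RM(X) \geq \alpha$ but not $\RM(X) \geq \alpha+1$, provided such an $\alpha$ exists below $\kappa$; otherwise $X$ has unbounded Morley rank. The inequality $\RM(X) \leq n$ witnesses that such an $\alpha$ exists and is at most $n$, so in particular $\RM(X)$ is a finite ordinal, i.e.\ $X$ has finite Morley rank in the sense of the definition.

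There is essentially no obstacle here: the corollary is a one-line consequence of the two facts cited immediately before it, and the only subtlety is making sure that the monotonicity statement of Fact \ref{factGoodDimension}(1) is interpreted to include the case where the larger set has bounded (in fact finite) Morley rank, so that the smaller set inherits this boundedness. This is standard and follows from the inductive definition of $\RM(\cdot) \geq \alpha$, since any witnessing family for $\RM(X) \geq \alpha$ is also a witnessing family inside $P^n$.
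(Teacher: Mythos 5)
Your proof is correct and follows exactly the same route as the paper: Fact \ref{factDimN} gives $\RM(P^n)=n$, and Fact \ref{factGoodDimension}(1) then gives $\RM(X)\leq n$, hence finite. The extra paragraph about why monotonicity yields boundedness is a reasonable elaboration but not a different argument.
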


\begin{proof}
By Fact \ref{factDimN} one has that $\RM P^n=n$ and by clause 1 of Fact \ref{factGoodDimension} one conclude that $\RM X\leq n$, in particular $X$ has finite Morley Rank.
\end{proof}

\begin{defi}
Let $\mathcal P=(P,\ldots)$ be a strongly minimal structure and $X$ be a $\mathcal P$-interpretable set with parameters $A$. We say that a tuple $z=(z_1,\ldots,z_n)\in X^n$ is generic independent (over $A$) if $\RM(z/A)=n\RM(X)$ 
\end{defi}

We now define curves and families of curves. 

\begin{defi}
If $\mathcal P=(P,\ldots)$ is a strongly minimal structure, a plane curve $C$ (of $\mathcal P$) is a $\mathcal P$-definable and one dimensional subset of $P^2$. 

A definable family of plane curves is a $\mathcal M$-definable set $X\subseteq P^{2+n}$ such that for all $a$ in some $\mathcal P$-definable set $Q\subseteq P^n$ one has that $$X_a:=\{(x,y)\in P^2:(x,y,a)\in X\}$$ is a plane curve.

We usually write such a family as $(X_a)_{a\in Q}$.

\end{defi}

\begin{defi}
We say that a family of curves $(X_a)_{a\in Q}$ is almost disjoint if for any $b\in Q$ the set $\{a\in Q: |X_a\Delta X_b|<\infty \}$ is finite.
\end{defi}

From now when we say ``definable family of curves'' we mean ``definable and almost disjoint family of curves''.

Now we define the notion of trivial and locally modular strongly minimal structures.

\begin{defi}
If $\mathcal P$ is strongly minimal we say that $\mathcal P$ is trivial if for any definable set $A\subseteq P^n$ one has that 
\[
\acl(A)=\bigcup_{a\in A} \acl(a).
\]
\end{defi}

\begin{defi}
If $\mathcal P$ is strongly minimal we say that $\mathcal P$ is locally modular if for any $X,Y$ definable subsets of $P^n$, if $X=\acl(X)$ and $Y=\acl(Y)$ then one has that
\[
\RM(X\cup Y)=\RM X + \RM Y - \RM(X\cap Y).
\] 
\end{defi}

Zilbers Trichotomy states:

Let $\mathcal P$ be strongly minimal, then and assume that is no trivial and is no locally modular, then there is an infinite field $k$ interpretable in $\mathcal P$.

In all of its generality it was proved false by Hrushovski on \cite{hruNew}. But we can re state it relative to restricted setting:

Let $T$ be some complete theory 

\begin{conjecture}\label{zilbTric}(Zilber's trichotomy principle relative to $T$)

Let $\mathcal N=(N,\ldots)$ be any model of $T$ and let $P$ be some $\mathcal N$-interpretable set. Let $\mathcal P=(P,\ldots)$ be some $\mathcal L'$-structure over $P$ (for some language $\mathcal L'$) such that any $\mathcal P$-definable set is also $\mathcal N$-definable. Then if $\mathcal P$ is non locally modular, there is an infinite field interpretable in $\mathcal P$.
 
\end{conjecture}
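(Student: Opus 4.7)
The plan is to reduce this conjecture (in the case $T = \text{ACVF}$) to the two main theorems of the thesis, namely Theorem \ref{propAdditive} and its multiplicative counterpart from Chapter \ref{multiplicativeCase}. Given a non-locally-modular strongly minimal $\mathcal P$ interpretable in a model $\mathcal N$ of ACVF, the task is to produce a $\mathcal P$-interpretable group $G$ of Morley rank one that is locally isomorphic to either $(K,+)$ or $(K\setminus 0,\cdot)$. Once this is in hand, the conclusion follows immediately from the theorems of Chapters \ref{additiveCase} and \ref{multiplicativeCase}, since the hypotheses of those theorems are then met.

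First I would use the failure of local modularity to extract a group. By Hrushovski's group configuration theorem -- which is the mechanism sketched by the author for Section \ref{findingAGroup} -- a non-locally-modular strongly minimal structure carries a rank-two $\mathcal P$-definable family of plane curves yielding a generic group configuration; from this configuration one extracts a $\mathcal P$-interpretable connected rank-one group $G$ together with a $\mathcal P$-interpretable faithful action on a strongly minimal set. This step is purely model-theoretic and does not depend on the ambient valued field.

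Second, and this is the heart of the proof, I would \emph{identify} $G$ up to local isomorphism with one of the two target groups. Because every $\mathcal P$-definable set is $\mathcal N$-definable, the group $G$ is interpretable in the valued field $\mathbb K$, and one would need a classification of one-dimensional interpretable groups in ACVF. After passing to a connected component of finite index and quotienting by a finite kernel, such a group should embed into a one-dimensional algebraic group over $K$; over an algebraically closed field the only options are $\mathbb G_a$, $\mathbb G_m$, and elliptic curves. The elliptic case can be folded into the additive one via the formal group law at the identity, which provides a definable local isomorphism with $(K,+)$. One must simultaneously exclude groups living on sorts built from the residue field or the value group, since these sorts cannot support a non-locally-modular strongly minimal structure (the value group is o-minimal and the residue field is itself algebraically closed but essentially orthogonal to the generic type in the relevant sense).

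The main obstacle, as expected, lies in step two. Group configuration delivers $G$ only as an abstract interpretable object in $\mathcal P$, and one must then control its interaction with the imaginary sorts of ACVF and with the valuation topology in order to pin down the algebraic structure of a one-dimensional interpretable group. Ruling out mixed-sort phenomena and establishing the desired local algebraicity is the technically delicate point; once that is done, the remaining work is precisely the content of the thesis's main theorems, so the real novelty of a full proof of Conjecture \ref{zilbTric} for $T = \text{ACVF}$ would reside in this identification step.
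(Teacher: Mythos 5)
You should note first that the paper does \emph{not} prove Conjecture \ref{zilbTric}; it is stated as a conjecture precisely because only instances of it are established. The thesis proves the cases where $\mathcal P$ expands a $\mathbb K$-definable group locally isomorphic to $(K,+)$ or to $(K\setminus 0,\cdot)$ (Theorems \ref{propAdditive} and \ref{thmMultiplicative}), and the final chapter offers only an \emph{outline} of how the general one-dimensional case might go. So your ``proof'' is correctly pitched as a strategy rather than a completed argument, and in broad shape it matches the paper's outline: extract a group, identify it, invoke the two main theorems.

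Where you diverge from the paper's intended strategy is in the identification step, and the divergence matters. You propose to first produce $G$ as an abstract $\mathcal P$-interpretable rank-one group via the group configuration theorem and \emph{then} classify it among one-dimensional interpretable groups in ACVF — which forces you to confront elliptic curves, the residue field sort, the value group sort, and mixed-sort interpretable groups, none of which the thesis addresses. The paper's outlined route avoids this classification problem entirely: following Section \ref{findingAGroup} (adapted to drop the ambient group structure, as in Theorem 4.25 of \cite{HS}), the group configuration is manufactured directly from the Taylor coefficients of a definable family of curves, and the dichotomy — first derivative varies infinitely vs.\ is eventually constant so a higher coefficient varies — tags the configuration from the outset as interalgebraic with a configuration of $(K,+)$ or of $(M,\cdot)$. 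The identification with one of the two target groups is thus built into the construction rather than established afterwards by a classification theorem. Theorem \ref{confiGroupIso} then converts this interalgebraicity into the required definable local isomorphism. Your step two, as written, is both harder than necessary and misaligned with the machinery the thesis actually develops; in particular the paper never needs to exclude elliptic curves or mixed sorts because its curve-based construction never produces them. The honest assessment is that both routes leave the same genuine gap — carrying out the \cite{HS}-style construction without a definable group on $P$ — but the paper's version of the gap is considerably narrower than the one your approach opens up.
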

 
There are several instances of this conjecture that has been proved true, more relevant for us are:

\begin{fact} (Theorem 4.3.3 of \cite{HS})
If $T$ is the theory of algebraically closed fields of a fixed characteristic (as defined in Section \ref{preliminariesOnValuedFields}) then Conjecture \ref{zilbTric} is true if we assume that $P$ is an algebraic curve. 
\end{fact}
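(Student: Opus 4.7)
The plan is to follow the general template for proving Zilber's trichotomy in algebraic-geometric settings: extract a rich family of plane curves from the failure of local modularity, use classical intersection theory on algebraic surfaces to assemble a group configuration, and finally upgrade the resulting group to a field.

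First I would invoke the standard dichotomy that a strongly minimal non-locally-modular structure admits a definable almost disjoint family of plane curves $(C_t)_{t\in T}\subseteq P^2$ with $\RM(T)\geq 2$. After passing to a generic parameter and to a canonical subfamily, one may assume $\RM(T)=2$. Since every $\mathcal{P}$-definable set is constructible in the ambient algebraically closed field $k$, the Zariski closures of the curves $C_t$ are honest algebraic curves in the surface $C\times C$, where $C$ is the algebraic curve whose $k$-points form $P$. Thus the full machinery of intersection numbers of curves on smooth projective surfaces is available, after resolving any singularities of $C\times C$ if needed.

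Next, following the Rabinovich-style recipe, one uses three generic curves from the family to define a composition of (germs of) curves at a generic point: given generic $C_s$ and $C_t$, form their ``product'' through a generic auxiliary point by tracking intersection points. The existence of identities and inverses and, crucially, the associativity of this partial operation should be visible from intersection-theoretic identities on $C\times C$, for instance by noting that certain intersection divisors coincide generically in the parameter. This yields a partial operation satisfying the hypotheses of Hrushovski's group configuration theorem, producing an interpretable strongly minimal group $G$ in $\mathcal{P}$. A transverse family of curves then encodes an automorphism action of a second group on $G$, out of which one builds a ring and then extracts an infinite field.

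The main obstacle is the intersection-theoretic verification that the composition law is associative modulo a set of smaller Morley rank. In Rabinovich's setting this is essentially immediate by linearity; on an arbitrary algebraic curve $C$ one has neither translation symmetry nor global coordinates, so associativity must be checked via a divisor-theoretic calculation on $C\times C$. Controlling how the relevant intersection divisors vary in the family $(C_t)_{t\in T}$ requires Riemann--Roch-type input, and handling positive characteristic adds the further subtlety of inseparability phenomena for the projections coming from the curves. This is the technical heart of the Hasson--Sustretov argument, and it is what one has to replicate to pass from the case $P=\mathbb{A}^1$ (Rabinovich) to a general algebraic curve.
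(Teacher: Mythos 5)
This statement is a \emph{cited} result in the paper, labeled a Fact and attributed to Theorem~4.3.3 of \cite{HS}; the paper gives no proof of it, so there is no internal argument for your proposal to be measured against. What you have written is a high-level narrative of how one might try to prove a Zilber-trichotomy theorem for reducts of algebraic curves, not a proof, and at its level of detail it cannot be checked. Several of the steps are stated as goals rather than arguments: ``the associativity \dots should be visible from intersection-theoretic identities,'' ``requires Riemann--Roch-type input,'' and so on. These are precisely the hard parts, and waving at them does not close the gap.

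Moreover, as a description of the actual Hasson--Sustretov strategy, your sketch is somewhat off. Their argument does not proceed by defining a composition law on germs of curves and then verifying associativity via divisor identities; rather, it constructs a \emph{group configuration} directly from intersection and tangency data on the family of plane curves (tangency being controlled by intersection multiplicity of the Zariski closures in $C\times C$), and then invokes Hrushovski's group configuration theorem (essentially Facts~\ref{groupConfig} and~\ref{fieldConfig} of this paper) to produce first a group and then a field. The positive-characteristic and inseparability issues you mention are indeed present, and in \cite{HS} they are handled via a purely-inseparable reduction, not via resolution of singularities of $C\times C$ together with Riemann--Roch. So even granting the heuristic level, the route you sketch is closer to a Rabinovich-style composition argument than to what \cite{HS} actually does, and the latter is what the paper is citing. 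If your goal is to understand why this Fact holds, the concrete thing to internalize is the group-configuration-from-tangency construction, not an abstract associativity verification.
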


Recently Castle proved in \cite{CAS} the following:

\begin{fact}
If $\mathcal N$ is an algebraically closed field of characteristic zero, then, Conjecture \ref{zilbTric} is true.
\end{fact}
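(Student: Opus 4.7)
The plan is to reduce Castle's statement to the additive and multiplicative theorems already proved in this thesis, by first constructing an interpretable one-dimensional group inside $\mathcal P$. Since $\mathcal P$ is strongly minimal and not locally modular, a standard consequence (of the kind used in the construction of good families in Section \ref{goodFamiliesOfCurves}) produces a faithful two-dimensional almost-disjoint family of plane curves $(X_a)_{a\in Q}$ with a generic curve through any generic pair in $P^2$. I would take this as the starting point and argue generically, shrinking $P$ and $Q$ to $\mathcal P$-definable open subsets whenever convenient.

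Next I would use that every $\mathcal P$-definable set is $\mathcal N$-definable and that $\mathcal N \models \mathrm{ACF}_0$ eliminates imaginaries: this realises $P$ as a constructible subset of some $K^n$, whose Zariski closure, after removing a lower-dimensional piece, is an algebraic curve. In characteristic zero one has resolution of singularities and can pass to a smooth projective model $\overline C$, viewing the Zariski closures $\overline{X_a}$ as divisors on the smooth projective surface $\overline C \times \overline C$. On such a surface the intersection numbers $\overline{X_a}\cdot \overline{X_b}$ are uniformly bounded by the bidegrees of the family, which are generically constant in $a$; this is the ingredient that the Hasson--Sustretov approach in \cite{HS} turns into a quantitative model-theoretic statement about the generic number of common points of two curves in the family.

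From here I would follow the strategy of \cite{HS} and \cite{HE}: combine the intersection-theoretic count with almost-disjointness of the family to set up a Hrushovski group configuration inside $\mathcal P$, yielding an interpretable connected group $G$ of Morley rank one. Since $G$ is also $\mathcal N$-interpretable, the classification of connected one-dimensional algebraic groups over an algebraically closed field of characteristic zero forces $G$ to be locally isomorphic to $(K,+)$, to $(K\setminus\{0\},\cdot)$, or to an elliptic curve $E(K)$; in the last case the formal group of $E$ at the origin reduces the analysis locally to the additive case. In all three cases Theorem \ref{propAdditive} or its multiplicative analog from Chapter \ref{multiplicativeCase} finishes the job.

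The main obstacle, in my view, is the intersection-theoretic step in the middle paragraph. The closures $\overline{X_a}$ can acquire components at infinity or degenerate in fibres above a subvariety of $Q$, and one must show that the intersection numbers computed on $\overline C \times \overline C$ genuinely count the generic number of common points of $X_a$ and $X_b$ \emph{inside} $P\times P$ itself, rather than being inflated by contributions from the boundary. This is precisely where characteristic zero enters in an essential way, through both resolution of singularities and the absence of inseparability in the ramification analysis. Once this counting is in place and is shown to be definable and constant on a nonempty open subset of $Q\times Q$, the group-configuration and field-building steps become relatively standard applications of the machinery already developed in Chapters \ref{someFacts}--\ref{multiplicativeCase}.
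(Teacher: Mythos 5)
This statement is cited in the paper as an external result of Castle~\cite{CAS}; the thesis offers no proof of it, so there is no internal argument to compare against. Your sketch is therefore to be judged on its own merits, and as it stands it has two genuine gaps.

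First, the reduction to Chapters~\ref{additiveCase} and~\ref{multiplicativeCase} does not cover the case you mention in passing: when the interpretable rank-one group $G$ is (locally isomorphic to) an elliptic curve $E(K)$. Theorem~\ref{propAdditive} requires $G$ to be a $\mathbb K$-definable subgroup of $(K,+)$, and Theorem~\ref{thmMultiplicative} requires a group locally isomorphic to $(K\setminus\{0\},\cdot)$; neither applies to an elliptic curve, and the thesis proves nothing about that case. Your remark that ``the formal group of $E$ at the origin reduces the analysis locally to the additive case'' is not an argument: the formal group being additive-type in characteristic zero does not by itself produce a $\mathbb K$-definable local isomorphism to $(K,+)$ (the formal logarithm is not definable), and it is precisely a definable local isomorphism that Theorem~\ref{confiGroupIso} and the subsequent machinery require. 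This is not a technicality you can wave away --- it is one of the reasons the problem in full generality was open and is the substance of what a proof must supply.

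Second, and as you yourself flag, the intersection-theoretic count in the middle is the hard part, and your proposal identifies the difficulty (contributions from the boundary of $\overline{X_a}$ on the compactification, degeneration in fibres over $Q$) without closing it. ``Bidegrees generically constant'' controls the total intersection number on $\overline C\times\overline C$ but does not on its own say that the affine intersections inside $P\times P$ are what is being counted; eliminating the boundary contribution is the content of the harder lemmas of \cite{HS}, and in the two-dimensional family setting needed here it requires genuine new work. Until that is supplied the group-configuration step does not get off the ground. Finally, note that the thesis's tools all live over ACVF; to invoke them you would need to expand your model of $\mathrm{ACF}_0$ to a model of ACVF by choosing a (non-definable) valuation, and then verify that strong minimality and non-local-modularity of $\mathcal P$ are preserved under the expansion. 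This step is legitimate and does work, but it must be stated. For what it is worth, Castle's actual proof does not go through a valued-field reduction at all; it is a direct ACF argument, so even if you patched the gaps above you would have a genuinely different (and currently incomplete) route rather than a reconstruction of the cited proof.
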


The following is also true:

\begin{fact} (Theorem  3.17  on \cite{KR})
If $T$ is the theory of algebraically closed valued field of characteristic zero (as defined in Section \ref{preliminariesOnValuedFields}) then Conjecture \ref{zilbTric} is true if we assume that $\mathcal N=(N,+,\cdot,v,\Gamma)$, $ P=N$ and addition is definable in $\mathcal P$.
\end{fact}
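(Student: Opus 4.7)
The plan is to adapt the Rabinovich strategy for ACF to the valued-field setting, using the valuation to define tangent directions in place of the classical algebraic machinery. Non-local modularity of $\mathcal P$ yields a rank-$2$ almost-faithful definable family $(C_a)_{a \in Q}$ of plane curves in $P^2$, and since $+$ is $\mathcal P$-definable, $(P,+)$ is a strongly minimal definable subgroup of $(N,+)$; in characteristic $0$ this forces $(P,+)$ to be, after possibly replacing $P$ by a definable quotient, isomorphic to $(N,+)$. Using the additive translation action on $P^2$, we may normalize so that a generic member of the family passes through the origin $(0,0)$, and the subfamily $Q_0 \subseteq Q$ of curves through the origin still has rank $1$.

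First I would build a definable \emph{slope map} $\sigma \colon Q_0 \to \mathbb P^1$ sending a curve through the origin to its tangent direction there: if $(x,y)$ is a generic point of $C_a$ close to $(0,0)$ in the valuation topology, then $\sigma(C_a)$ is essentially $\mathrm{res}(y/x)$ (with the obvious swap of coordinates when $v(y) < v(x)$). The rank-$2$ hypothesis, together with a degeneracy-excluding argument ruling out the possibility that generic curves through the origin all share a common tangent, forces $\sigma$ to be generically finite-to-one onto an infinite definable set. Once this is in place, composition of germs at the origin (viewing curves as multivalued functions via one of the coordinate projections) equips the slope set with a group-configuration-like incidence, and combined with the inherited additive structure this produces, by a Hrushovski-style reconstruction, an interpretable infinite field.

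The main obstacle is that definable sets in ACVF are \emph{not} Zariski-constructible in general, so classical intersection theory cannot be invoked directly to ensure that generic members of the family meet transversally or that the slope map is well defined. One therefore has to replace the Zariski-geometry toolkit of \cite{hrz} with ACVF-specific machinery such as $C$-minimality of the RV-reduct, elimination in the geometric sorts, and a Robinson-style analysis of definable curves via one-variable analytic parametrizations, to guarantee that generic curves through the origin admit a well-defined tangent direction and that composition of germs remains $\mathcal P$-definable. Characteristic $0$ enters crucially at this step to rule out inseparability phenomena, so that a generic definable curve has a well-defined first-order ``derivative'' at a generic point; this is precisely the issue that fails in positive characteristic and that the author must bypass by a new argument to prove Theorem \ref{propAdditive}.
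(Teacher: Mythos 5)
This statement is quoted verbatim from the reference \cite{KR} (it is Theorem 3.17 there), so the paper does not reprove it; what the paper \emph{does} prove is the generalization to arbitrary characteristic, Theorem \ref{propAdditive}, whose proof follows the architecture of \cite{KR} and \cite{HS}. So the fair comparison is between your sketch and the strategy developed in Chapters \ref{someFacts}--\ref{additiveCase}.

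Your high-level outline is in the right spirit: reduce to a rank-$2$ family of plane curves through a fixed point, attach to each curve a tangent datum, and recover a field configuration from the incidence structure. The paper (following \cite{KR}) does exactly this, but with one decisive difference that makes your version break. You propose to take the slope to be $\mathrm{res}(y/x) \in \mathbb P^1(k)$, i.e.\ a point of the residue field. The paper instead takes the slope to be the genuine analytic derivative $H_x(e,s) \in K$ of the local analytic parametrization of the curve (Proposition \ref{assmGraphpC}, Fact \ref{derivativeDefinible}, and Definition \ref{defGoodFamily}, clause 5). These are not the same: if two curves have analytic slopes $m_1 \neq m_2$ with $v(m_1-m_2)>0$, your residue map identifies them, so $\sigma$ is not finite-to-one and the degeneracy-excluding argument you invoke cannot run. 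The paper avoids this by never passing to the residue field; it works with the $\mathbb K$-definable family of actual derivatives $\{H_x(e,s) : s \in U\} \subseteq K$, uses Fact \ref{QEf} to see that this set contains an open ball, and then builds the field configuration $(\alpha,\beta,\gamma,p,q,r)$ from elements of that ball (Lemmas \ref{tupla5dim} and \ref{defiDerivadaConf}). The algebraic-dependence relations of the configuration are then verified by an explicit intersection count on the auxiliary family $Z_\delta$ (Equation (\ref{equationDefZ1})), using continuity of roots (Fact \ref{continuityOfRootsTheoremf}) rather than a ``composition of germs'' heuristic.

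A second gap: your sketch leaves the ``Hrushovski-style reconstruction'' step as a black box. The paper needs to prove that the configuration is definable and that the incidence relations hold \emph{in the reduct} $\mathcal G$, not just in $\mathbb K$; this is precisely the role of Claims \ref{claimIntSube} and \ref{claimNoPierdoCeros}, which show that the multiplicity drop at the special parameter $q_1$ is detected by counting $\mathcal G$-definable intersection points. Without an argument of that form you cannot conclude that the dependences hold in $\mathcal G$. Finally, your diagnosis of what goes wrong in positive characteristic is roughly right (inseparability destroys the naive derivative), but the fix the paper gives is not a ``new argument to bypass'' it so much as a systematic bookkeeping of $\mathrm{Fr}^{-n}$-twists in Proposition \ref{assmGraphpC} and Lemma \ref{lemmaN}, which would also have to appear in any honest completion of your sketch.
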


We will use techniques of both, \cite{HS} and \cite{KR} for almost all of our work.

\section{Group Configurations}

In this section we introduce the main tools in order to interpret groups and fields in strongly minimal structures. 

From now we fix $\mathcal N=(N,\ldots)$ an strongly minimal structure.

\begin{defi}
A $d$-dimensional group configuration for $\mathcal N$ over a set of parameters $A$ is a $6$-tuple  $\mathfrak g=(a_1,a_2,a_3,b_1,b_2,b_3)$ where each $b_i$ and $a_i$ are tuples of elements of $N$ such that:

\begin{itemize}
    \item $\RM \mathfrak g= 3d+3$
    \item $\RM(\alpha,\beta/A)=\RM(\alpha/A)+\RM(\beta/A)$ for all $\alpha\neq \beta\in \mathfrak g$,
    \item  $\RM(b_i/A)=d$ for  $i=1,2,3$, 
    \item  $\RM(a_i/A)=1$ for $i=1,2,3$, 
    \item  $\RM(b_1,b_2,b_3/A)=3d$, 
    \item $\RM(b_1, a_2, a_3)=\RM(b_2,a_1,a_3)=\RM(b_3,a_1,a_2)=d+1$.
\end{itemize}
\end{defi}

\begin{defi}
If $G$ is an interpretable group of dimension $d$, a group configuration of $G$ is 
$$\mathfrak g_G=(a,b\cdot a, c\cdot b\cdot a ,b,c,cb)$$ for some choice of $a,b,c$ generic independent elements of $G$
\end{defi}

\begin{defi}
We say that $\mathfrak g=(a_1,a_2,a_3,b_1,b_2,b_3)$ is a reduced group configuration for $\mathcal N$ if it is a group configuration and in addition if $a'_i\in \acl(a_i)$ are such $\mathfrak g'=(a'_1,a'_2,a'_3,b_1,b_2,b_3)$ is still a group configuration, then $a_i\in \acl(a'_i)$  
\end{defi}

If $\mathfrak g_1$ is another group configuration we say that $\mathfrak g$ and $\mathfrak g_1$ are interalgebraic if the correspondent coordinates satisfies $\acl(a)=\acl(a')$.
The following is due to Hrushovski (\cite{hrushovski1986contributions}). The precise statement we need can be find in Facts 4.4 and  4.6 of \cite{HS}.

\begin{fact}\label{groupConfig}

Let $\mathcal N$ be an strongly minimal structure, then if $\mathfrak g$ is a group configuration for $\mathcal N$ (over some set of parameters), there is a minimal group $G$, an strongly minimal set $X$ and a faithful action of $G$ on $X$ all of the data interpretable in $N$.

In addition $\mathfrak g$ is reduced, then a generic group configuration of $G$ is interalgebraic with $\mathfrak g$. In particular $\RM G=d$.

\end{fact}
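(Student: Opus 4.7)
The plan is to interpret each $b_i$ as the canonical parameter of a germ of a plane curve (a finite-to-finite correspondence) through the relevant generic $a$'s, assemble these germs into a generic group chunk in the sense of Weil and Hrushovski, and then invoke the group chunk theorem to obtain an interpretable group $G$ of Morley rank $d$ acting faithfully on the locus of the $a_i$. All conclusions are up to interalgebraicity, so I would work in a sufficiently saturated model and freely replace each tuple by any interalgebraic one.

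First I would normalize so that each $b_i$ is a canonical parameter of the stationary type $\tp(a_j, a_k / b_i)$, where $\{i,j,k\} = \{1,2,3\}$; this preserves all rank data. The hypothesis $\RM(b_i, a_j, a_k) = d+1$ combined with $\RM(a_j) = \RM(a_k) = 1$ forces the curve $C_i$ named by $b_i$ to be a finite-to-finite correspondence between its two coordinates. Think of $b_3$ as a germ $f_3 : a_1 \rightsquigarrow a_2$, $b_1$ as $f_1 : a_2 \rightsquigarrow a_3$, and $b_2$ as $f_2 : a_1 \rightsquigarrow a_3$. The equalities $\RM(b_1, b_2, b_3) = 3d$ and $\RM(b_i, a_j, a_k) = d+1$ together force $f_2 = f_1 \circ f_3$ as germs at $a_1$, so the three $b_i$'s witness a ternary relation $b_2 = b_1 \ast b_3$ on the family of such germs.

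Next I would verify that $\ast$ is generically associative and defined on a set of full Morley rank: for independent generic germs $\beta, \beta', \beta''$, both $(\beta \ast \beta') \ast \beta''$ and $\beta \ast (\beta' \ast \beta'')$ are defined and coincide on a cofinite set. This is exactly the data of a generic group chunk. The Weil--Hrushovski group chunk theorem, valid in theories of finite Morley rank, then extends this chunk uniquely to an interpretable group $G$ of Morley rank $d$ whose generic type is interalgebraic with any $b_i$. The faithful action on a strongly minimal set $X$ containing the locus of $a_1$ is produced along the way, since the germs $f_i$ were constructed as (multivalued) maps on points.

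The main technical obstacle will be the careful bookkeeping of generic identities up to finite correspondence: one must track the finite-to-finite nature of each curve under composition and check that the locus where composition fails or multiplicity jumps has Morley rank strictly less than the total, so that the group chunk hypotheses apply. For the reduced addendum, if some nontrivial element of $G$ acted trivially, one could replace each $a_i$ by a proper algebraic image and still obtain a group configuration, contradicting reducedness; faithfulness then yields interalgebraicity of a generic group configuration of $G$ with $\mathfrak g$, and in particular $\RM G = d$.
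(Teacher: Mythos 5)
The paper does not prove this statement; it is cited directly to Hrushovski's thesis, with the precise form the author needs pointed to Facts 4.4 and 4.6 of \cite{HS}. So there is no in-paper proof to compare against, and any evaluation of your sketch has to be on its own terms against the known proof of the group configuration theorem.

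Your outline correctly identifies the standard strategy: replace each $b_i$ by a canonical parameter of a stationary rank-one-by-rank-one type, read the $b_i$'s as germs of finite-to-finite correspondences between the strongly minimal loci of the $a_j$'s, observe that the incidence constraints of the configuration force the germ indexed by $b_2$ to be interalgebraic with the composition of those indexed by $b_1$ and $b_3$, and then feed this into a Weil--Hrushovski group chunk argument. This is indeed the skeleton of Hrushovski's proof, and it is good that you flagged the bookkeeping of composition of multivalued germs as the main technical burden.

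Two points are imprecise enough to matter. First, the relation you write as $f_2 = f_1 \circ f_3$ does not hold as an equality of germs; what the rank hypotheses give you is that the germ $f_1 \circ f_3$ and the germ named by $b_2$ are \emph{interalgebraic} over the relevant parameters, and the composition law $\ast$ is only well-defined on equivalence classes of germs modulo interalgebraicity (equivalently on the group of germs of invertible finite correspondences). Passing from this equivalence-class composition to an honest generic group chunk is a substantive step, not bookkeeping, and is where much of the work in Hrushovski's argument lives. Second, the reduced addendum is not really about faithfulness of the action. The construction produces a group $G$ whose generic group configuration $\mathfrak{g}_G$ has its $b$-coordinates interalgebraic with $b_1,b_2,b_3$ automatically; what may fail without reducedness is that the $a$-coordinates of $\mathfrak{g}_G$ sit in $\acl(a_i)$ without the $a_i$ being recoverable from them. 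Reducedness is exactly the hypothesis that closes this loop, giving $\acl(a_i) = \acl(a_i')$ for the new $a_i'$ and hence full interalgebraicity of $\mathfrak{g}_G$ with $\mathfrak{g}$; the rank computation $\RM(G) = d$ then falls out. Your argument via a hypothetical kernel element does not obviously reach this conclusion.
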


The following is also due to Hrushovski \cite{hrushovski1986contributions}.

\begin{fact}\label{fieldConfig}
Let $G$ be a group interpretable in $\mathcal N$ acting transitively and faithfully on a strongly minimal set $X$. Assume $\RM(G)=2$ then there is a field structure definable in $X$ and $G$ is isomorphic to the semidirect product of the multiplicative and the additive group of such a field.\\

In particular if $\mathfrak g$ is a group configuration of dimension $2$ then $\mathcal N$ interprets a field.
\end{fact}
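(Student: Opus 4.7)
The plan is to follow the classical Hrushovski recovery of a field from a rank-$2$ group acting on a strongly minimal set, namely to exhibit $G$ as an affine group $N\rtimes H$ where $N$ will play the role of the additive group and $H=G_x$ the role of the multiplicative group of a field structure on $X$. First I would pass to the connected component of $G$ (which has the same Morley rank) and fix a generic point $x\in X$. Writing $H=G_x$, the additivity of rank in a transitive action gives $\RM(H)=\RM(G)-\RM(X)=1$. I would then argue that the action of $G$ on $X$ is generically $2$-transitive: the orbit of a generic pair has rank $\le 2=\RM(X^2)$ and the fibre over the first coordinate is the $H$-orbit of the second, which is cofinite since $H$ is strongly minimal acting faithfully on $X\setminus\{x\}$.

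Next I would isolate a normal rank-$1$ connected subgroup $N\triangleleft G$ acting regularly on $X$. The argument uses that for a generic $g$, the intersection $H\cap H^g$ fixes two generic points and must therefore be finite by generic $2$-transitivity, so the connected components of conjugates of $H$ fill out a second rank-$1$ subgroup which, by a standard connected-component argument, is normal. Since every connected group of Morley rank $1$ is abelian, $N$ is abelian; and because it meets $H$ trivially (in the connected sense) and has rank $1$, it acts freely on $X$, hence regularly by transitivity.

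Having $N$ regular on $X$, I fix $0\in X$ and identify $X$ with $N$ by $n\mapsto n\cdot 0$, transferring the abelian group structure to obtain $(X,+)$ with neutral element $0$. Conjugation by $H$ on $N$ gives an action of $H$ on $(X,+)$ by group automorphisms. I would then fix $1\in X\setminus\{0\}$ and show that the orbit map $H\to X\setminus\{0\}$, $h\mapsto h\cdot 1$, is a bijection: it is generically surjective because $H$ acts transitively on the rank-$1$ set $X\setminus\{x\}$, and it is injective because the stabiliser in $H$ of a nonzero point is a definable subgroup of $H$ of rank $0$, hence trivial after passing to the connected component. Transporting the group law of $H$ along this bijection defines a multiplication on $X\setminus\{0\}$, extended by $0\cdot a=a\cdot 0=0$. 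Distributivity $c\cdot(a+b)=c\cdot a+c\cdot b$ is automatic because $H$ acts by additive automorphisms, and commutativity of $\cdot$ follows because the connected rank-$1$ group $H$ is abelian. Thus $(X,+,\cdot)$ is a field and $G\cong N\rtimes H$ is its affine group.

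The main obstacle is the middle step: producing the normal abelian subgroup $N$ and verifying that $H$ acts freely on $N\setminus\{e\}$ (equivalently, that $G$ is \emph{sharply} generically $2$-transitive on $X$). Everything else is linear algebra disguised as group actions; this step is where the rank equality $\RM(G)=2$ is essential, as it forces any would-be stabiliser of two generic points to be a rank-$0$, hence finite, subgroup of the connected rank-$1$ group $H$. The ``in particular'' clause is then immediate: given a $2$-dimensional group configuration $\mathfrak g$, Fact \ref{groupConfig} produces an interpretable rank-$2$ group $G$ acting faithfully on a strongly minimal $X$, and the main statement of the fact yields the field.
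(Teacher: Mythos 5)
The paper does not prove this statement: it is quoted as a \emph{fact} and attributed to Hrushovski's thesis \cite{hrushovski1986contributions}, so there is no in-text proof to compare against. Your outline is the standard proof of the classical theorem the paper is citing, and most of the steps are sound: $\RM(G_x)=1$ by rank additivity, passing to connected components, identifying $X$ with a regular normal abelian subgroup $N$, transporting the $H$-law to $X\setminus\{0\}$ via the orbit map, and using that connected rank-$1$ groups are abelian (Reineke/Cherlin) to get commutativity. The ``in particular'' clause is correctly reduced to Fact~\ref{groupConfig}, though you should note that Fact~\ref{groupConfig} as stated gives a faithful but not explicitly transitive action; one needs to restrict to the cofinite orbit (which exists since finite orbits would force $G^\circ$ into the kernel).

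The one step that does not hold up as written is the production of the normal rank-$1$ subgroup $N$. You say the ``connected components of conjugates of $H$ fill out a second rank-$1$ subgroup which, by a standard connected-component argument, is normal.'' This is not a correct description of any argument I know: the conjugates of $H^\circ$ have pairwise finite intersection precisely because $H^\circ$ is \emph{not} normal, and a union of infinitely many such conjugates is not itself a subgroup. The standard route is instead: $H^\circ$ normal would put $H^\circ$ in the kernel (its conjugates stabilize every $g\cdot x$), so $G^\circ$ is not abelian; by Cherlin's theorem every connected group of Morley rank $2$ is solvable; hence $[G^\circ,G^\circ]$ is a proper nontrivial connected normal subgroup, so of rank $1$, hence abelian, and one takes $N=[G^\circ,G^\circ]$. (Regularity then follows since $N$ is normal and transitive, abelian so all point stabilizers in $N$ coincide, and that common stabilizer lies in the kernel of the faithful $G$-action, hence is trivial.) You correctly flag this as the main obstacle, but the mechanism you propose for overcoming it would not work; you need Cherlin's rank-$2$ solvability theorem or an equivalent structural input.
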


So it makes sense to define:

\begin{defi}
A field configuration is a $2$-dimensional group configuration.
\end{defi}

\section{Preliminaries on valued fields}\label{preliminariesOnValuedFields}

We start with some basic definitions.

\begin{defi}
Given a field $\mathbb K=(K,+,\cdot,0,1)$ we treat it as a first order structure in the language of rings $\mathcal L_R=\{+,\cdot,0,1\}$. Let $ACF_p$ be the first order theory of algebraically closed fields of characteristic $p$. That is $ACF_p$ is the theory of fields of characteristic $p$:

That is $(K,+,0)$ is an abelian group:
\begin{itemize}
    \item $\forall x (x+0=x)$,
    \item $\forall x \exists y (x+y=0)$,
    \item $\forall x,y,z((x+y)+z=x+(y+z))$ and
    \item $\forall x,y (x+y=y+x)$.
\end{itemize}
The product ($\cdot$) is a binary operation defined on $K\times K$ such that $(K\setminus 0,\cdot,1)$ is also an abelian group:

\begin{itemize}
    \item $\forall x (x\cdot 1=x)$,
    \item $\forall x (x\neq 0\implies \exists y (x\cdot y=1))$,
    \item $\forall x,y,z[(z\neq 0\wedge y\neq 0 \wedge z\neq 0)\implies (x\cdot y)+z=x+(y+z)]$ and
    \item $\forall x,y (x\cdot y=y\cdot x)$.
\end{itemize}

And product distributes over addition:

\begin{equation*}
    \forall x,y,z(x\cdot(y+z)=x\cdot y + x\cdot z,
\end{equation*}

together with the scheme of axioms given by $$\forall a_0\ldots\forall a_n \exists x (a_0+a_1 x +\ldots a_n x^n=0)$$  for all $n\geq 1$. 

And we say that $\text{char} (K)=p$, so if $p=0$ we add the scheme of axioms $$\phi_n:=1+1+1\cdots+1\neq 0$$ where $1$ is added $n$ times.

If $\text{char} (K) =p>0$ we add the axioms $\phi_n$ for $n<p$ plus the axiom $\neg \phi_p$.
\end{defi}
\begin{defi}
Given a field $(K,+,\cdot,0,1)$ we say that a set $X\subseteq K^n$ is zariski closed if there is a set of polynomials $\mathfrak a \subseteq K[x_1,\ldots,x_n]$ such that for all $x\in K^n$, $x\in X$ if and only if $f(x)=0$ for all $f\in \mathfrak a$. 
\end{defi}

\begin{fact} (It follows for example from Theorem 3.2.2 of \cite{marker})
The theory $ACF_{p}$ is complete and strongly minimal
\end{fact}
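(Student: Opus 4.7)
The plan is to reduce the statement to two classical ingredients: the \L{}o\'s--Vaught test (for completeness) and quantifier elimination (for strong minimality). First I would observe that any model of $ACF_p$ is infinite, since the algebraic closure of the prime subfield ($\mathbb{F}_p$ or $\mathbb{Q}$) is already infinite. Hence to apply the \L{}o\'s--Vaught test it suffices to exhibit categoricity in some infinite cardinal.

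For categoricity I would invoke Steinitz's theorem: an algebraically closed field is determined up to isomorphism by its characteristic and its transcendence degree over the prime subfield. An algebraically closed field of uncountable cardinality $\kappa$ has transcendence degree $\kappa$ over its prime subfield (since algebraic closure increases cardinality only by a factor of $\aleph_0$). Therefore any two models of $ACF_p$ of the same uncountable cardinality are isomorphic, so $ACF_p$ is $\kappa$-categorical for every uncountable $\kappa$. Combined with the absence of finite models, the \L{}o\'s--Vaught test yields that $ACF_p$ is complete.

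For strong minimality I would appeal to quantifier elimination for $ACF_p$ (the content of Theorem 3.2.2 of \cite{marker}, proved by a back-and-forth argument using the extension properties of algebraically closed fields). Given this, every definable subset $X \subseteq K$ in one variable with parameters is a Boolean combination of sets of the form $\{x \in K : p(x) = 0\}$ for polynomials $p \in K[x]$. Each such set is finite if $p \neq 0$ and is all of $K$ if $p = 0$, so $X$ itself must be finite or cofinite. Together with infiniteness of $K$, this is exactly strong minimality, and since completeness guarantees this holds uniformly in all models, $ACF_p$ is strongly minimal as a theory.

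The main obstacle is really just citing quantifier elimination, which is the deeper of the two ingredients; once it is in hand, both conclusions are one-step corollaries. Since the fact is stated as a citation to \cite{marker}, the expected proof is essentially the combination above with no extra work.
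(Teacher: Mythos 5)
Your proof is correct and matches the standard argument that the paper is citing from Marker: Steinitz's theorem plus the Łoś–Vaught test for completeness, and quantifier elimination reducing one-variable definable sets to Boolean combinations of polynomial zero-sets for strong minimality. The paper gives no proof of its own (it defers entirely to the reference), so there is nothing to contrast; your write-up is exactly the expected unpacking of that citation.
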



We will need:

\begin{fact}\label{bezoutTheorem}(Bezout Theorem)

Let $k$ be algebraically closed field and let $F(x,y)$ and $G(x,y)$ be polynomials with coefficients on $k$ with no common non constant divisors, then if $V$ is the set of zeros for $F$ and $W$ is the set of zeros for $G$ then $V\cap W$ is finite and it has less than $\deg(F)\deg(G)$ points counting multiplicities. If we considerate the closures of $V$ and $W$ in the projective space $\mathbb P^2$ then the number of points intersection is exactly $\deg(F)\deg(G)$ (counting multiplicities).
\end{fact}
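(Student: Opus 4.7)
The plan is to split the statement into two parts: first the affine finiteness/inequality, and then the projective equality with multiplicities. For the affine part I would argue via the resultant, and for the projective part via the Hilbert polynomial of a complete intersection.

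First, after replacing $F$ and $G$ by their pairwise-prime parts (which changes neither the vanishing loci as sets nor the upper bounds), I would pass to $k(x)[y]$ and form the resultant $R(x):=\mathrm{Res}_y(F,G)\in k[x]$. Since $F$ and $G$ share no non-constant common factor in $k[x,y]$, and $k[x,y]$ is a UFD, they also share no common factor in $k(x)[y]$, so $R(x)$ is not the zero polynomial. A standard estimate on the Sylvester determinant gives $\deg R\leq\deg F\cdot\deg G$. If $(a,b)\in V\cap W$ and the leading coefficients of $F,G$ in $y$ do not vanish at $a$, then $F(a,y)$ and $G(a,y)$ share the common root $b$, forcing $R(a)=0$; a small bookkeeping argument (swapping the roles of $x$ and $y$ or using a generic linear change of coordinates) handles the finitely many bad fibres. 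This shows $V\cap W$ is finite and, counting suitably, bounded by $\deg F\cdot\deg G$.

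For the projective equality, I would homogenize to $\tilde F,\tilde G\in k[X,Y,Z]$ of degrees $d=\deg F$ and $e=\deg G$. Because $\tilde F,\tilde G$ have no common factor, their zero locus in $\mathbb{P}^2$ is $0$-dimensional, so $(\tilde F,\tilde G)$ is a regular sequence in $k[X,Y,Z]$ and the Koszul complex
\[
0\to k[X,Y,Z](-d-e)\to k[X,Y,Z](-d)\oplus k[X,Y,Z](-e)\to k[X,Y,Z]\to k[X,Y,Z]/(\tilde F,\tilde G)\to 0
\]
is exact. Taking Hilbert functions and evaluating at large $n$ gives
\[
\dim_k\bigl(k[X,Y,Z]/(\tilde F,\tilde G)\bigr)_n=\binom{n+2}{2}-\binom{n-d+2}{2}-\binom{n-e+2}{2}+\binom{n-d-e+2}{2}=de.
\]
Then I would identify this dimension, for $n$ large, with $\sum_{p\in V(\tilde F)\cap V(\tilde G)} \dim_k \mathcal{O}_{\mathbb{P}^2,p}/(\tilde F,\tilde G)$, i.e.\ the sum of local intersection multiplicities, via the usual local-to-global argument for a $0$-dimensional closed subscheme of projective space.

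The main obstacle is the projective half: making precise what ``counting multiplicities'' means and showing it agrees with the global Hilbert-polynomial count $de$. The resultant argument is essentially linear-algebraic and gives the set-theoretic finiteness and the inequality almost immediately, but the upgrade to exact equality requires committing to a definition of local intersection multiplicity (either $\dim_k \mathcal{O}_{p}/(\tilde F,\tilde G)$ or the order of vanishing of a suitable resultant at each intersection point) and checking that with this definition the two bookkeepings match. Once the Koszul exactness from the regular-sequence property is in hand, the rest is a standard Hilbert-polynomial computation.
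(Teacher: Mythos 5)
The paper records this statement as a \emph{Fact} without any accompanying proof; it is cited as known background from algebraic geometry, so there is no argument of the paper's to compare yours against. Evaluated on its own, your outline is a correct and standard route to Bezout: the resultant argument cleanly gives set-theoretic finiteness and the affine upper bound, and the Koszul/Hilbert-polynomial computation gives the exact projective count once one fixes $\dim_k\mathcal{O}_{\mathbb{P}^2,p}/(\tilde F,\tilde G)$ as the local intersection multiplicity. The pieces that need the most care are exactly the ones you flag: the degree bound on $\mathrm{Res}_y(F,G)$ as a polynomial in $x$ is $\deg_x F\cdot\deg_y G+\deg_x G\cdot\deg_y F$, so to extract $\deg F\cdot\deg G$ you really do need the generic linear change of coordinates that makes the $y$-leading coefficients constants; and the passage from the graded length $de$ to the sum of local lengths uses that for a $0$-dimensional closed subscheme $Z\subseteq\mathbb{P}^2$ the Hilbert function of its homogeneous coordinate ring stabilizes (for $n\gg 0$) to $h^0(\mathcal{O}_Z)=\sum_p\mathrm{length}\,\mathcal{O}_{Z,p}$. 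The regular-sequence step is fine: $k[X,Y,Z]$ is Cohen--Macaulay, and two homogeneous forms with no common factor cut the Krull dimension from $3$ to $1$, hence form a regular sequence, which is what makes the Koszul complex exact. In short, nothing is wrong; you have supplied a proof where the paper chose to cite.
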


\begin{defi} 
If $K$ is a field, a valuation on $K$ is an ordered abelian group $\Gamma$ togheter with a valuation map $v:K\to \Gamma\cup\{\infty\}$ (where $\infty$ is an extra element such that $\infty>\gamma$ and $\infty+\gamma=\gamma+\infty=\infty$ for each $\gamma\in \Gamma$) such that for all $x,y\in K$, $v$ satisfies:

\begin{enumerate}
    \item $v(x)=\infty$ if and only if $x=0$.
    \item $v(x\cdot y)=v(x)+v(y)$.
    \item $v(x+y)\geq \min\{v(x),v(y)\}$. 
\end{enumerate}
\end{defi}
If $K$ is valued, the valuation ring is $\mathcal O_K=\{x\in K:v(x)\geq 0\}$. It is easy to see that is a subring of $K$ and its only maximal ideal is: $\mathfrak m=\{x\in K:v(x)>0\}$. The residue field is $k=\mathcal O_K/\mathfrak m$.

We treat a valued field as a two sorted structure $(K,\Gamma,v)$ 
where $K$ is a field, $\Gamma$ is an algebraically closed ordered group and $v:K\to \Gamma\cup\{\infty\}$ is the valuation. Let $\mathcal L_{R,v}$ be the correspondent two sorted language.
\begin{defi}
Let ACVF be the first order theory in the language $\mathcal L_{R,v}$ saying that $K$ is an algebraically closed field and $v$ is a valuation into an ordered abelian group $\Gamma$:

In ACVF we have the axioms that says that $\Gamma$ is an abelan group plus the extra axioms

$(\Gamma,\leq)$ is a linear order:

\begin{itemize}
\item $\forall x,y\in \Gamma(x\leq y \vee y\leq x)$,
\item $\forall x,y,z\in \Gamma(x\leq y \wedge y\leq x\implies x\leq z)$, 
\item $\forall x,y\in \Gamma(x\leq y\wedge y\leq x\implies x=y)$ and 
\item $\forall x\in \Gamma (x\leq x)$.

\end{itemize}

And addition on $\Gamma$ is compatible with $\leq$:

$$\forall x,y,z\in \Gamma (x\leq y\implies x+z\leq y+z)$$

\end{defi}

From now we fix $\mathbb K=(K,+,\cdot,\Gamma,v)$ a model of ACVF.




\begin{defi}
An open ball is a subset of $K$ of the form $$B_{\gamma}(a):=\{x\in K:v(x-a)>\gamma\}$$ where $a\in K$ and $\gamma\in \Gamma$.
A closed ball is a subset of $K$ of the form
$$B_{\geq\gamma}(a)=\{x\in K:v(x-a)\geq\gamma\}$$

\end{defi}

In this setting we have two natural topologies:  the Zariski and the valuation topologies. The latter is generated by the balls. When we say `open' or `closed' we mean in the valuation topology. When we want to refer to the Zariski topology we will be explicit about it.

\begin{defi}
If $D\subseteq K^n$ is a $\mathbb K$-definable set we define $\dim D$ as the usual algebraic dimension of the Zariski closure of $D$. Moreover if $p(x)$ is an $n$-type over $A$, we define $\dim p=\min\{\dim X:X\in p\}$ and if $a\in K^n$ then $\dim (a/A)$ is defined as $\dim (tp(a/A))$
\end{defi}

\begin{defi}
If $X\subseteq K^n$ is $\mathbb K$-definable over $A$, we say that $x=(x_1\ldots,x_m)\in X^m$ is a tuple of independent generics of $X$ if $\dim (x/A)=m\dim (X)$
\end{defi}

We have the next classic theorem that follows from Holly's work in \cite{HO}. As stated here is Theorem 7.1 of \cite{hru-haskell2005stable}.

\begin{fact}\label{QEf} (Quantifier Elimination)
$K$ has Quantifier elimination in the language $\mathcal L_{R,v}$. In particular any definable subset of $K^n$ is the intersection of a (valued) open subset of $K^n$ with a Zariski closed set. 
\end{fact}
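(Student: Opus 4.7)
The plan is to follow the classical back-and-forth proof of Robinson. Let $\mathbb K_1, \mathbb K_2 \models \mathrm{ACVF}$ with $\mathbb K_1$ sufficiently saturated, and let $\phi\colon R\to \mathbb K_1$ be an $\mathcal L_{R,v}$-embedding from a substructure $R\s \mathbb K_2$. By a Zorn argument it suffices to extend $\phi$ one element at a time, so given $a\in \mathbb K_2$ I want to find $a'\in \mathbb K_1$ such that $\phi$ extends to an embedding on the substructure generated by $R$ and $a$. I split into cases according to the extension $\mathrm{Frac}(R) \hookrightarrow \mathrm{Frac}(R)(a)$.

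If $a$ is algebraic over $\mathrm{Frac}(R)$, let $p\in R[x]$ be a polynomial having $a$ as a root. The image $p^\phi$ has roots in $\mathbb K_1$ by algebraic closedness, and the valuation on a fixed algebraic closure extends uniquely up to conjugation over $\mathrm{Frac}(R)$, so some root of $p^\phi$ in $\mathbb K_1$ carries the same valuation and residue data as $a$; I set $a'$ to be such a root. If $a$ is transcendental over $\mathrm{Frac}(R)$ and either $v(a)$ lies outside the divisible hull of $v(\mathrm{Frac}(R)^\times)$, or else (after rescaling to reach valuation $0$) the residue of $a$ is transcendental over the residue field generated by $R$, saturation of $\mathbb K_1$ directly supplies a matching $a'$, since the conditions imposing the right value or right residue are finitely consistent.

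The main obstacle is the remaining case, that of an \emph{immediate} transcendental extension, in which $a$ contributes no new values and no new residues over $R$. Here the quantifier-free type of $a$ over $R$ is coded, à la Kaplansky, by a pseudo-Cauchy sequence from $R$ without a pseudo-limit in $R$; saturation of $\mathbb K_1$ produces a pseudo-limit $a'\in \mathbb K_1$ of (the $\phi$-image of) this sequence, and the algebraic closedness of $\mathbb K_1$ together with Kaplansky's uniqueness theorem guarantees that $\phi$ extends by $a\mapsto a'$. Combining the three cases yields the back-and-forth, hence quantifier elimination in $\mathcal L_{R,v}$.

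The second clause follows by inspecting the shape of atomic $\mathcal L_{R,v}$-formulas: these are $f(x)=0$ and $v(f(x))\geq v(g(x))$ with $f,g$ polynomials over the parameters. The first kind cuts out a Zariski closed set; the second kind, away from $\{g=0\}$, is a closed/open condition in the valuation topology (the preimage under $x\mapsto f(x)/g(x)$ of the valuation ring, etc.). By QE every definable set is a Boolean combination of such atomic conditions; putting such a combination in disjunctive normal form and separating equalities from valuation conditions expresses it, within each disjunct, as the intersection of a Zariski closed set with a set cut out by valuation (in)equalities — that is, the intersection of a Zariski closed set with a set defined by open valuation conditions, as claimed.
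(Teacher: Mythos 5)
The paper does not prove this statement; it is recorded as a Fact citing Holly and Theorem~7.1 of Haskell--Hrushovski. So there is no ``paper proof'' to compare against, and your job is effectively to supply the standard QE argument. Your sketch of the Robinson-style back-and-forth (closing under algebraic extensions with the conjugacy of extensions of a valuation, then handling residual/ramified transcendental extensions by saturation, then immediate transcendental extensions by pc-sequences and Kaplansky) is indeed the classical route and is correct at the level of detail given. Two points you gloss over are worth flagging: the proof is cleanest if you first reduce to the case where $\mathrm{Frac}(R)$ is already an algebraically closed valued field (so the value group is divisible and the residue field is algebraically closed, and the only remaining immediate pc-sequences are of transcendental type, where uniqueness applies); and since the paper's $\mathcal L_{R,v}$ is genuinely two-sorted, you should say a word about why adding elements of the $\Gamma$-sort is automatic (because in a model $v$ is surjective, so $\Gamma$-elements of the substructure can be taken to lie in the image of $v$ restricted to the $K$-sort).

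The derivation of the ``In particular'' clause has a small gap. After putting a quantifier-free formula in DNF, each disjunct is an intersection of conditions $f_i=0$, $g_j\neq 0$, and $v(p_k)\geq v(q_k)$ (or the strict/negated versions). You correctly isolate the Zariski-closed part $\bigcap\{f_i=0\}$. But the valuation-inequality $v(p)\geq v(q)$ does not define a valuation-open set near the locus $\{q=0\}$: there it degenerates to the Zariski-closed condition $p=0$. You acknowledge this (``away from $\{g=0\}$'') but then assert the disjunct is of the form (Zariski closed)~$\cap$~(valuation open) ``as claimed'' without dealing with what happens on $\{q=0\}$. One has to either absorb the extra $q_k\neq 0$ conditions into the open part and handle the degenerate locus by induction on dimension, or accept a finite \emph{union} of such intersections (which is what Fact~\ref{decompositionLemma} in the paper actually says). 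Relatedly, the paper's phrasing ``is the intersection of\ldots'' should really be read as ``is a finite union of such intersections''; your DNF argument only produces the union form, so you should not claim the single-intersection statement verbatim.
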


As a corollaries we have:
\begin{fact}\label{decompositionLemma}
Suppose $Y\s K^2$ is $\mathbb{K}$-definable and infinite. Then $Y$ can be written as a finite union of  subsets of $K^2$ that are relatively open subsets of irreducible Zariski closed sets.
\end{fact}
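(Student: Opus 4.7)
The plan is to deduce Fact \ref{decompositionLemma} directly from the quantifier elimination statement, Fact \ref{QEf}, combined with the standard decomposition of Zariski closed sets into irreducible components.

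First, I would apply Fact \ref{QEf} to write $Y = U \cap Z$, where $U \subseteq K^2$ is open in the valuation topology and $Z \subseteq K^2$ is Zariski closed. The next step is to invoke the Hilbert basis theorem: since $K[x,y]$ is Noetherian, the affine space $K^2$ is a Noetherian topological space in the Zariski topology. It follows that $Z$ admits a decomposition into finitely many irreducible Zariski closed components,
\[
Z = Z_1 \cup Z_2 \cup \cdots \cup Z_m,
\]
where each $Z_i$ is irreducible. Distributing the intersection yields
\[
Y = U \cap Z = \bigcup_{i=1}^m (U \cap Z_i),
\]
and each $U \cap Z_i$ is, by definition of the subspace topology, a relatively open subset of the irreducible Zariski closed set $Z_i$ (with respect to the valuation topology restricted to $Z_i$). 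This is exactly the required decomposition.

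The only step that requires any care is the invocation of Fact \ref{QEf}: the statement of that fact as written already delivers the single intersection $U \cap Z$, so no Boolean manipulation is needed. If instead one had to start from the weaker form asserting only that $Y$ is a Boolean combination of Zariski closed and valued-open sets, one would first rewrite $Y$ as a finite union of such intersections and then apply the irreducible decomposition to each piece separately. Either way the argument terminates in finitely many steps because of Noetherianity, and there is no serious obstacle; the content of the lemma is essentially a bookkeeping consequence of quantifier elimination together with classical algebraic geometry over $K$.
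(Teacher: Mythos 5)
The paper does not actually write out a proof of Fact \ref{decompositionLemma}; it is simply stated as a corollary of the quantifier elimination statement, Fact \ref{QEf}. Your argument supplies the obvious missing proof, and it is correct: take the presentation $Y=U\cap Z$ that Fact \ref{QEf} delivers, decompose $Z$ into finitely many irreducible components $Z_1,\dots,Z_m$ by Noetherianity of $K[x,y]$, and distribute the intersection to obtain $Y=\bigcup_i(U\cap Z_i)$, where each $U\cap Z_i$ is relatively open in $Z_i$ in the valuation topology (which is indeed the intended topology, per the paper's convention a few lines earlier). Your final paragraph also correctly anticipates the one real subtlety: Fact \ref{QEf} as stated (``\emph{the} intersection of an open set with a Zariski closed set'') is stronger than what quantifier elimination in ACVF literally gives, and in general one should start from a presentation of $Y$ as a \emph{finite union} of such intersections and then apply the irreducible decomposition to each piece. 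That caveat is the right thing to note, and the argument goes through unchanged in either formulation. No gaps.
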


\begin{fact}
If $a\in \acl_{\mathbb K} (B)$ for some set of parameters $B$ then there is a finite set definable just with the field sructure of $\mathbb K$ with parameters $B$ containing $a$.

In other words $\acl_{\mathbb K}=\acl_{\mathbb K^{f}}$ where $\mathbb K^{f}$ is $\mathbb K$ seen as a structure in the language of rings.
\end{fact}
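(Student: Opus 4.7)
The plan is to prove the nontrivial inclusion $\acl_{\mathbb{K}}(B) \subseteq \acl_{\mathbb{K}^f}(B)$ by a direct appeal to quantifier elimination; the reverse inclusion is immediate since $\mathbb{K}^f$ is a reduct of $\mathbb{K}$, so every $\mathbb{K}^f$-definable set over $B$ is automatically $\mathbb{K}$-definable over $B$.

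So fix $a \in \acl_{\mathbb{K}}(B)$ and choose a finite $\mathbb{K}$-definable set $X \subseteq K$ over $B$ containing $a$. By Fact~\ref{QEf} we may write $X = U \cap V$ with $U \subseteq K$ valuation-open and $V \subseteq K$ Zariski closed, both defined over $B$. The key observation is that Zariski closed subsets of $K$ are either finite or all of $K$.

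If $V = K$, then $X$ would coincide with $U$ and hence be a non-empty valuation-open subset of $K$; but any such set contains a ball, and balls are infinite, contradicting the finiteness of $X$. Hence $V$ is finite. As the zero set of finitely many polynomials with coefficients from $B$, $V$ is a finite $\mathbb{K}^f$-definable set over $B$ containing $a$, so $a \in \acl_{\mathbb{K}^f}(B)$.

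There is essentially no obstacle: the whole argument is a routine consequence of Fact~\ref{QEf} together with the elementary topological observation that non-empty valuation-open subsets of $K$ are infinite. The only minor point to verify is that the parameters witnessing $U$ and $V$ may indeed be chosen from $B$ itself, but this is a standard feature of the quantifier-elimination procedure for ACVF, where any $\mathcal{L}_{R,v}$-formula over $B$ is equivalent to a quantifier-free formula over $B$.
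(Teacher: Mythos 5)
Your overall strategy — apply quantifier elimination, separate the Zariski-closed contribution from the valuation-topology contribution, observe that the latter cannot by itself produce a finite set, and conclude that $a$ lies on a finite $\mathbb{K}^f$-definable Zariski closed set — is indeed the right way to prove this. However, there is a real gap at the step you dismiss as a ``minor point''. Fact~\ref{QEf}, read literally, asserts a single decomposition $X=U\cap V$, which is already an overstatement (compare the finite-union form in Fact~\ref{decompositionLemma}; e.g.\ the set $\{0\}\cup(K\setminus\mathcal O)$ is not an intersection of an open set with a Zariski closed set). But even setting that aside, the crux of the proof is precisely to produce a finite Zariski closed set \emph{which is definable over $B$ in the field language alone}, and neither Fact~\ref{QEf} nor the generic remark that ``QE works over $B$'' supplies this. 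The Zariski closure of $X$ is fixed by $\mathrm{Aut}(\mathbb K/B)$, but a priori not by the larger group $\mathrm{Aut}(\mathbb K^f/B)$, so one cannot simply assert that $V$ is $\mathbb K^f$-definable over $B$ — that assertion is essentially equivalent to the statement you are trying to prove.

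To close the gap one must actually look at the quantifier-free formula. Write the $B$-formula defining $X$ in disjunctive normal form and pass to the disjunct $\psi_i(x)$ realized by $a$; it is a conjunction of conditions $p(x)=0$, $p(x)\neq 0$, and valuation comparisons $v(q(x))\ \Box\ v(r(x))$, where all $p,q,r$ are polynomials with coefficients in the subring generated by $B$. If any nonzero polynomial $p$ over $B$ appearing anywhere in $\psi_i$ (including inside a valuation symbol) vanishes at $a$, then $p^{-1}(0)$ is the desired finite $\mathbb K^f$-definable set over $B$. Otherwise every relevant polynomial is nonvanishing at $a$, and then a perturbation argument finishes: for $\varepsilon$ with $v(\varepsilon)$ large enough, one has $v\bigl(q(a+\varepsilon)\bigr)=v\bigl(q(a)\bigr)$ and $p(a+\varepsilon)\neq 0$ for each of the finitely many polynomials involved, so $a+\varepsilon$ satisfies $\psi_i$, making $\psi_i(\mathbb K)$ infinite and contradicting the finiteness of $X$. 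This case analysis — in particular the perturbation step showing the ``no-nonzero-polynomial'' alternative forces infiniteness — is the missing content; your version replaces it with an appeal to openness that is not directly available from the stated Fact~\ref{QEf}.
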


Now we fix some concepts:
\begin{defi}
If $U \subseteq K^n$ is an open set and $f:U\to K$ is a function, given $a\in U$ we say that $f$ is analytic at $a$ if there is some power series $$g=\sum a_I x^I$$ converging in a neighborhood $U'$ of $0$ such that $f(z+a)=g(z)$ for every $z$ in $U'$. 

We say that $f$ is analytic in $U$ if it is analytic at $a$ for every $a\in U$
\end{defi}

\begin{defi}
If $$F(x_1\ldots,x_n)=\sum_{I=(i_1,\ldots,i_n)}a_I x^{(i_1,\ldots,t_n)},$$  is a power series then $F_{x_k}(x_1\ldots,x_n)$ is the (formal) partial derivative of $F$ respect to $x_k$, that is $$F_{x_k}(x_i,\ldots,x_n)=\sum_{I=(i_1,\ldots,i_n)} i_k a_I  x^{(i_1,\ldots,i_k-1,\ldots,i_n)}.$$
\end{defi}

\begin{defi}
Let $U\subseteq K$ be an open set and let $a\in U$. Suppose $f:U\to K$ analytic at $a$ and its expansion around $a$ is $$f(x)=\sum_{n\geq 0} b_n x^n.$$ We say that $a$ is a zero of $f$ with multiplicity $d$ if $f(a)=0$ and $d=\min\{n:b_n\neq 0\}$.
\end{defi}

\begin{lemma}
Let $f:U\to V$ be a $\mathbb K$ definable function with $U$ and $V$ open subsets of $K^n$ and $K^m$ respectively, then there is some $a\in U$ such that $f$ is analytic at $a$.
\end{lemma}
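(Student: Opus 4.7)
The plan is to combine quantifier elimination with the analytic implicit function theorem in the henselian valued field $K$. By Fact \ref{QEf}, the graph $G_f := \{(x, f(x)) : x \in U\} \subseteq K^{n+m}$ is of the form $W \cap O$, where $W$ is Zariski closed and $O$ is open in the valuation topology. We may take $W$ to be the Zariski closure of $G_f$; since the first projection $\pi_1 : K^{n+m} \to K^n$ is injective on $G_f$ with image $U$, we have $\dim W = n$.

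The second step is to locate a sufficiently generic point of $G_f$. Pick an irreducible component $W_0$ of $W$ whose intersection with $O$ is nonempty; then $\dim W_0 = n$, and $\pi_1|_{W_0}$ is dominant and generically finite (it is injective on the nonempty valuation-open subset $G_f \cap W_0 \subseteq W_0$). Hence the set $U_0 \subseteq W_0$ of points that are smooth on $W_0$, lie on no other component of $W$, and at which $d\pi_1$ has rank $n$ is Zariski open and dense in $W_0$. Since any proper Zariski-closed subset of an irreducible variety over $K$ has empty valuation interior, $U_0$ meets the nonempty valuation-open set $G_f \cap W_0$. Pick $(a,b)$ in this intersection, so that $b = f(a)$.

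Finally, I would invoke the analytic implicit function theorem at $(a,b)$: there exist valuation-open neighborhoods $V_2 \ni a$ in $K^n$ and $V_1 \ni (a,b)$ in $K^{n+m}$, and an analytic function $g : V_2 \to K^m$, such that $V_1 \cap W_0$ equals the graph of $g$ over $V_2$. Shrinking $V_1$ so that $V_1 \subseteq O$ and $V_1$ meets no component of $W$ other than $W_0$, we obtain $V_1 \cap G_f = V_1 \cap W \cap O = V_1 \cap W_0$, which is exactly the graph of $g|_{V_2}$. Since $G_f$ is itself the graph of $f$, this forces $f|_{V_2} = g$, and hence $f$ is analytic at $a$.

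The hard part is the middle step, namely justifying that the Zariski-generic conditions (smoothness of $W_0$, avoidance of the other components of $W$, unramifiedness of $\pi_1$) are compatible with membership in the a priori merely valuation-open set $G_f \cap W_0$. The key principle is that a proper algebraic subvariety of $W_0$ cannot contain a nonempty valuation-open subset of $W_0$; this can be reduced to the statement on a smooth open subset of $W_0$, where an analytic local chart over $K^n$ is available and the claim follows from the fact that nonzero polynomials on $K^n$ have dense nonvanishing locus in the valuation topology.
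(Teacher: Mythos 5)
The paper's own proof of this lemma is empty (a blank \texttt{proof} environment), so there is nothing to compare against directly; I will evaluate your argument on its merits. It is broadly the right strategy in characteristic zero, but it has one genuine gap, and that gap is not cosmetic: it is exactly where the statement \emph{fails} in positive characteristic.

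The serious problem is the step where you claim that ``the set $U_0 \subseteq W_0$ of points \dots at which $d\pi_1$ has rank $n$ is Zariski open and dense in $W_0$.'' For a dominant, generically finite morphism $\pi_1|_{W_0}\colon W_0 \to \mathbb{A}^n$ between $n$-dimensional varieties, density of the locus where $d\pi_1$ has full rank is equivalent to \emph{separability} of $\pi_1|_{W_0}$, which can fail in characteristic $p>0$. Concretely, take $n=m=1$ and $f = \mathrm{Fr}^{-1}\colon x \mapsto x^{1/p}$ (definable by the formula $y^p = x$, bijective since $K$ is algebraically closed). Then $W_0 = V(y^p - x)\subseteq K^2$ and $d\pi_1$ is identically zero on $W_0$, so your set $U_0$ is empty and the implicit function theorem never applies. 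Moreover, the lemma itself is false for this $f$: if $c(z) = a^{1/p} + \sum_{k\ge 1} c_k z^k$ were a convergent power series with $c(z)^p = a + z$, then in characteristic $p$ we would have $c(z)^p = a + \sum_k c_k^p z^{kp}$, whose nonconstant terms all have degree divisible by $p$, so it can never equal $a + z$. Hence $\mathrm{Fr}^{-1}$ is nowhere analytic, and the lemma as stated does not hold in positive characteristic. (This is consistent with the rest of the paper: Proposition~\ref{assmGraphpC} is careful to only conclude that definable curves are locally graphs of $\mathrm{Fr}^{-n}\circ g$ for $g$ analytic, not graphs of analytic functions. The correct formulation of this lemma is presumably of that same shape.)

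A smaller issue: you pick ``an irreducible component $W_0$ of $W$ whose intersection with $O$ is nonempty'' and assert $\dim W_0 = n$. That does not follow; $W$ may have a lower-dimensional component that happens to meet $O$. You should instead pick the component for which $\pi_1(W_0 \cap O)$ has nonempty valuation interior in $U$ (at least one exists, since the finitely many sets $\pi_1(W_i\cap O)$ cover the open set $U$). For that component, $W_0 \cap O$ contains the graph of $f$ over an open ball, which forces $\dim W_0 = n$ and also gives dominance of $\pi_1|_{W_0}$ directly. With that fix, and with the additional hypothesis $\mathrm{char}(K)=0$ (or separability of the relevant projection), your argument via Fact~\ref{QEf}, the Zariski closure, genericity, and the implicit function theorem is sound.
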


\begin{proof}

\end{proof}

We will also need an implicit function theorem, the following is well known and can be found for example in \cite{LAG} (Theorem (10.8), page 84)

\begin{fact}(Implicit Function Theorem)\label{implicitFunctionTheoremf}
Suppose $K$ is complete, $U\subseteq K^n$ is open and $F:U\to K$ is an analytic function at $z\in U$. Assume $z$ is such that $F_{x_n}(z)\neq 0$, then there are $U_1\subseteq K^{n-1}$ and $U_2$ open subsets of $K$, $U'\subseteq U$ open with $z\in U' \cap (U_1\times U_2)$ and $f:U_1\to U_2$ analytic such that:

\begin{equation*}
    \{u\in U':F(u)=0\}=\{(x,f(x)):x\in U_1\}.
\end{equation*}
\end{fact}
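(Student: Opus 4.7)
The plan is to deduce this from the Weierstrass preparation theorem for convergent power series over a complete non-Archimedean field, which is the standard machinery in this setting.

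First, I would normalize: translate so that $z=0$, and discard the trivial case $F(0)\ne 0$ (there $F$ is nonzero on some neighborhood $U'$ of $z$ by continuity of power series, and the conclusion holds with the empty graph). So assume $F(0)=0$. Write $F(x_1,\ldots,x_n)=\sum_I a_I x^I$, converging on some polydisc around $0$. The hypothesis $F_{x_n}(0)\ne 0$ says that $a_{(0,\ldots,0,1)}\ne 0$, i.e.\ $F(0,\ldots,0,x_n) = a_{(0,\ldots,0,1)}\,x_n + O(x_n^2)$; in Weierstrass terminology, $F$ is regular of order $1$ in the last variable.

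Second, I would invoke Weierstrass preparation to obtain, on some sufficiently small polydisc $U_1\times U_2$ centered at $0$, a factorization
\begin{equation*}
F(x_1,\ldots,x_n) \;=\; u(x_1,\ldots,x_n)\cdot\bigl(x_n - g(x_1,\ldots,x_{n-1})\bigr)
\end{equation*}
with $u$ an analytic unit (i.e.\ $u(0)\ne 0$, so nonvanishing after shrinking) and $g$ an analytic function on $U_1$. Setting $f:=g$ and $U':=U_1\times U_2$, the zero set of $F$ inside $U'$ is precisely the graph of $f$, which is what is required.

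The real work is hidden in Weierstrass preparation, which is where the completeness of $K$ is actually used; its proof proceeds via the Gauss norm on restricted power series and a Banach-style contraction argument. If one wishes to avoid that black box, an equivalent direct route is a fixed-point argument: writing $a:=F_{x_n}(0)$ and $T_x(y):=y-F(x,y)/a$ for each small $x\in K^{n-1}$, the Taylor expansion of $F$ around $0$ shows that on a sufficiently small ball in $y$, uniformly for $x$ in a small ball around $0$, the map $T_x$ is a strict contraction in the valuation metric. Completeness then yields a unique fixed point $f(x)$, and analyticity is read off by running the Picard iteration inside the ring of restricted power series and checking that the iterates converge to a single element there. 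The main obstacle in either route is the same: transferring the classical Banach fixed-point machinery to the non-Archimedean setting and keeping track of the polydisc of convergence.
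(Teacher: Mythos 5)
The paper does not prove this statement at all: it is labeled a \emph{Fact} and is simply cited as Theorem (10.8) of the reference \cite{LAG}, with no argument given. So there is no in-paper proof to compare against. Your sketch via the non-Archimedean Weierstrass Preparation Theorem is the standard textbook route to this result, and in fact it is the route taken in the cited source (the same reference is used in the paper's proof of Fact \ref{continuityOfRootsTheoremf}, where Theorem (10.3) of \cite{LAG}, the preparation theorem, is invoked explicitly). Your alternative fixed-point argument is also a valid and well-known way to obtain the conclusion, and you correctly identify completeness of $K$ as the ingredient that powers either route.

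Two small points worth tightening if you were to write this out in full. First, after Weierstrass factorization $F = u\cdot(x_n - g)$ on a polydisc, you still need to shrink $U_1$ so that $g(U_1)\subseteq U_2$ (using $g(0)=0$ and continuity) before the zero set of $F$ in $U' := U_1\times U_2$ is exactly the graph; otherwise some points $(x,g(x))$ could fall outside $U'$. Second, your ``trivial case'' $F(z)\neq 0$ does not actually satisfy the conclusion as literally stated, since $U_1$ must contain the first $n-1$ coordinates of $z$ and hence cannot be empty, so the graph is forced to be nonempty; the statement implicitly carries the hypothesis $F(z)=0$, and it is cleaner to just make that assumption explicit rather than claim the conclusion holds vacuously. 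Neither point affects the soundness of your overall strategy.
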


As a corollary of this we have the Inverse Function Theorem:

\begin{fact}\label{inversionf}(Inverse Function Theorem, Theorem 10.10 in \cite{LAG})
Let $F(x)$ be analytic at $0$, assume $F(0)=0$ and $F_x (0)\neq 0$ then there is an unique analytic function $G$ converging in a neighborhood of $0$ such that $F(G(y))=y$ for each $y$ in such a neighborhood.
\end{fact}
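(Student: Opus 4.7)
The plan is to deduce this directly from the Implicit Function Theorem (Fact \ref{implicitFunctionTheoremf}). Define the auxiliary two-variable function
\[
H : K^2 \to K, \qquad H(y,x) := F(x) - y.
\]
Since $F$ is analytic at $0$, a routine check on power series shows that $H$ is analytic at $(0,0)$: subtracting the monomial $y$ from the expansion of $F(x)$ around $0$ gives a convergent power series in $(y,x)$ on a suitable neighborhood of the origin. We have $H(0,0) = F(0)-0 = 0$ and the partial of $H$ with respect to the last variable at $(0,0)$ is $H_{x}(0,0) = F_{x}(0) \neq 0$.

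Next I would apply Fact \ref{implicitFunctionTheoremf} with $n=2$ to $H$ at the point $(0,0)$. This produces open neighborhoods $U_1, U_2 \subseteq K$ of $0$, an open set $U' \subseteq K^2$ containing $(0,0)$, and an analytic function $G : U_1 \to U_2$ such that
\[
\{(y,x) \in U' : H(y,x) = 0\} \;=\; \{(y,G(y)) : y \in U_1\}.
\]
Unwinding the definition of $H$, this says exactly that on $U'$ the equation $F(x)=y$ is equivalent to $x=G(y)$, and in particular $F(G(y)) = y$ for all $y \in U_1$, which is the existence half of the statement.

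For uniqueness, suppose $\widetilde G$ is another analytic function defined on some neighborhood of $0$ with $F(\widetilde G(y)) = y$ and $\widetilde G(0)=0$ (the latter is forced by plugging in $y=0$ and using that $F^{-1}(0)=\{0\}$ locally, since $F_x(0)\neq 0$ implies $F$ is injective near $0$ by the existence half itself). Then $\widetilde G$ is continuous at $0$, so after shrinking its domain we may assume $(y,\widetilde G(y)) \in U'$ for all $y$ in that domain. But then $(y,\widetilde G(y))$ lies in the zero set of $H$ inside $U'$, and the IFT tells us this zero set is precisely the graph of $G$. Hence $\widetilde G = G$ on their common domain.

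The only step with any real content is verifying that the two-variable function $H(y,x)=F(x)-y$ is analytic at $(0,0)$, which is immediate from manipulating the power series defining $F$. Everything else is a formal translation between ``graph of a zero set'' and ``inverse function''. I do not anticipate any genuine obstacle, since the heavy lifting (convergence of the inverse power series) has been packaged into Fact \ref{implicitFunctionTheoremf}; if one instead wanted a direct proof, the main difficulty would be setting up a Newton-style iteration or a formal power-series inversion and showing that the resulting series has a positive radius of convergence in the valuation topology, which is exactly the content of the completeness hypothesis on $K$.
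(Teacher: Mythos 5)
Your deduction of Fact \ref{inversionf} from the Implicit Function Theorem is correct and is exactly the route the paper indicates: the text introduces the fact with ``As a corollary of this [Fact \ref{implicitFunctionTheoremf}] we have the Inverse Function Theorem'' and then cites \cite{LAG} rather than writing out the reduction, and your argument supplies precisely that reduction (with the auxiliary $H(y,x)=F(x)-y$, $n=2$, solving for the last coordinate). The only small point worth flagging is that Fact \ref{implicitFunctionTheoremf} carries a completeness hypothesis on $K$ that is not repeated in the printed statement of Fact \ref{inversionf}; your derivation tacitly inherits it, which is the right reading, as you note at the end.
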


Now we state some strong results on valued fields, first a theorem about continuity of roots:

\begin{fact}\label{continuityOfRootsTheoremf}(Continuity of Roots)

Assume $K$ is complete. Suppose $U_1, U_2$ are open subsets of $K$ and $$F:U_1\times U_2\to K$$ is analytic  at $(a,b)\in U_1\times U_2$. Assume that the function $F(*,b)$ has a zero of multiplicity $d>0$ at $a$. Then there are open sets $U_a$ and $U_b$ with 
$a\in U_a\subseteq U_1$ and
$b\in U_b\subseteq U_2$ such that:

\begin{enumerate}
    \item $a$ is the only $x\in U_a$ such that $F(x,b)=0$
    \item For each $y\in U_b$ the function $F(*,b)$ has exactly $d$ zeros in $U_a$ (counting multiplicities)
\end{enumerate}

\end{fact}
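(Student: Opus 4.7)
The plan is to reduce to the Weierstrass Preparation Theorem for complete non-archimedean valued fields. After translating, we may assume $(a,b)=(0,0)$, so that $F$ is given by a convergent power series $F(x,y)=\sum_{i,j} c_{ij} x^i y^j$ on some polydisc around the origin, and the hypothesis becomes $F(x,0)=c_{d,0}x^d+O(x^{d+1})$ with $c_{d,0}\neq 0$.

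Apply Weierstrass Preparation: on a sufficiently small polydisc $D_1\times D_2$, write $F(x,y)=u(x,y)\,P(x,y)$, where $u$ is a unit in the ring of analytic functions on $D_1\times D_2$ (i.e.\ analytic with $u(0,0)\neq 0$, hence nowhere zero after shrinking), and $P(x,y)=x^d+a_{d-1}(y)x^{d-1}+\cdots+a_0(y)$ is a Weierstrass polynomial whose coefficients $a_i(y)$ are analytic on $D_2$ with $a_i(0)=0$. Since $u$ is nowhere zero, for each fixed $y\in D_2$ the zeros of $F(\cdot,y)$ in $D_1$ coincide (with multiplicities) with the zeros of the monic polynomial $P(\cdot,y)$; and because $K$ is algebraically closed, $P(\cdot,y)$ has exactly $d$ roots in $K$ counted with multiplicity.

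It remains to ensure that after further shrinking $D_2$, all $d$ roots of $P(\cdot,y)$ actually lie in $D_1$. This is a continuity-of-roots statement for a monic polynomial whose coefficients $a_i(y)$ are small when $y$ is small. In the non-archimedean setting this is a clean Newton-polygon computation: if $v(a_i(y))\geq \epsilon(d-i)$ for all $i$, then every root $\xi$ of $P(\cdot,y)$ satisfies $v(\xi)\geq \epsilon$. Since $a_i(0)=0$ and each $a_i$ is analytic, $v(a_i(y))\to\infty$ as $v(y)\to\infty$, so we can choose $D_2$ small enough that all roots of $P(\cdot,y)$ lie in an arbitrarily small ball around $0$, in particular in $D_1$. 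Setting $U_a:=D_1$ and $U_b:=D_2$ then yields clause~(2), and clause~(1) is immediate because $P(\cdot,0)=x^d$ has $0$ as its unique zero in $D_1$.

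The main obstacle I anticipate is invoking and (if necessary) proving Weierstrass Preparation in this precise analytic setting: one needs the coefficient $c_{d,0}$ to be a unit, which is available up to multiplying by a scalar, but verifying convergence of the factorization on a prescribed polydisc involves an estimate that uses completeness of $K$ in an essential way. Once that factorization is in hand, the Newton-polygon bound on the roots is purely formal and poses no difficulty.
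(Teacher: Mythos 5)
Your approach matches the paper's: both reduce to a Weierstrass polynomial $F = (\text{unit}) \cdot F^*$ via Weierstrass Preparation (the paper cites Theorem (10.3)(2) of Lang), use that the unit factor never vanishes to identify zeros of $F(\cdot,y)$ with roots of the monic factor $F^*(\cdot,y)$, invoke algebraic closedness of $K$ to get exactly $d$ roots with multiplicity, and then shrink the $y$-domain so those roots stay inside $U_a$. The only difference is the last shrinking step, where the paper simply cites Theorem (11.3) of the same reference while you supply the Newton-polygon estimate that proves it; your estimate is correct and is in fact the standard content behind that cited theorem.
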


\begin{proof}
We may assume $(a,b)=(0,0)$. By Theorem (10.3)(2) of \cite{LAG} there is some $F^*(x,y)=f_0(y)+f_1(y)x+\ldots+x^d$ such that each $f_j$ is analytic, $f_i(0)=0$ for $i=1,2,\ldots, d-1$, and there is some $\delta$ an unit of $K[[x,y]]$ such that $F=F^* \delta$. Because of Theorem (11.3) of \cite{LAG} there is an open set $U_b$ such that each $f_i$ is convergent in $U_b$, $0\in U_b$ and for all $y\in K$ if $y\in U_b$ and $F^*(x,y)=0$ then $x\in U_1$. Moreover for all $y\in U_b$ the polynomial $F^*(*,y)$ is a polynomial of degree $d$ in the first variable so as $K$ is algebraically closed there are exactly $d$ roots (counting multiplicities) and all of them belongs to $U_1$ 
\end{proof}

We will also need an identity theorem for expansion of power series:

\begin{fact}\label{identityTheoremf} (Identity Theorem (10.5.2) of \cite{LAG})

Let $$f(x_1,\ldots,x_n)=\sum_I a_I x_1^{i_1}\ldots x_x^{i_n}$$ be a power series converging in a neighborhood $D(f)$ of $(0,\ldots,0)$. Assume that for all $x$ in some open set $U\subseteq D(f)$ one has that $f(x)=0$, then $a_I=0$ for all $I$.
\end{fact}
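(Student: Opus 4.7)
The plan is to reduce the statement to a centered identity theorem (vanishing on a polydisc around the origin implies all coefficients vanish), then induct on the number of variables, with the one-variable base case handled by an elementary factorization argument. The crucial non-archimedean input is the ultrametric identity that allows the re-expansion of $f$ around an arbitrary point of $U$ to converge on the same polydisc as the original series.

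\textbf{Step 1 (Centering via the ultrametric).} Shrinking if necessary, I may assume $D(f)$ is an open polydisc $\{x : v(x_i) > \delta_i\}$ around $0$, and that $U$ contains an open polydisc $P = \{x : v(x_i - a_i) > \gamma_i\}$ around some $a\in U\subseteq D(f)$. Since $v(a_i) > \delta_i$ for each $i$, the non-archimedean triangle inequality gives $\{x : v(x_i - a_i) > \delta_i\} = \{x : v(x_i) > \delta_i\} = D(f)$. Therefore the re-expansion $f(x) = \sum_J b_J (x-a)^J$ converges on the same polydisc $D(f)$, written in the shifted variable $y = x - a$, and the hypothesis on $U$ gives that $\sum_J b_J y^J$ vanishes on the polydisc $\{y : v(y_i) > \gamma_i\}$ around $0$. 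Thus it suffices to establish: a power series around $0$ that vanishes on a polydisc around $0$ has all zero coefficients. Once this is known, the re-expansion is identically zero, so $f$ vanishes on all of $D(f)$, and a second application of the same centered statement (now to the expansion of $f$ around $0$) yields $a_I = 0$.

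\textbf{Step 2 (Base case $n=1$).} Let $g(x) = \sum_k c_k x^k$ converge on $B_\delta(0)$ and vanish on $B_\gamma(0)$. If some $c_k \neq 0$, let $N$ be minimal and write $g(x) = x^N h(x)$, where $h$ is analytic on $B_\delta(0)$ and $h(0) = c_N \neq 0$. Continuity of $h$ at $0$ produces an open ball around $0$ on which $h$ does not vanish, so in that ball $g$ vanishes only at $x = 0$, contradicting the hypothesis that $g\equiv 0$ on $B_\gamma(0)$.

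\textbf{Step 3 (Inductive step).} For $n\geq 2$ write $f(y_1,\ldots,y_n) = \sum_{k\geq 0} g_k(y_1,\ldots,y_{n-1})\, y_n^k$, with each $g_k$ a convergent power series on the appropriate $(n-1)$-polydisc around $0$. For any fixed $(w_1,\ldots,w_{n-1})$ in a sub-polydisc around $0$, the one-variable series $\sum_k g_k(w)\, y_n^k$ vanishes on a ball around $0$, so Step 2 gives $g_k(w) = 0$ for every $k$. Since this holds on an entire open $(n-1)$-polydisc, the induction hypothesis forces all coefficients of every $g_k$ to vanish, hence all coefficients of $f$ are zero.

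The main obstacle is the centering performed in Step 1: the proof hinges on the ultrametric equality $B_{\delta}(a) = B_{\delta}(0)$ whenever $a\in B_{\delta}(0)$, which is what makes the re-expansion around an arbitrary point of $U$ converge on the full original polydisc. In the archimedean world this alignment would be replaced by analytic continuation along paths, a tool that is unavailable in the totally disconnected valuation topology on $K$; once the ultrametric trick has done its work, the remainder is a routine induction that bottoms out in the one-line factorization $g(x) = x^N h(x)$ with $h(0)\neq 0$.
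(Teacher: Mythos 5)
The paper does not prove this statement itself; it simply cites it as Theorem (10.5.2) of \cite{LAG}, so there is no in-paper argument to compare against. Your proof is correct and self-contained, and it takes the standard route for the non-archimedean identity theorem: reduce to the origin-centered case by observing that (thanks to the ultrametric equality $B_\delta(a)=B_\delta(0)$ for $a\in B_\delta(0)$) the re-expansion of $f$ around a point $a\in U$ converges on the same polydisc, then induct on the number of variables, with the one-variable base case disposed of by the factorization $g(x)=x^N h(x)$, $h(0)\neq 0$. The only place where you are a bit terse is the assertion in Step 1 that the re-expansion $\sum_J b_J(x-a)^J$ converges on all of $D(f)$ and represents $f$ there; this requires checking unconditional convergence of the double series $\sum_I\sum_{J\leq I} a_I\binom{I}{J}a^{I-J}y^J$, which does go through in the non-archimedean setting (the binomial coefficients lie in the valuation ring, so $v(a_I\binom{I}{J}a^{I-J}y^J)>v(a_I)+\sum_k i_k\delta_k\to\infty$), but it deserves at least a sentence rather than being absorbed into ``the ultrametric trick.''
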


\section{Partial Isomorphism}\label{partialIsomorphism}

In this section we present the proof of Theorem \ref{confiGroupIso}, we follow lines on  \cite{HRPIGroups}.

From now let $\mathcal H=(H,\otimes)$ be a fixed group definable in $\mathbb K$ with parameters $B$.

\begin{thm}\label{confiGroupIso}
Let $(G,\oplus)$ be a group definable in the structure $\mathbb K$ with parameters $B$. Assume $a,b\in G$ are such that $\dim_{\mathbb K}(a/B)=\dim_{\mathbb K}(b/B)=\dim(G)$. And $\dim_{\mathbb K}(ab/B)=2\dim(G)$.  Let $c=a\oplus b$. Assume moreover that there are $a', b' \in H$ with $\dim_{\mathbb K}(a'/B)=\dim_{\mathbb K}(b'/B)=\dim H$ and $\dim_{\mathbb K}(a'b'/B)=2\dim(H)$. Assume as well that $\dcl_{\mathbb K}(aB)=\dcl_{\mathbb K}(a'B)$, $\dcl_{\mathbb K}(bB)=\dcl_{\mathbb K}(b'B)$ and $\dcl_{\mathbb K}(cB)=\dcl_{\mathbb K}(c'B)$ where $c'=a'\otimes b'$. Then there is a $\mathbb K$-definable analytic,  local isomorphism between a neighborhood of the identity of $G$ and a neighborhood of the identity of $H$. That is, there is a neighborhood $U_1$ of the identity of $G$, a neighborhood $U_2$ of the identity of $H$ and an analytic invertible function $\phi:U_1\to U_2$ whose inverse is analytic such that if $u,v\in U_2$ are such that $u\oplus v\in U_1$, then $\phi(u\oplus v)=\phi(u)\otimes \phi(v)$.
\end{thm}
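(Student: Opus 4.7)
The plan is to run a Weil--Hrushovski--Pillay group-chunk reconstruction, using the analytic tools of Facts~\ref{implicitFunctionTheoremf}--\ref{identityTheoremf} to pass from generically defined data to an honest local isomorphism. First, from $\dcl_{\mathbb K}(aB)=\dcl_{\mathbb K}(a'B)$ (and the analogous equalities for $b$ and $c=a\oplus b$) I would extract $B$-definable partial bijections $f$, $g$, $h$ with $f(a)=a'$, $g(b)=b'$, $h(c)=c'$, each admitting a $B$-definable inverse. Rewriting $c'=a'\otimes b'$ yields the functional equation
\[
h(x\oplus y)=f(x)\otimes g(y),
\]
which holds at the generic independent pair $(a,b)$ and therefore (by Fact~\ref{decompositionLemma} together with the genericity hypotheses and saturation) on a relatively open dense $B$-definable subset of $G\times G$. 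In particular $\dim G=\dim(a/B)=\dim(a'/B)=\dim H$; call this common dimension $d$.

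Next, I would parametrize Zariski-dense open pieces of $G$ and $H$ by open subsets of $K^d$ by applying the implicit function theorem (Fact~\ref{implicitFunctionTheoremf}) to defining equations for the Zariski closures, arranging that $\oplus$, $\otimes$ and inversion are analytic in these charts. Invoking the generic analyticity lemma stated in the preliminaries for $f$, $g$, $h$ and shrinking, I would ensure that $f,g,h$ are analytic on small open neighborhoods of $a$, $b$, $c$, and that the functional equation above is an identity of analytic maps on a nonempty open subset of $K^{2d}$. By the identity theorem (Fact~\ref{identityTheoremf}) the equation then persists on the full connected open domain on which all the maps involved are defined and analytic.

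To move to the identity, fix a generic $b_0\in G$ and observe
\[
f(x)=h(x\oplus b_0)\otimes g(b_0)^{-1}.
\]
The right-hand side is analytic at every $x$ for which $x\oplus b_0$ lies in the analytic domain of $h$, so this extends $f$ analytically to a neighborhood of $e_G$; symmetrically, using a generic $a_0$, $g$ extends to a neighborhood of $e_G$. After composing $f$ and $g$ with suitable right-translations in $H$, normalize $f(e_G)=g(e_G)=e_H$. Evaluating the extended functional equation at $x=e_G$ gives $h=g$ near $e_G$, and at $y=e_G$ gives $h=f$ near $e_G$; hence $f=g=h$ on some open $U_1\ni e_G$, and the functional equation reads $f(x\oplus y)=f(x)\otimes f(y)$, making $f|_{U_1}$ a $\mathbb K$-definable analytic local homomorphism. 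Since $f$ agrees with a partial bijection on an open set, its formal Jacobian is generically nondegenerate, hence nondegenerate at $e_G$ after shrinking, so the inverse function theorem (Fact~\ref{inversionf}) produces an analytic inverse on a smaller neighborhood, giving the required $\phi$.

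The main obstacle I expect is the second step: setting up simultaneous analytic charts on the \emph{definable} groups $G$ and $H$ in which the group operations \emph{and} the maps $f,g,h$ are all analytic on a common open set, so that the functional equation becomes a genuine analytic identity to which Fact~\ref{identityTheoremf} applies. The generic-analyticity lemma delivers analyticity only pointwise; promoting this to a uniform open set requires careful use of the implicit function theorem, translation by generic elements, and the interaction of the Zariski and valuation topologies on the smooth locus of the Zariski closures of $G$ and $H$. Once that is in place, the remaining steps are formal manipulations of the functional equation combined with Facts~\ref{implicitFunctionTheoremf} and~\ref{inversionf}.
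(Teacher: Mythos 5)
Your overall framing (a Weil--Hrushovski--Pillay group-chunk argument bootstrapped by local analyticity) is the right one, and your first two steps — extracting $B$-definable partial bijections $f,g,h$ from the $\dcl$-equalities, then using implicit function theorem and generic analyticity to get an analytic identity $h(x\oplus y)=f(x)\otimes g(y)$ on a product $U\times V$ of small open sets around $(a,b)$ — match the paper's proof closely.

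The gap is in the ``move to the identity'' step. Your extension formula
\[
f(x)=h(x\oplus b_0)\otimes g(b_0)^{-1},\qquad b_0\in G \text{ generic},
\]
is supposed to redefine $f$ analytically on a neighborhood of $e_G$. For this to make sense you need simultaneously: (i) $b_0$ in the analytic domain of $g$, which is a small ball around $b$; (ii) $x\oplus b_0$ in the analytic domain of $h$, which is a small ball around $c=a\oplus b$. But when $x$ ranges over a neighborhood of $e_G$, the point $x\oplus b_0$ ranges over a neighborhood of $b_0$, so (ii) forces $b_0$ to lie near $c$, while (i) forces $b_0$ to lie near $b$. Unless $a$ is itself close to $e_G$ — which the hypotheses certainly do not guarantee — there is no such $b_0$, and the right-hand side is simply undefined for $x$ near $e_G$. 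The symmetric formula you propose for extending $g$ fails for the same reason. Consequently the downstream normalization ``$f(e_G)=g(e_G)=e_H$'' and the substitutions $x=e_G$, $y=e_G$ never get off the ground. The identity theorem will not rescue this: it propagates an identity between functions already analytic on a common connected domain; it does not enlarge the domain of a partial analytic function that simply is not defined near $e_G$.

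The paper circumvents this by never trying to extend $f$ itself. Instead it proves, from the functional equation on $U\times V$, the algebraic claim that if $a=x\oplus z^{-1}\oplus y$ with $x,y,z\in U$ then $f(a)=f(x)\otimes f(z)^{-1}\otimes f(y)$, and then \emph{defines} the local isomorphism directly by translation:
\[
\phi:\,U^{-1}\oplus a\longrightarrow (U')^{-1}\otimes a',\qquad \phi\bigl(x^{-1}\oplus a\bigr):=f(x)^{-1}\otimes a'.
\]
Here $U^{-1}\oplus a$ is a genuine open neighborhood of $e_G$, every value $\phi$ takes only requires evaluating $f$ on $U$ (where it is already analytic and invertible), and the homomorphism property $\phi(u\oplus v)=\phi(u)\otimes\phi(v)$ follows from the displayed claim. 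If you want to keep your ``extend $f$ to the identity'' narrative, the correct move is to replace your $f(x)=h(x\oplus b_0)\otimes g(b_0)^{-1}$ by something that stays inside the known analytic domains, for instance $\phi(u)=h(a\oplus u\oplus b)\otimes h(a\oplus b)^{-1}$ for $u$ near $e_G$, and then verify the homomorphism identity using the functional equation — but that verification is essentially the paper's claim, and it is not a formal consequence of plugging $e_G$ into the functional equation.
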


\begin{proof}
Our assumptions on $a$ and $a'$ implies that there is a formula $\phi(x,y)$ (with parameters in $B$) such that $\phi(a,a')$ holds and there is just one $x$ such that $\phi(x,a')$ holds and just one $y$ such that $\phi(a,y)$ holds. Our dimension assumptions on $a$ and $a'$ implies that there is an open neighborhood $U_1$ of $a$ and $U_2$ of $a'$ such that for all $x\in U_1$ there is just one $y$ such that $\phi(x,y)$ holds and for all $y\in U_2$ there is just one $x$ such that $\phi(x,y)$ holds. Therefore $\phi(x,y)$ defines the graph of a function $f:U_1\to H$ and the opposite defines a function $f^{-1}:U_2\to G$. 

Using again our dimension assumptions on $a$ and $a'$ we have that $a\in U_1$ and $a'\in U_2$. So we get open sets $U$ and $U'$ with $a\in U$ and $a'\in U'$  and an analytic invertible function $f:U\to U'$ with analytic inverse such that $f(a)=a'$. 

We can do the same thing for $b,b'$ and $c,c'$ finding open neighborhoods  $V$, $V'$, $W$ and $W'$ of $b,b',c$ and $c'$ respectivelly and $\mathbb K$-definable invertible functions $g:V\to V'$ and $h:W\to W'$ such that $g(b)=b'$ and $h(c)=c'$. 

Now as $f(a)\otimes g(b)=h(c)$ and $\dim(ab/B)=2\dim G$ there is an open neighborhood $Z$ of $(a,b)$ in $G^2$ such that for all $(x,y)\in Z$ one has that $f(x)\otimes g(y)=h(x\oplus y)$. Moreover we may shrink $U$ and $V$ to ensure that $U\times V\subseteq Z$. So for all $x\in U$ and $y\in V$ one has that $f(x)\otimes g(y)=h(x\oplus y)$. We may also shrink $U$ an $V$ to ensure that $U\oplus V\subseteq W$

Let $x,y$ and $z$ be elements of $U$ such that $a=x\oplus z^{-1}\oplus y$ we claim that $f(a)=f(x)\otimes f(z)^{-1}\otimes f(y)$, for this let $b_1=x^{-1}\oplus c$ and $c_1=z\oplus b_1$. Then as $a$ and $b$ are independent over $B$ one has that $b_1\in V$ and $c_1\in W$. Moreover $y\oplus b=c_1$. So because of previous observation one has:

$f(y)\otimes g(b)=h(c_1)$

$f(x)\otimes g(b_1)=h(c_1)$

$f(x)\otimes g(b_1)=h(c)$

Thus $f(x)\otimes f(z)^{-1}\otimes f(y)\otimes g(b)=f(x)\otimes g(b_1)=h(c)=f(a)\otimes g(b)$ and we get our claim.

Now define $\phi:U^{-1}\oplus a\to (U')^{-1}\otimes a'$ as $\phi(x^{-1}\oplus a)=(f(x))^{-1}\otimes a'$ we claim that it is a local isomorphism between $U_1:=U^{-1}\oplus a$ and $U_1':=(U')^{-1}\otimes a'$. For this let $u,v\in U_1$ such that $u\oplus v\in U_1$ we want to prove that $\phi(u\oplus v)=\phi(u)\otimes \phi(v)$. Suppose $u=x^{-1}\oplus a$ and $v=y^{-1}\oplus a$ with $x,y\in U$. Therefore $u\oplus v=z^{-1}\oplus a$ where $z^{-1}=x^{-1}\oplus a\oplus y^{-1}$ with $z\in U$. So using the claim just proved, we have that $f(z)^{-1}=f(x)^{-1}\otimes f(a)\otimes f(y)^{-1}$ and then $\phi(z^{-1}\oplus a)=f(z)^{-1}\otimes f(a)=f(x)^{-1}\otimes f(a)\otimes f(y)^{-1}\otimes f(a)=\phi(x^{-1}\oplus a)\otimes \phi(y^{-1}\oplus a)$ and we get the result.
\end{proof}

\chapter{Some facts of definable groups contained in $K$}\label{someFacts}

In this chapter we give some basics on $\mathbb K$-definable groups whose universe is contained in $K$. We also prove the main tool we will use in order to construct inerpretable fields, namely Theorem \ref{thmFieldGivenFamily}.

In the first section present some reductions that will be use in the second one. In the second section we prove Theorem \ref{thmFieldGivenFamily} that ensures the existence of an interpretable field under the assumption of some kind of families of curves. 

For the rest of the chapter we fix $(G,\oplus)$ a $\mathbb K$-definable group of dimension one and we assume $G\subseteq K$. Let $e$ be the neutral element of $G$. We assume that the map $(x,y)\mapsto x\oplus y^{-1}$ is continuous.

\begin{defi}
If $X\subseteq G^2$ is a $\mathbb K$-definable set we say that $X$ is $G$-affine if it is a boolean combination of cosets of $\mathbb K$-definable subgroups of $G^2$.
\end{defi}

We fix $X\subseteq G^2$ a non affine curve and assume that $\mathcal G=(G,+,X)$ is strongly minimal.

\section{Some reductions}\label{somePreparations}


In this section we give some basic definitions and reductions that we use in Chapter \ref{additiveCase} and \ref{multiplicativeCase}. The main result is Proposition \ref{assmGraphpC}. We would like to treat $X$ (generically) as the graph of a function using Fact \ref{implicitFunctionTheoremf} but in order to do so we need the following:

\begin{lemma}\label{pickCoordinateLemmaP}
 Let $F(x,y)\in K[x,y]$ be an irreducible polynomial and suppose that $F$ is not constant. If there are infinitely many points $(a,b)\in K^2$ such that 
 \begin{equation}
     F(a,b)= F_{y}(a,b)=0,
 \end{equation}
 then $F(x,y)=G(x,y^{p})$ for some polynomial $G$.
\end{lemma}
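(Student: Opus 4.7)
The plan is to combine B\'ezout's theorem with a degree comparison to force $F_y$ to vanish identically, and then read off the conclusion from the characteristic.

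First I would consider $F$ and $F_y$ as elements of $K[x,y]$ and ask whether they share a non-constant factor. If they did \emph{not}, then by Fact~\ref{bezoutTheorem} their common zero set in $K^2$ would be finite, contradicting the hypothesis that $F(a,b)=F_y(a,b)=0$ for infinitely many $(a,b)$. So some non-constant $D\in K[x,y]$ divides both $F$ and $F_y$. Since $F$ is irreducible, $D$ must be an associate of $F$, i.e.\ $F\mid F_y$ in $K[x,y]$.

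Next I would run a degree comparison. Writing $F=\sum_{i+j\le d}c_{ij}x^{i}y^{j}$ with $d=\deg F$, the formal partial derivative
\[
F_y(x,y)=\sum_{i+j\le d} j\, c_{ij}\, x^{i}y^{j-1}
\]
has total degree at most $d-1$. If $F_y\neq 0$ in $K[x,y]$, this contradicts $F\mid F_y$. Hence $F_y\equiv 0$ as a polynomial.

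Finally I would translate $F_y\equiv 0$ into the structural conclusion. Write $F(x,y)=\sum_{j\ge 0} a_j(x)\, y^{j}$ with $a_j\in K[x]$. Then $F_y=\sum_{j\ge 1} j\, a_j(x)\, y^{j-1}=0$ forces $j\cdot a_j(x)=0$ in $K[x]$ for every $j\ge 1$. In characteristic $p>0$ this means $a_j=0$ unless $p\mid j$, so only the monomials $y^{pk}$ survive, and setting $G(x,z):=\sum_{k\ge 0} a_{pk}(x)\, z^{k}$ yields $F(x,y)=G(x,y^{p})$ as required. (Had we been in characteristic zero, this step would conclude instead that $F$ depends only on $x$, matching the lemma's convention.)

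The argument is short and each step is standard. The only place that requires a bit of care is the degree comparison: one must use the \emph{total} degree bound on $F_y$ (not just the $y$-degree) to rule out $F\mid F_y$ with $F_y\neq 0$, since the total-degree inequality is what makes divisibility impossible. Everything else reduces to B\'ezout plus elementary algebra, so I do not anticipate any serious obstacle.
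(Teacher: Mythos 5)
Your proposal is correct and follows essentially the same path as the paper: both use B\'ezout to force a common factor of $F$ and $F_y$, invoke irreducibility of $F$ together with the degree drop $\deg F_y < \deg F$ to conclude $F_y \equiv 0$, and then read off from $j\,a_j = 0$ that only $p$-th powers of $y$ survive in positive characteristic. Your closing remark about needing \emph{total} degree is unnecessary caution — the $y$-degree comparison alone already rules out $F \mid F_y$ when $F_y \neq 0$ — but this does not affect the argument's validity.
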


\begin{proof}

Note first that $\deg(F_y)<\deg(F)$. So as $F$ is irreducible, if $F_y\neq 0$ we can use B\'ezout's Theorem to conclude that the number of common zeros of $F$ and $F_y$ equals $\deg F\cdot \deg F_y$. So if $F_y$ has infinitely many common zeros with $F$ it is because $F_y=0$.

 If char$(K)=0$ and $F_y=0$ using Fact \ref{identityTheoremf} we can see that $F(x,y)=kx+l=F(x,y^p)$ with $k,l\in K$.
 
So assume char $(K)=p>0$. Let $V\subseteq K^2$ the set of zeros for $F$. 


Assume $F_y$ has infinitely many zeros at $V$ so $F_y=0$. Write $$F(x,y)=f_0(x)+f_1(x) y +\ldots+f_n(x) y^n$$ with $f_i(x)\in K[x]$ for $i=0,1,\ldots, n$.  $$F_y(x,y)=f_1(x)+2f_2(x)y+\ldots+nf_n(x)y^{n-1}=0,$$ so for any $a$ and any $i=1,\ldots, n$ we have that $if_i(a)=0$. If some $f_i$ is different from zero we can take $a\in K$ such that $f_i(a)\neq 0$ and $if_i(a)=0$ implies that $i=0$ so $i=mp$ for some $m$. Therefore if $f_i$ is different from zero implies $p\mid i$ so $F(x,y)$ is a polynomial in the variable $y^p$.   
 \end{proof}
 
 As an immediate corolary we have:
 
 \begin{lemma}\label{pickCoordinateLemma}
 Assume $F(x,y)$ is an irreducible polynomial and let $V$ the set of common zeros of $F$ then there is some finite subset of $V$, $E$ such that either for all $a\in V\setminus E$, $F_x(a)\neq 0$ or for all $a\in V\setminus E$, $F_y(a)\neq 0$.
 
 \end{lemma}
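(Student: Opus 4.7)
The plan is to apply Lemma \ref{pickCoordinateLemmaP} to both partial derivatives of $F$ (using the symmetry of the statement in the two variables) and to derive a contradiction from irreducibility if both conclusions fail simultaneously.

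Suppose for contradiction that no finite $E \subseteq V$ works. Then both $F_x$ and $F_y$ vanish at infinitely many points of $V$. Lemma \ref{pickCoordinateLemmaP}, applied to $F$ with respect to the $y$-variable, yields $F(x,y)=G(x,y^{p})$ for some polynomial $G$; this means $F\in K[x]$ in characteristic $0$, and $F\in K[x,y^{p}]$ in characteristic $p>0$. Since the proof of Lemma \ref{pickCoordinateLemmaP} is symmetric in the two variables, the same argument with the roles of $x$ and $y$ swapped gives $F\in K[y]$ (char $0$) or $F\in K[x^{p},y]$ (char $p>0$).

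In characteristic $0$, combining the two inclusions forces $F$ to be constant, contradicting irreducibility. In characteristic $p>0$, combining gives $F(x,y)=\tilde F(x^{p},y^{p})$ for some polynomial $\tilde F(u,v)=\sum c_{ij}u^{i}v^{j}$. Using that $K$ is algebraically closed, pick $d_{ij}\in K$ with $d_{ij}^{\,p}=c_{ij}$. The Frobenius identity then yields
\[
F(x,y)\;=\;\sum d_{ij}^{\,p}\,x^{pi}y^{pj}\;=\;\Bigl(\sum d_{ij}\,x^{i}y^{j}\Bigr)^{p},
\]
which contradicts the irreducibility of $F$.

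Hence at least one of $F_x,F_y$ is not the zero polynomial, and by the B\'ezout-style argument already used in the proof of Lemma \ref{pickCoordinateLemmaP} (since $F$ is irreducible and the nonzero partial has strictly smaller degree than $F$, they share no common factor, so by Fact \ref{bezoutTheorem} they have only finitely many common zeros), its zero locus meets $V$ in a finite set $E$, which proves the claim. The only delicate point is the Frobenius step in positive characteristic, which depends essentially on algebraic closedness of $K$ to extract the $p$-th roots $d_{ij}$; the rest of the argument is formal bookkeeping with Lemma \ref{pickCoordinateLemmaP}.
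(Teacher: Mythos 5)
Your proof is correct and follows essentially the same strategy as the paper's: apply Lemma~\ref{pickCoordinateLemmaP} to both partial derivatives, combine to conclude $F\in K[x^{p},y^{p}]$ (or that $F$ is constant in characteristic zero), and then contradict irreducibility by exhibiting $F$ as a $p$-th power via the Frobenius. You spell out the characteristic-zero case and the coefficient $p$-th-root step more explicitly than the paper, but the underlying argument is the same.
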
 

\begin{proof}
Assume it is not the case, then, both $F_x$ and $F_y$ have infinitelly many zeros in $V$. So using Lemma \ref{pickCoordinateLemmaP} one can conclude that $F(x,y)=G(x^p,y^p)$ for some polynomial $G$ but then $F(x,y)=(\tilde G(x,y))^p$ for some polynomial $\tilde G$ and then $F$ is not irreducible.
\end{proof}

\begin{lemma}\label{assumptionLemma}

 There is a finite set $F\subseteq X$ such that for any $z=(z_1,z_2)\in X\setminus F$ either there is an open set $U'$ with $z_1\in U'$ and an analytic function $f:U'\to K$ such that $(x,f(x))\in X$ for all $x\in U'$ or there is $U''$ open with $z_2\in U''$ and an analytic function $g:U''\to K$ such that $(g(x),x)\in X$ for all $x\in U''.$
 \end{lemma}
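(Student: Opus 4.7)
The plan is to combine the decomposition lemma (Fact \ref{decompositionLemma}), the derivative-selection lemma (Lemma \ref{pickCoordinateLemma}), and the implicit function theorem (Fact \ref{implicitFunctionTheoremf}) to produce the required local parametrizations generically on $X$.

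First, I would apply Fact \ref{decompositionLemma} to write $X = X_1 \cup \cdots \cup X_n$, where each $X_i$ is a relatively open subset (in the valuation topology) of an irreducible Zariski closed set $V_i \subseteq K^2$. Since $X$ has dimension one, each $V_i$ is either a point or an irreducible plane curve. The union of those $X_i$ for which $V_i$ is a point is finite, and I place these points inside the exceptional set $F$. For each remaining index $i$, since $K[x,y]$ is a UFD and $V_i$ is an irreducible curve, the ideal $I(V_i)$ is principal, so $V_i$ is the vanishing locus of a unique (up to scalar) irreducible polynomial $F_i(x,y) \in K[x,y]$.

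Next, for each such $F_i$, Lemma \ref{pickCoordinateLemma} supplies a finite subset $E_i \subseteq V_i$ such that either $F_{i,y}(a) \neq 0$ for every $a \in V_i \setminus E_i$, or $F_{i,x}(a) \neq 0$ for every such $a$. I enlarge $F$ by adjoining all the $E_i$. Now take any $z = (z_1,z_2) \in X \setminus F$ and fix an index $i$ with $z \in X_i$; then $z \in V_i \setminus E_i$. In the case $F_{i,y}(z) \neq 0$, Fact \ref{implicitFunctionTheoremf} applied to $F_i$ (which is analytic everywhere, being a polynomial) yields open neighborhoods $U_0 \ni z_1$ and $W_0 \ni z_2$ and an analytic function $f : U_0 \to W_0$ such that
\[
V_i \cap (U_0 \times W_0) = \{(x, f(x)) : x \in U_0\}.
\]
Since $X_i$ is relatively open in $V_i$, I may shrink $U_0$ to an open $U' \ni z_1$ with $\{(x, f(x)) : x \in U'\} \subseteq X_i \subseteq X$, which is exactly the first alternative of the conclusion. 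The case $F_{i,x}(z) \neq 0$ is symmetric and yields the function $g$ in the second alternative.

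The only step that requires genuine care, rather than assembly, is ensuring that after the IFT gives me a graph parametrization of $V_i$ near $z$, I can shrink the neighborhood to stay inside $X_i$ (and hence inside $X$); this is precisely what the relative openness clause in Fact \ref{decompositionLemma} provides, and is the reason why I decomposed $X$ into pieces that are relatively open in their Zariski closures rather than working directly with $X$.
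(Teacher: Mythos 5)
Your proof is correct and follows essentially the same route as the paper: decompose $X$ via Fact \ref{decompositionLemma} into finitely many pieces relatively open in irreducible Zariski closed sets, apply Lemma \ref{pickCoordinateLemma} to each defining polynomial to exclude a finite bad set, invoke the implicit function theorem at the remaining points, and shrink the domain using relative openness to land inside $X$. The only cosmetic difference is that you are slightly more explicit about why each $V_i$ is cut out by a single irreducible polynomial and about how the shrinking step works, but the structure of the argument is identical.
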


\begin{proof}
Decompose $X=X_0\cup\ldots\cup X_n$ as in Fact \ref{decompositionLemma}  with $X_0$ finite and each $X_i$ in an open set of some Zarizki closed set $C_i$ for $i>0$. By adding the intersection points to $X_0$ we can take the union to be disjoint. As $C_i$ is the set of zeros of some irreducible polynomial $F_i$ we can apply Lemma \ref{pickCoordinateLemma} to each $F_i$,  either $ (F_i)_{x}$ or $(F_i)_{y}$ has just finitely many zeros at $C_i$. Call $E_i$ this finite set intersected with $X_i$ and put $E=X_0\cup E_1\cup \ldots\cup E_n$. If $z\in X\setminus F$ there is just one $i$ such that $z\in X_i$, and either $(F_i)_{x}(z)$ or $(F_i)_{y}(z)$ is different from zero. Suppose $(F_i)_{y}(z)\neq 0$ by \ref{implicitFunctionTheoremf} we get an open set $U$ with $z_1\in U$ and an analytic function $f:U'\to K$ such that $\{(x,f(x)):x\in U'\}\subseteq C_i$ as $X_i$ is an open subset of $C_i$. Replace  $U'$ by $U'\cap f^{-1}(X_i)$ if necessary, we get $\{(x,f(x)):x\in U'\}\subseteq X_i$. This proves the lemma.
\end{proof}

\begin{defi}
If $W\subseteq K^2$ is a one dimensional $\mathbb K$-definable set we can decompose $W=W_0\cup W_1\cup\ldots\cup W_n$ where $W_0$ is finite and for $i\geq 1$, $W_i$ is an open subset of some irreducible Zariski closed set $\tilde W_i$. If $a\in W_i$ we say that $\tilde W_i$ is a branch of $W$ at $a$. 

For $i>0$ suppose $\tilde W_i$ is the set of zeros of an irreducible polynomial $F_i$, then  

\[
\begin{array}{ll}
W^2:=& \{a\in W:a\in W_i,\ (F_i)_y(a)\neq 0,\ 1\leq i\leq n \},\text{ and }\\
W^1:=& \{a\in W:a\in W_i,\ (F_i)_x(a)\neq 0,\ 1\leq i\leq n \}.
\end{array}
\]

\end{defi}

\begin{defi}\label{defSlope}
Let $Z\subseteq K\times K$ be the set of zeros of some irreducible polynomial $f(x,y)$. If  $a\in Z$ we define: $$m_1(a,Z)=-f_y(a)/f_x(a)$$ and $$m_2(a,Z)=-f_x(a)/f_y(a).$$  We say that $m_2(a,Z)$ is the slope of $Z$ at $a$ and $m_1(a,Z)$ is the inverse slope of $Z$ at $a$ when they are both well defined.
\end{defi}


\begin{lemma}\label{lemmaSlope}
 Let $W\subseteq K\times K$ be a $\mathbb K$-definable set whose Zariski closure has algebraic dimension $1$. If $a=(a_1,a_2)\in W^2$ there is $U'$ open neighborhood of $a_1$ and an analytic function $f:U'\to K$ such that $f(a_1)=a_2$, $\{(x,f(x)):x\in U'\}\subseteq W$ and $m_2(a,\tilde W_i)=f'(a_1)$. Where $W_i$ is the only component such that $a\in W_i$.
\end{lemma}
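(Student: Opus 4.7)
The plan is straightforward: since $a \in W^2$, the hypothesis on $a$ gives us directly that $a$ lies on a unique component $W_i$ (an open subset of the irreducible Zariski closed set $\tilde W_i$), where $\tilde W_i$ is the zero set of an irreducible polynomial $F_i(x,y)$ satisfying $(F_i)_y(a) \neq 0$. This is exactly the hypothesis needed to invoke the Implicit Function Theorem (Fact \ref{implicitFunctionTheoremf}).

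First I would apply Fact \ref{implicitFunctionTheoremf} at the point $a = (a_1,a_2)$ to obtain an open neighborhood $U_1 \subseteq K$ of $a_1$, an open neighborhood $U_2 \subseteq K$ of $a_2$, and an analytic function $f:U_1 \to U_2$ with $f(a_1)=a_2$ such that, inside $U_1 \times U_2$, the zero set of $F_i$ coincides with the graph of $f$. In particular the graph of $f$ is contained in $\tilde W_i$. Since $W_i$ is an open subset of $\tilde W_i$ containing $a$, by continuity of $f$ I can shrink $U_1$ to a smaller open neighborhood $U'$ of $a_1$ so that $\{(x,f(x)) : x \in U'\} \subseteq W_i \subseteq W$. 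This gives the first two conclusions.

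For the slope computation, I would differentiate the identity $F_i(x, f(x)) = 0$, valid on $U'$, with respect to $x$. Using that $f$ is analytic and the chain rule for formal power series (justified by the identity theorem, Fact \ref{identityTheoremf}, which ensures the derivative of a zero power series is zero), we obtain
\[
(F_i)_x(x,f(x)) + (F_i)_y(x,f(x)) \cdot f'(x) = 0
\]
on $U'$. Evaluating at $x = a_1$ and using $(F_i)_y(a) \neq 0$, we may solve to get
\[
f'(a_1) = -\frac{(F_i)_x(a)}{(F_i)_y(a)} = m_2(a, \tilde W_i),
\]
which is precisely the required identification of the slope. There is no real obstacle here; the whole statement is essentially a packaging of the Implicit Function Theorem together with the definition of $m_2$, with the only minor care being the shrinking of $U_1$ to ensure the graph lands in $W$ rather than merely in $\tilde W_i$.
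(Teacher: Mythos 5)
Your proof is correct and follows essentially the same route as the paper: invoke the Implicit Function Theorem at $a$ using $(F_i)_y(a)\neq 0$, shrink the neighborhood to land the graph inside $W_i$, and differentiate $F_i(x,f(x))=0$ to read off $f'(a_1)=-(F_i)_x(a)/(F_i)_y(a)=m_2(a,\tilde W_i)$. The only difference is your explicit remark about justifying the chain rule via the identity theorem, which the paper leaves implicit.
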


\begin{proof}
Suppose $\tilde W_i$ is the zero set of $G_i(x,y)$, so $(G_i)_y(a)\neq 0$ so using Fact \ref{implicitFunctionTheoremf} there are open sets $U_1$ and $U_2$ with $a\in U_1\times U_2$; and there is also a function $f:U_1\to U_2$ analytic at $a$ and an open set $U'\subseteq U_1\times U_2$ such that $$\Gamma:=\{u\in U':G_i(u)=0\}=\{(x,f(x)): x\in U_1\};$$ as $W\cap \tilde W_i$ is open in $\tilde W_i$ we can assume $\Gamma\subseteq W$. Now as $G_i(x,f(x))=0$ in a neighborhood of $a_1$, if we derive with respect to $x$ the function $G_i(x,f(x))$ we get $(G_i)_x+ (G_i)_y f'(x)=0$ so $f'(a_1)=-(G_i)_x(a)/(G_i)_y(a)=m_2(a,\tilde W_i)$.
\end{proof}

\begin{prop}\label{assmGraphpC}

There is a set $Y\subseteq G\times G$ definable in $(G,\oplus,X)$ such that: The $\dim Y=1$ and for each $(x_0,y_0)\in Y$ there is an analytic function $g$ defined in a neighborhood $U_0$ of $x_0$ and some natural number $n$ such that $\text{Fr}^{-n}(g(x_0))=y_0$ and $$\left(z,\text{Fr}^{-n}\left(g(z)\right)\right)\in Y$$ for each $z\in U_0$. Moreover, for $x_0=e$ we can take $n=0$; that is, there is an analytic function $h$ defined in an open neighborhood $U$ of $e$ such that $(x,h(x))\in Y$ for all $x\in U$.

\end{prop}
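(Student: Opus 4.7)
The plan is to take $Y$ to be a translate of $X$ (or, if necessary, of its coordinate-transpose) in the first $G$-coordinate, chosen so that a point where the $f$-form of Lemma \ref{assumptionLemma} holds sits above $e$. All the Frobenius bookkeeping on a general branch is then contained in Lemma \ref{pickCoordinateLemmaP}.

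Concretely, I would first apply Lemma \ref{assumptionLemma} to obtain a finite exceptional set $F\subseteq X$ such that every point of $X\setminus F$ admits either an $f$-form or a $g$-form analytic local graph. Split into two cases. If some $(a_0,b_0)\in X\setminus F$ admits the $f$-form, take $X':=X$; otherwise every point of $X\setminus F$ admits only the $g$-form, so the coordinate-swap $X^T:=\{(y,x):(x,y)\in X\}$ admits the $f$-form throughout $X^T\setminus F^T$, and we set $X':=X^T$, picking any $(a_0,b_0)$ in its $f$-form locus. (The swap is genuinely necessary in the second case: at an only-$g$-form point one has $g'(b_0)=-F_y/F_x=0$, so the inverse function theorem cannot be applied to recover the $f$-form directly, and we would be stuck with $n\geq 1$ at $(e,b_0)$.) In either case set
\[
Y:=\{(x\oplus a_0^{-1},y):(x,y)\in X'\setminus F'\},
\]
where $F'$ is the finite union of the exceptional set for $X'$ with the singular locus of each irreducible branch of $X'$. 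Then $Y$ is $(G,\oplus,X)$-definable over $a_0$, $\dim Y=1$, and $(e,b_0)\in Y$.

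To verify the analytic graph at $(e,b_0)$: the $f$-form at $(a_0,b_0)$ provides an analytic $f$ with $(x,f(x))\in X'$ for $x$ near $a_0$ and $f(a_0)=b_0$. Using that $\oplus$ is analytic in a neighborhood of $(a_0,e)$ (obtained from the lemma that $\mathbb K$-definable functions are generically analytic together with group translations to propagate analyticity), the composition $h(u):=f(u\oplus a_0)$ is analytic at $e$ with $h(e)=b_0$ and $(u,h(u))\in Y$ for $u$ near $e$. This is the required graph with $n=0$.

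For a general $(x_0,y_0)\in Y$, let $(x_0\oplus a_0,y_0)$ be the corresponding point in $X'\setminus F'$, lying on a branch $C_i$ cut out by an irreducible polynomial $F_i$. If the $f$-form holds there, the same translation argument yields the graph with $n=0$. Otherwise only the $g$-form holds, which by the B\'ezout-theoretic argument inside Lemma \ref{pickCoordinateLemmaP} forces $(F_i)_y\equiv 0$ on $C_i$; iterating that lemma (each iterate remains irreducible, since a nontrivial factorization would descend to one of $F_i$) one writes $F_i(x,y)=G(x,y^{p^n})$ with $n\geq 1$ maximal and $G_u\not\equiv 0$, where $u=y^{p^n}$. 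The Implicit Function Theorem (Fact \ref{implicitFunctionTheoremf}) applied to $G$ at the smooth point $(x_0\oplus a_0,y_0^{p^n})$ then yields an analytic $\psi$ with $y^{p^n}=\psi(x)$ on the branch, so $y=\text{Fr}^{-n}(\psi(x))$; translating, $g(z):=\psi(z\oplus a_0)$ together with $n$ supplies the desired data at $(x_0,y_0)$. The principal obstacles are propagating generic analyticity of $\oplus$ to the specific neighborhood of $(a_0,e)$ that we need (so that $u\mapsto u\oplus a_0$ is analytic) and carrying out the iterative Frobenius extraction uniformly on each branch; the remaining steps are routine applications of the implicit function theorem and the decomposition of $X$ into irreducible branches given by Fact \ref{decompositionLemma}.
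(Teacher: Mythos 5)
Your overall strategy matches the paper's: run Lemma \ref{assumptionLemma}, swap coordinates if necessary so that some base point $(a_0,b_0)$ admits the $f$-form, translate so that this base point sits above $e$, and at a general point either read off the analytic graph directly or use the Frobenius reduction of Lemma \ref{pickCoordinateLemmaP}. The difficulty is in your choice of the exceptional set $F'$, and it causes the step at a general point of $Y$ to fail.

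You take $F'$ to be the exceptional set from Lemma \ref{assumptionLemma} together with the singular loci of the branches. That is not enough. Take a branch $C_i$ on which $(F_i)_y$ is \emph{not} identically zero but does have zeros (only finitely many, by irreducibility and B\'ezout). The set $E_i$ produced in the proof of Lemma \ref{assumptionLemma} is the zero set of whichever of the two partials is chosen to be nonvanishing off $E_i$, and nothing forces that choice to be $(F_i)_y$. If it happens to be $(F_i)_x$, then a point $z\in C_i$ with $(F_i)_y(z)=0$ but $(F_i)_x(z)\neq 0$ lies outside $E_i$, and it also lies outside the singular locus (since $(F_i)_x(z)\neq 0$), so its translate survives into your $Y$. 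At such a $z$ the $f$-form fails, and your inference ``only the $g$-form holds $\Rightarrow (F_i)_y\equiv 0$ on $C_i$'' is false: $(F_i)_y\not\equiv 0$, so Lemma \ref{pickCoordinateLemmaP} gives you nothing. And in fact the branch has a vertical tangent at $z$ (think of $x=y^2$ at the origin in characteristic $\neq 2$), so $y$ is not locally of the form $\text{Fr}^{-n}(g(x))$ for any $n$, and the conclusion of the proposition genuinely fails there. The paper avoids this by building a fresh exceptional set per branch, consisting exactly of the zeros of $(G_i)_y$ when that set is finite, and of the zeros of the Frobenius-reduced $G_y$ when $(G_i)_y\equiv 0$. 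You need to enlarge $F'$ in the same way: on each branch, throw in the finitely many zeros of $(F_i)_y$ when $(F_i)_y\not\equiv 0$, and the finitely many zeros of $G_y$ when $(F_i)_y\equiv 0$. With that change your implication becomes valid and the rest of your argument goes through.

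A smaller point: you translate only the first coordinate, so your $h$ has $h(e)=b_0$ rather than $h(e)=e$. The proposition as phrased does not insist on $h(e)=e$, but the paper's $Y''=Y'_{y'}$ shifts both coordinates so that $(e,e)\in Y''$, and the later power-series manipulations in Chapter \ref{additiveCase} (the expansion in Claim \ref{xIsGoodFamily}, for instance) tacitly assume $h(e)=e$. It costs nothing to shift the second coordinate as well, and it keeps you aligned with what is used downstream.
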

 \begin{proof}
 
 Let $F\subseteq X$ be as in Lemma \ref{assumptionLemma}. Let $x=(x_1,x_2)\in X\setminus F$ so by the conclusion either $F\setminus X$ or $(F\setminus X)^{-1}$ contains the graph of a function. Define $Y'$ as $X\setminus F$ if it contains the graph of an analytic function converging in a neighborhood of $x_1$ or as $(X\setminus F)^{-1}$ if it doesn't. Define $y'=(x_1,x_2)$ if $Y'=F\setminus X$ and $y'=(x_2,x_1)$ if $Y'=(F\setminus X)^{-1}$. So if $y'=(y_1,y_2)$ there is an analytic function converging in a neighborhood of $y_1$ whose graph is contained in $Y'$.
 
 Now let $Y''=Y'_{y'}$, so there is an analytic function $h$ converging in a neighborhood of $e$ whose graph is contained in $Y''$.

 Write $Y''=Y''_0\cup \cdots \cup Y''_n$ and let $G_i$ be a polynomial whose set of zeros is the Zariski closure of $Y''_i$. 
 
 If $(G_i)_y(x,y)$ has just finite zeros at $Y''_i$ then define $E_i$ as this set of zeros.
 
  If $(G_i)_{y}$ is zero for infinitely many points of $Y''_i$,  then, by Lemma \ref{pickCoordinateLemmaP} $G_i(x,y)$ is a function in the variable $y^{p^n}$ for some $n$, that is $G_i(x,y)=G(x,y^{p^n})$ for some polynomial $G$ and some natural number $n$. Let $k$ be the maximum among those possible $n$ and let $G$ be the corresponding polynomial. Therefore $G(x,y)$ is not a function in the variable $y^p$ so the derivative $G_y$ has just finitely many zeros in $Y_i$. Let $E_i$ this finite set. Then for each $(x_0,y_0)\in Y_i\setminus E_i$ there is an analytic function $g$ defined in some neighborhood of $x_0$ such that $G_i(z,g(z))=0$ for each $z$ in that neighborhood. So if we define $\text{Fr}^k:K\to K$ as $\text{Fr}^k(x)=x^{p^k}$ and $\text{Fr}^{-k}$ it's inverse, $F_i(z, \text{Fr}^{-k}(g(z)))=0$ so $Y''_i$ contains the graph of $z\mapsto \text{Fr}^{-k}(g(z))$ in some neighborhood of $x_0$.
  
  Therefore if we define $E=Y''_0\cup E_1\cup \ldots E_n $ then $Y=Y''\setminus E$ satisfies conclusion of proposition.

\end{proof}

For the rest of this section we fix the following notation:

Let $Y$ be as provided by Proposition \ref{assmGraphpC}. Decompose $Y=Y_1\cup\ldots\cup Y_n$ with $Y_i$ some open subset of a Zariski closed set $C_i$. And let $F_i$ be an irreducible polynomial whose zero set is $C_i$.

\begin{defi}
If $a\in Y^2$ let $Y'(a)=m_2(a,C_i)$ where $C_i$ is the only branch of $Y$ at $a$.
\end{defi}

The following is Fact 3.8 of \cite{KR}, although the proof is just to apply usual $\epsilon-\delta$ formula.
\begin{fact}\label{derivativeDefinible} 
Let $h$ be as in Proposition \ref{assmGraphpC}, then the function $z\mapsto h'(z)$ with domain $U$ is definable in $\mathbb{K}$.
\end{fact}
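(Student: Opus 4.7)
The plan is to write down the derivative of $h$ using the standard $\varepsilon$–$\delta$ definition of a limit, where $\varepsilon$ and $\delta$ now range over the value group $\Gamma$ and closeness is measured via the valuation $v$. Since $\Gamma$ and $v$ are part of the two-sorted language $\mathcal{L}_{R,v}$, and since $h$ (as produced by Proposition \ref{assmGraphpC} via the implicit function theorem applied to a $\mathbb{K}$-definable set) is itself $\mathbb{K}$-definable, the resulting formula will define the graph of $h'$ inside $U\times K$.

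Concretely, for $z\in U$ and $L\in K$, I would claim that $h'(z)=L$ if and only if
\[
\forall \gamma\in\Gamma\ \exists \delta\in\Gamma\ \forall w\in U\ \Bigl(w\neq z \wedge v(w-z)>\delta \ \Longrightarrow\ v\bigl((h(w)-h(z))(w-z)^{-1}-L\bigr)>\gamma\Bigr).
\]
Every ingredient of this formula—membership in $U$, the function $h$, the field operations, the valuation $v$, and quantification over $\Gamma$—is available in $\mathcal{L}_{R,v}$ with parameters in $\mathbb{K}$, so the set
\[
\Gamma_{h'}:=\{(z,L)\in U\times K : \text{the formula above holds}\}
\]
is $\mathbb{K}$-definable. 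Projecting onto the first coordinate and reading off the unique witnessing $L$ for each $z\in U$ then defines the function $z\mapsto h'(z)$.

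What remains is to check that this $\varepsilon$–$\delta$ statement really does pick out the formal derivative of $h$ at $z$, i.e.\ that the limit exists and equals $h'(z)$ as computed from the power series expansion. But this is immediate from analyticity: writing $h(w)=\sum_{n\geq 0}a_n(w-z)^n$ in a neighborhood of $z$, one has
\[
\frac{h(w)-h(z)}{w-z}-a_1\;=\;\sum_{n\geq 2}a_n(w-z)^{n-1},
\]
and the valuation of the right-hand side tends to $+\infty$ as $v(w-z)\to\infty$ by standard estimates on convergent power series in $\mathbb{K}$ (cf.\ the ultrametric triangle inequality applied term-by-term). Hence the only $L$ that can make the displayed formula true is $L=a_1=h'(z)$.

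No real obstacle is anticipated: the argument is essentially a translation of the classical $\varepsilon$–$\delta$ definition into the valuation topology, the only subtlety being that convergence of the difference quotient is verified via the power series expansion guaranteed by analyticity of $h$. Once this is in place, definability of the graph of $h'$ follows from Fact \ref{QEf} (or just from the fact that $v$ and $\Gamma$ belong to the language), and hence $z\mapsto h'(z)$ is $\mathbb{K}$-definable.
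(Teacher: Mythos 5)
Your proof is correct and takes exactly the route the paper alludes to: the paper gives no argument beyond the remark that "the proof is just to apply usual $\epsilon$--$\delta$ formula," citing Fact~3.8 of \cite{KR}. Your write-up fills in precisely that sketch—the valuation-topology $\varepsilon$--$\delta$ formula for the difference quotient, plus the observation that analyticity (and the ultrametric estimate on the tail of the power series) forces the unique witnessing $L$ to be $h'(z)$—so there is nothing to compare beyond noting you have supplied the details the paper omits.
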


\begin{defi}\label{AdditionAndCompositionDef}
If $V,W$ are subsets of $G^2$ and $a=(a_1,a_2)\in V$ we define:

\begin{itemize}
    \item $V\oplus W=\{(x,y\oplus z):(x,y)\in V \text{ and } (x,z)\in W\}$
    \item $V\setminus W=\{(x,y\cdot z^{-1}):(x,y)\in V\text{ and }(x,z)\in W\}$
    \item $V\circ W =\{(x,y):\exists z  (x,z)\in W\wedge (z,y)\in V\}$
    \item $V_a=\{(v_1\oplus a_1^{-1},v_2\oplus a_2^{-1}):(v_1,v_2)\in V\}$
\end{itemize}
\end{defi}

For the rest of the Chapter we will make the following assumption on $G$:

\begin{assm}\label{assmOperationSumDerivative}
 For each $U\subseteq K$ open set with $e\in U\subseteq G$ and each pair of analytic functions $f,g:U\to K$ with $f(U)\cup g(U)\subseteq G$ such that $f(e)=g(e)=e$, if we define $h(x)=f(x)\cdot g(x)$ then $h$ is analytic and $h'(e)=f'(e) + g'(e)$. And if we define $i(x)=(f(x))^{-1}$ (the inverse in $G$) then $i'(e)=-f'(e)$
\end{assm}

Note that if $G=(B,+)$ is a subgroup of $(K,+)$ or $G=(K\setminus\{0\},\cdot)$, then $G$ satisfies Assumption \ref{assmOperationSumDerivative}.

\begin{lemma}\label{additionAndCompositionLemma} 

If $a,b\in Y^2$ there is a branch $Z$ of $Y_a\oplus Y_b$ at $(e,e)$ such that $$m_2((e,e),Z)=Y'(a)+Y'(b).$$ There is also a branch $\hat Z$ of $Y_a\circ Y_b$ at $(e,e)$ such that $$m_2((e,e),\hat Z)= Y'(a) Y'(b).$$

\end{lemma}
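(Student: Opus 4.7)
The plan is to locally analytify $Y$ at $a$ and $b$ via Lemma~\ref{lemmaSlope}, translate the resulting parametrizations to $(e,e)$ using the group operation, and then read off the slopes of the required branches by elementary derivative computations. The sum case will use Assumption~\ref{assmOperationSumDerivative}, while the composition case will use the ordinary chain rule.

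In detail, since $a,b\in Y^2$, Lemma~\ref{lemmaSlope} provides open neighborhoods $U_a\ni a_1$ and $U_b\ni b_1$ together with analytic functions $f_a:U_a\to K$ and $f_b:U_b\to K$ such that $f_a(a_1)=a_2$, $f_b(b_1)=b_2$, whose graphs lie in $Y$, and with $f_a'(a_1)=Y'(a)$, $f_b'(b_1)=Y'(b)$. I would then define
\[
 g_a(x)\,:=\,f_a(x\oplus a_1)\oplus a_2^{-1},\qquad g_b(x)\,:=\,f_b(x\oplus b_1)\oplus b_2^{-1},
\]
which are analytic on some neighborhood of $e$, satisfy $g_a(e)=g_b(e)=e$, and whose graphs lie in $Y_a$ and $Y_b$ respectively; these parametrize branches of $Y_a$ and $Y_b$ through $(e,e)$.

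For the first assertion, set $h(x):=g_a(x)\oplus g_b(x)$. Since $g_a(e)=g_b(e)=e$, Assumption~\ref{assmOperationSumDerivative} applies and gives $h'(e)=g_a'(e)+g_b'(e)$. The graph of $h$ is contained in $Y_a\oplus Y_b$ near $(e,e)$, so taking $Z$ to be the branch of $Y_a\oplus Y_b$ carrying this graph, Lemma~\ref{lemmaSlope} yields $m_2((e,e),Z)=h'(e)$. For the second assertion, set $\hat h(x):=g_a(g_b(x))$. Then $\hat h(e)=g_a(g_b(e))=g_a(e)=e$, and the usual chain rule for analytic functions in one variable gives $\hat h'(e)=g_a'(e)\,g_b'(e)$. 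The graph of $\hat h$ lies in $Y_a\circ Y_b$, so the branch $\hat Z$ it cuts out satisfies $m_2((e,e),\hat Z)=g_a'(e)\,g_b'(e)$.

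The remaining, and principal, step is the identification $g_a'(e)=Y'(a)$, and likewise for $b$. This is the statement that the slope of a plane curve is preserved under the translation $\tau_a:v\mapsto(v_1\oplus a_1^{-1},v_2\oplus a_2^{-1})$ sending $a$ to $(e,e)$. In the additive case $(K,+)$ this is immediate because $\tau_a$ is $K$-affine, but in general one must compute the derivative of $x\mapsto f_a(x\oplus a_1)\oplus a_2^{-1}$ at $x=e$ and show it equals $f_a'(a_1)$. This is where Assumption~\ref{assmOperationSumDerivative} (together with the analyticity of the group operation at the identity) does the work, effectively playing the role of the Lie-algebra fact that the derivative of left translation at $e$ is the identity. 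Once this identification is in hand, the two formulas $m_2((e,e),Z)=Y'(a)+Y'(b)$ and $m_2((e,e),\hat Z)=Y'(a)\,Y'(b)$ follow immediately from the derivative computations above.
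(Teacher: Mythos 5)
Your approach is essentially the paper's: obtain analytic local parametrizations of $Y$ at $a$ and at $b$, translate them to $(e,e)$, and read off the slopes of $Y_a\oplus Y_b$ and $Y_a\circ Y_b$ from Assumption~\ref{assmOperationSumDerivative} (for the sum) and the chain rule (for the composition). The only cosmetic difference is that you invoke Lemma~\ref{lemmaSlope} to get separate parametrizations $f_a$, $f_b$, whereas the paper takes one analytic $h$ whose graph sits in $Y$ near both $a_1$ and $b_1$ and then forms $h_a$, $h_b$; your version is a bit more careful on that point.

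One caveat concerns the step you single out, $g_a'(e)=Y'(a)$. This does \emph{not} follow from Assumption~\ref{assmOperationSumDerivative}: that assumption governs only pairs of analytic functions that both send $e$ to $e$, and the constant translation $y\mapsto y\oplus a_2^{-1}$ does not, so the assumption cannot be used to differentiate $g_a(x)=f_a(x\oplus a_1)\oplus a_2^{-1}$. In the additive case the identity $g_a'(e)=f_a'(a_1)$ is exactly the $K$-affinity observation you already make (translations have derivative $1$), and that covers every place the paper actually applies this lemma. In a genuinely multiplicative $G$ one computes $g_a'(1)=f_a'(a_1)\,a_1/a_2$, which is not $Y'(a)$ in general, so the lemma as literally stated should be read with that restriction in mind. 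The paper's own proof is equally silent on this point (it simply says to ``use Assumption~\ref{assmOperationSumDerivative}''), so the slip is shared; but attributing the translation-invariance of the slope to Assumption~\ref{assmOperationSumDerivative} is not a correct justification.
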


\begin{proof}
Write $a=(a_1,a_2)$ and $b=(b_1,b_2)$. 
Let $h:U\to K$ be an analytic function whose graph is contained in $Y$ with $a_1,b_1\in U$. Then the graph of the function $h_a(x):=h(x\oplus a_1)\oplus a^{-1}_2$ is contained in $Y_a$. Similarly the graph of $h_b(x):=h(x\oplus b_1)\oplus b^{-1}_2$ is contained in $Y_b$. So the graph of $x\mapsto h_a(x)\oplus h_b(x)$ is contained in $Y_a\oplus Y_b$ and we can take $Z$ as the Zariski closure of such graph and use Assumption \ref{assmOperationSumDerivative}. In the same way, the graph of $h_a\circ h_b$ is contained in $Y_a\circ Y_b$.
\end{proof}
We can therefore define:
\begin{defi}
For $a,b\in Y^{2}$ let $$(Y_a\oplus Y_b)'((e,e))=m_2((e,e),Z)$$ and
$$(Y_a\circ Y_b)'((e,e))=m_2((e,e),\hat Z)$$ where $Z$ and $\hat Z$ are as in Lemma \ref{additionAndCompositionLemma}
\end{defi}


 We will also need a technical lemma:
 
 \begin{lemma}\label{noIsolated}
 Let $V$ and $W$ be one dimensional $\mathbb K$-definable subsets of $G^2$. Suppose moreover that they do not have isolated points, then $V\oplus W$  does not have isolated points. If, moreover, we assume that $W$ does not contain infinitely many points in any horizontal line and $V$ does not contain infinitely many points in any vertical line, then the composition $V\circ W$ does not have isolated points.
 \end{lemma}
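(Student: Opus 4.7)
The plan is to prove both assertions by exhibiting, for every $p$ in $V\oplus W$ (respectively $V\circ W$), a one-parameter analytic family of points of the set accumulating at $p$, so that $p$ cannot be isolated. The key tool is that a one-dimensional $\mathbb{K}$-definable set with no isolated points admits a local analytic parametrization by one of its coordinates near any of its points: this follows from Fact \ref{decompositionLemma}, which writes the set as a finite union of pieces relatively open in irreducible curves, combined with Lemma \ref{pickCoordinateLemma} and Fact \ref{implicitFunctionTheoremf}, which supply a graph presentation at all but finitely many points of each piece (and one reduces to such points by density).

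For the first assertion, I fix $p=(x_0,y_0\oplus z_0)$ with $(x_0,y_0)\in V$ and $(x_0,z_0)\in W$ and pick analytic parametrizations of local branches of $V$ at $(x_0,y_0)$ and of $W$ at $(x_0,z_0)$, each of the form $x\mapsto(x,f(x))$ or $y\mapsto(h(y),y)$. When both are graphs $y=f_V(x)$ and $z=f_W(x)$, the family $x\mapsto(x,f_V(x)\oplus f_W(x))$ lies in $V\oplus W$ and passes through $p$. When $V$'s branch is $(h(y),y)$ with $h(y_0)=x_0$ and $W$'s branch is $(x,f_W(x))$, the family $(h(y),y\oplus f_W(h(y)))$ works and is non-trivial near $y_0$ because either $h$ varies (first coordinate moves) or $h\equiv x_0$ (second coordinate moves via $y$). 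The remaining combinations, including two vertical branches at $x_0$ contributing $\{x_0\}\times (\text{neighborhood of }y_0\oplus z_0)$ to $V\oplus W$, are handled symmetrically.

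For the second assertion, I parametrize a branch of $W$ at $(x_0,z_0)$ as $s\mapsto(a(s),b(s))$ and a branch of $V$ at $(z_0,y_0)$ as $t\mapsto(c(t),d(t))$. The composition $V\circ W$ contains the image of the matching locus $\{b(s)=c(t)\}$ under $(s,t)\mapsto(a(s),d(t))$. The hypotheses on horizontal and vertical line content of $V$ and $W$ are precisely what is needed to ensure that this matching locus is a one-dimensional analytic set and that its image is a genuine one-parameter family through $p$: they rule out the pathological configuration in which the middle-coordinate maps $b,c$ and the outer maps $a,d$ conspire to make $(a(s),d(t))$ constant on the matching locus.

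The main obstacle will be the case analysis enumerating which of the parametrizing coordinate maps are constant, and verifying non-triviality of the produced family in each combination, including degenerate branches at singular points. Singular points can be circumvented by passing to nearby regular points of the branch (dense by Lemma \ref{pickCoordinateLemma}) and using continuity of $\oplus$; in positive characteristic the parametrizations may only be analytic after absorbing a Frobenius power, as in Proposition \ref{assmGraphpC}, but this adds only bookkeeping and does not affect the overall structure of the argument.
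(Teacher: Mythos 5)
Your overall plan---produce points of $V\oplus W$ (resp.\ $V\circ W$) accumulating at $p$ by parametrizing local branches and combining them---is the same idea as the paper's, but the execution diverges in a way that opens two genuine gaps.

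\textbf{Singular points.} You propose to parametrize each branch analytically via the implicit function theorem, and to handle the finitely many singular points by ``passing to nearby regular points'' and invoking continuity of $\oplus$. For $V\oplus W$ this step does not obviously work as stated: if $(x_0,y_0)\in V$ is singular, choosing a nearby regular point $(x_1,y_1)\in V$ does not automatically yield a point $(x_1,z_1)\in W$ with the \emph{same} first coordinate $x_1$ close to $z_0$. One needs the parametrizations of $V$ and $W$ to be \emph{coherent} (both expressed as roots over a common varying $x'$). The paper sidesteps this entirely: for a branch $\{F=0\}$ of $V$ through $(a,c)$, it applies continuity of roots (Fact~\ref{continuityOfRootsTheoremf}) to $F$ directly, which works whether or not $(a,c)$ is a regular point of the curve---one only needs $F(a,\cdot)\not\equiv 0$. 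The same is done for $W$ over the same $a'$, which yields matching $(a',c')\in V$ and $(a',d')\in W$ and hence $(a',c'\oplus d')\in V\oplus W$. The degenerate situation $F(a,\cdot)\equiv 0$ is handled separately by observing that then $V$, and hence $V\oplus W$, contains infinitely many points on the vertical line through $a$. Your approach via the implicit function theorem is not wrong in spirit, but it creates an extra singular-point obstacle that the paper's continuity-of-roots formulation avoids for free, and your stated remedy for that obstacle leaves a real hole.

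\textbf{Degeneracy in the composition.} You claim that the hypotheses (``$W$ has no horizontal lines, $V$ has no vertical lines'') ``rule out the pathological configuration in which the middle-coordinate maps $b,c$ and the outer maps $a,d$ conspire to make $(a(s),d(t))$ constant.'' They do not: they rule out $b$ and $c$ being constant (so the matching locus is one-dimensional), but they place no restriction on $a$ or $d$ being constant. Concretely, if the branch of $W$ at $(x_0,z_0)$ is vertical ($a\equiv x_0$, which is permitted since the hypothesis only forbids \emph{horizontal} lines in $W$) and the branch of $V$ at $(z_0,y_0)$ is horizontal ($d\equiv y_0$, permitted since the hypothesis only forbids \emph{vertical} lines in $V$), then $(a(s),d(t))\equiv(x_0,y_0)$ and your one-parameter family collapses to the point $p$. (The example $W=\{x_0\}\times K$, $V=K\times\{y_0\}$ satisfies all the stated hypotheses and gives $V\circ W=\{(x_0,y_0)\}$.) To be fair, the paper's own proof of the composition part asserts that the constructed $(a',b')$ are distinct from $(a,b)$ without justifying it, so it implicitly runs into the same issue; but your proposal elevates this to an explicit (and false) claim that the hypotheses already take care of it, which they do not. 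Any correct proof must either strengthen the hypotheses or supply an additional argument for why both outer coordinates cannot be simultaneously constant in the intended applications.
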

 
 \begin{proof}
 
 If $(a,b)\in V\oplus W$ there are $z$, $w$ such that $(a,c)\in V$, $(a,d)\in W$ and $b=c\oplus d$. Take an open sets $B_1$ containing $a$ and $B_2$ containing $b$. As $\oplus$ is continuous one can find $D$ and $E$ open sets containing $c$ and $d$ respectively such that $D\oplus E \subseteq B_2$. 
 
 Given that $(a,c)\in V$ is not isolated there is a polynomial $F$ such that $F(a,c)=0$ and for all $(a',c')$ in some neighborhood of $(a,c)$ if $F(a',c')=0$ then $(a',c')\in V$. If the polynomial $F_a(x):=F(a,x)$ is not zero we can use continuity of roots and get $B$, a neighborhood of $a$ (we can assume $B\subseteq B_1$) such that and for each $a'\in B$ there is $c'\in D$ with $F(a',c')=0$ and $(a',c')\in V$. In the same way (by restricting $B$ if necessary) for each $a'\in B$ there is $d'\in E$ such that $(a',d')\in W$ so for any such $a'$ the point $(a',c'\oplus d')\in (V\oplus W)\cap (B_1\times B_2)$ and then $(a,b)$ is not isolated in $V\oplus W$. 
 
 Note that if one of $F(a,\_)$ is the zero polynomial then $V$ contains infinitely many points in the vertical line $\{a\}\times K$, and so with $V\oplus W$, so that any open containing $(a,b)$ contains points in such a line. Similarly for $W$.
 
We will now prove that $V\circ W$ doesn't contain any isolated point. Let $(a,b)\in V\circ W$ so there is $c$ with $(a,c)\in V$ and $(c,b)\in W$.
 If $B_1$ and $B_2$ are open sets containing $a$ and $b$ respectively, as in the addition one can find $B$ containing $c$ such that for each $c'\in B$ there is $a'\in B_1$ such that $(a',c')\in Y_b$ (using that the closure of $W$ doesn't contain any horizontal line) and some $b'\in B_2$ such that $(c',b')\in V$ (using that the closure of $V$ doesn't contain any vertical line) so that for each such $c'$ the point $(a',b')\in (V\circ W)\cap(B_1\times B_2)$ and $(a,b)$ is not an isolated point of $V\circ W$.
 \end{proof}
 
 
\section{Good families of curves}\label{goodFamiliesOfCurves}
In this section we define the notion of a good family of curves for $\mathcal G$ and we show how it can be used to interpret a field in $\mathcal G$. This is done in Theorem \ref{thmFieldGivenFamily}.

\begin{defi}\label{defGoodFamily}
A good family of curves definable in $\mathcal G$ is given by the following data:
\begin{enumerate}
    \item A $\mathcal G$-definable set $Y\subseteq G\times G$ and a $\mathcal G$-definable family of curves $(X_a)_{a\in Y}$.
    \item $U\subseteq G$, an open neighborhood of $e$, and an analytic function $H(x,s):U\times U\to G$.
    \item An analytic function $h:U\to G$.
\end{enumerate}
Such that:
\begin{enumerate}
    
    \item For all $s\in U$ if we define $\bar s:=(s,h(s))$ then $\bar s\in Y$.
    
    
    \item Each $X_a$ is a one dimensional subset of $K^2$ with no isolated points.
    \item For all $s,x\in U$, $(x,H(x,s))\in X_{\bar s}$.
    
    \item $H(e,s)=e$ for all $s\in U$.
    \item The set of derivatives $\displaystyle\{H_x (e,s):s\in U\}$ is infinite.
    
    \item For each $s\in U$ and $(x,y)\in X_{\bar s}$ there is a neighborhood $V_x$ of $x$ and a neighborhood $U_s\subseteq U$ of $s$, an analytic function $\Phi(z,\delta)$ defined in $V_x\times U_s$ and a natural number $n$ such that for all $(w,\delta)\in V_x\times U_s$ one has that $(w,\text{Fr}^{-n}(\Phi(w,\delta)))\in X_{\bar \delta}$.
\end{enumerate}
\end{defi}

 We will dedicate the rest of the section to proving:
 
 \begin{thm}\label{thmFieldGivenFamily}
If there is a good family of curves definable in $\mathcal G$ then $\mathcal G$ interprets a field. 
\end{thm}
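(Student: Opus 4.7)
The plan is to construct a $2$-dimensional group configuration in $\mathcal G$ and then invoke Fact~\ref{fieldConfig} to obtain an interpretable field. This follows the general Hrushovski strategy (cf.~\cite{HS, KR}): use a $2$-parameter family of plane curves to produce a $2$-dimensional group acting on a strongly minimal set, which for $d=2$ forces the existence of a field.

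The key data supplied by Definition~\ref{defGoodFamily}: each $X_{\bar s}$ passes through $(e,e)$ and is locally the graph of $x \mapsto H(x,s)$ with derivative $H_x(e,s)$ at $e$, and by property~5 these tangent directions range over an infinite definable set. Combined with the $G$-action by translation on both coordinates (Definition~\ref{AdditionAndCompositionDef}), this produces a genuinely $2$-dimensional definable family of germs of functions $G \to G$, one dimension coming from the base point of the translate and one from the tangent parameter $s$. Lemma~\ref{additionAndCompositionLemma} shows that $\oplus$ and $\circ$ on curves induce addition and multiplication of tangent directions at $(e,e)$; this is the algebraic skeleton that will ultimately furnish the field. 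Property~6 is what guarantees that composed and translated curves actually stay in a definable family in $\mathcal G$, rather than escaping to some larger definable class.

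To build the configuration, I would pick generic independent $a_1 \in G$ and $s_1, s_2 \in U$ over a base $A$ containing the parameters of $\mathcal G$. Let $b_3$ be the parameter of a generic translate of $X_{\bar s_1}$ sending $a_1 \mapsto a_2$ for some $a_2 \in G$, let $b_1$ be a similar translate of $X_{\bar s_2}$ sending $a_2 \mapsto a_3$, and let $b_2$ be the parameter of the $\circ$-composition sending $a_1 \mapsto a_3$, which exists in our family by property~6 and Lemma~\ref{additionAndCompositionLemma}. Each $a_i$ has Morley rank $1$ (as a generic point of $G$), and each $b_i$ has rank $2$ (base point plus tangent). Genericity of the choices ensures the resulting $6$-tuple $(a_1, a_2, a_3, b_1, b_2, b_3)$ is a reduced group configuration of dimension $2$, so that Fact~\ref{groupConfig} produces a $2$-dimensional interpretable group $\Gamma$ acting faithfully on a strongly minimal set.

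The main obstacle is verifying the rank equalities in the group configuration, and in particular the condition $\RM(b_1, b_2, b_3) = 6$, since at first glance $b_2$ looks determined by $b_1$ and $b_3$. The key point is that $b_1$ and $b_3$ only constrain $b_2$ to satisfy $b_2(a_1) = a_3$, which is a one-dimensional condition on the $2$-dimensional family of germs, so $b_2$ retains one free tangent parameter; this is exactly the statement that the stabilizer of a point in the $2$-dimensional ``germ group'' is $1$-dimensional. The incidence equalities $\RM(b_i, a_j, a_k) = 3$ reduce to showing that fixing the source and target of a germ of our family leaves exactly one tangential degree of freedom, which again uses property~5 (for the second dimension) and property~6 (for definability of the constrained subfamily). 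Once these verifications are complete, Fact~\ref{groupConfig} delivers a $2$-dimensional interpretable group acting faithfully on a strongly minimal set, and Fact~\ref{fieldConfig} then produces the desired infinite field interpretable in $\mathcal G$.
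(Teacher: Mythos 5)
Your outline has the right shape---build a $2$-dimensional group configuration and invoke Facts~\ref{groupConfig} and \ref{fieldConfig}---but there are two gaps, and they sit precisely where the work of the proof lies. First, the claim that the composed curve sending $a_1\mapsto a_3$ ``exists in our family by property~6 and Lemma~\ref{additionAndCompositionLemma}'' is false: Lemma~\ref{additionAndCompositionLemma} only says the composition of two curves in the family has a branch at $(e,e)$ whose slope is the product of the slopes, and property~6 is an analytic regularity statement about individual curves $X_{\bar s}$. Neither puts the composed curve back into the family $(X_a)_{a\in Y}$, and in general it is not there; so the parameter $b_2$ you are trying to pick out is not defined, and the crucial dependence $b_2\in\acl(b_1,b_3)$ is not available this way. (Note also that your reading $\RM(b_1,b_2,b_3)=6$ would make the three $b_i$ independent, which is the opposite of what a group configuration requires; the intended constraint is $\RM(b_1,b_2,b_3)=2d=4$.)

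Second---and this is the real content of the theorem---the rank equalities must be verified with Morley rank computed in the reduct $\mathcal G$, not in $\mathbb K$. The dependences you describe are visible in $\mathbb K$ via the analytic structure, but nothing automatic guarantees they survive in $\mathcal G$, where far fewer sets are definable; ``genericity of the choices ensures the resulting $6$-tuple is a reduced group configuration'' is exactly the step that cannot be waved away. The paper's proof is organized entirely around this obstacle. It does not place points of $G$ into the configuration at all; instead, Lemmas~\ref{tupla5dim} and \ref{defiDerivadaConf} produce tuples $\alpha,\beta,\gamma,p,q,r$ of elements of $U$ whose slopes $Y'(\cdot)$ realize the affine group action on a copy of $K$, and the key $\mathcal G$-dependence (for instance $q\in\acl_{\mathcal G}(\alpha,p)$) is detected by an intersection count: the family $Z_\delta$ of Equation~(\ref{equationDefZ1}), built from $\ominus$ and $\circ$ of curves in the family, is $\mathcal G$-definable, and Claims~\ref{claimIntSube} and \ref{claimNoPierdoCeros}---using continuity of roots (Fact~\ref{continuityOfRootsTheoremf}) and the identity theorem (Fact~\ref{identityTheoremf})---show that the number of points in $Z_\delta^{e}$ strictly increases when $\delta$ moves off $q$. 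That jump is the $\mathcal G$-definable feature that puts $q$ into $\acl_{\mathcal G}(\alpha,p)$. Without an argument of this type your proposed tuple cannot be certified to be a group configuration in $\mathcal G$, so the proof does not go through as written.
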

 
So for  the rest of this section we fix $Y$, $(X_a)_{a\in Y}$, $U$, $H$ and $h$ as in the data of a good family of curves.

\begin{lemma}\label{tupla5dim}
There exists a ball $B\subseteq K$ with center $t$ contained in $\{H_x(e,s):s\in U\}$ and $\mathfrak a=(a_1,a_2,b_1,b_2,b)\in K^5$ with $\dim \mathfrak a=5$ such that $$\{t a_1, \ t+a_2 , \ tb_1,\  t+b_2,\  t a_1 a_2 ,\  t+a_1b_2+a_2,\  t+b, \ t+a_1 b+a_2,\  t+a_1 b_1 b + b_1 a_2 +b_2\}\subseteq B$$.

\end{lemma}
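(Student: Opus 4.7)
My plan proceeds in three steps. First, I would establish that $S := \{H_x(e,s) : s \in U\}$ contains a valuation ball. By Fact \ref{derivativeDefinible}, $S$ is $\mathbb{K}$-definable, and by clause 5 of Definition \ref{defGoodFamily} it is infinite. Quantifier elimination (Fact \ref{QEf}) then forces $S$ to contain a valuation-open subset of $K$, since the only infinite Zariski-closed subset of $K$ is $K$ itself; hence $S \supseteq B_{\gamma_0}(t_0)$ for some $t_0, \gamma_0$.

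Second, I would analyze which ball $B = B_\gamma(t) \subseteq S$ makes the nine inequalities consistent. Writing each listed element as $t + f_i(a_1,a_2,b_1,b_2,b)$, the requirement becomes $v(f_i) > \gamma$ for every $i$. The ``multiplicative'' inequalities $v(t(a_1-1)), v(t(b_1-1)), v(t(a_1 a_2 - 1)) > \gamma$ force $a_1, b_1$ and $a_1 a_2$ close to $1$ with precision $\gamma - v(t)$, while the ``additive'' ones $v(a_2), v(b_2), v(b), v(a_1 b_2 + a_2), v(a_1 b + a_2), v(a_1 b_1 b + b_1 a_2 + b_2) > \gamma$ force the additive parts close to $0$. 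Combining $v(a_2) > \gamma$ with $v(a_1 a_2 - 1) > \gamma - v(t)$ shows the system is consistent only when $v(t) > \gamma$, i.e.\ $0 \in B$. So I would exploit that $s \mapsto H_x(e,s)$ is non-constant and analytic --- using Fact \ref{inversionf} or a Taylor-expansion argument to conclude its image is open and not bounded away from $0$ --- to choose $s_0 \in U$ with $H_x(e, s_0)$ of sufficiently large valuation and then a sub-ball of $S$ around it witnessing $v(t) > \gamma$.

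Third, with such $B = B_\gamma(t) \subseteq S$ fixed, the $\mathbb{K}$-definable set $W \subseteq K^5$ defined by the nine valuation inequalities is non-empty: the trivial tuple $(1,0,1,0,0)$ yields the nine values $(t,t,t,t,0,t,t,t,t) \subseteq B$, using $0 \in B$. Being a non-empty valuation-open subset of $K^5$, $W$ has Zariski closure equal to $K^5$ and so has algebraic dimension $5$; by saturation of $\mathbb{K}$ I may pick $\mathfrak{a} = (a_1,a_2,b_1,b_2,b) \in W$ algebraically independent over the parameter set naming $B, U, H, h$, so that $\dim \mathfrak{a} = 5$. The main obstacle is the second step --- upgrading from ``some ball in $S$'' to ``a ball in $S$ containing $0$'' --- since pure QE only delivers the former, and one needs the analytic structure of $H_x(e,\cdot)$ to ensure its image meets arbitrarily small neighborhoods of $0$.
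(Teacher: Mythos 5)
Your proposal runs into a genuine error in its second step, and the source of the difficulty is worth unpacking because it reveals a typo in the lemma itself.

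You correctly observe that, taking the displayed set literally, the fifth element $ta_1a_2$ creates tension: the constraint $v(a_2)>\gamma$ coming from $t+a_2\in B$ forces $a_1a_2$ away from $1$ (since $a_1$ must be $v$-close to $1$), so $v(a_1a_2-1)=0$ and $ta_1a_2\in B$ forces $v(t)>\gamma$, i.e.\ $0\in B$. Your diagnosis of the arithmetic is right. But your proposed repair --- arguing that because $s\mapsto H_x(e,s)$ is non-constant and analytic its image is ``not bounded away from $0$'' --- does not hold in a non-archimedean field. Openness of the image gives you a ball, but a ball in $K$ is perfectly capable of avoiding a neighborhood of $0$ entirely (think of $s\mapsto 1+s$ on a small disc: the image is a ball around $1$ that never approaches $0$). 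Nothing in the data of a good family of curves guarantees that $0$ lies in the closure of $\{H_x(e,s):s\in U\}$, so this step cannot be completed as you describe.

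The actual resolution is that $ta_1a_2$ in the displayed set is a typo for $ta_1b_1$. This is confirmed by Lemma~\ref{defiDerivadaConf}, whose statement reads $Y'(\gamma_1)=ta_1b_1$, and whose one-line proof cites exactly this display from Lemma~\ref{tupla5dim}; the nine expressions are simply the required values of $Y'$ at the six configuration points (and their ``halves''), and $\gamma_1$ corresponds to $a_1b_1$, not $a_1a_2$. With that correction the paper's proof is very short and does not need $0\in B$ at all: evaluate the nine expressions at the degenerate tuple $(a_1,a_2,b_1,b_2,b)=(1,0,1,0,0)$ and every one of them equals $t$, which lies in $B$. Since addition and multiplication are continuous, there is a valuation-open neighborhood $W$ of $(1,0,1,0,0)$ in $K^5$ on which all nine expressions remain in $B$; a non-empty open subset of $K^5$ has algebraic dimension $5$, so by saturation one picks $\mathfrak{a}\in W$ with $\dim\mathfrak{a}=5$. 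Your first and third steps (QE to produce a ball inside the definable infinite set, and the dimension-$5$ count via a non-empty open set) are correct and agree with the paper; it is only the detour through $0\in B$ that needs to be abandoned once the typo is recognized.
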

\begin{proof}
As $\{H_x(e,s):s\in U\}$ is infinite and $\mathbb K$-definible, it contains an open ball say $B\subseteq G$ with $0\neq t=H_x(e,d)\in B$ for some $d\in U$.
Then  $\mathfrak a\in K^5$ exists because $B$ is a non empty open and both addition and multiplication are continuous functions.

\end{proof}

For $s\in U$ define $Y'(s)=H_x(e,s)$. 
So we have:

\begin{lemma}\label{defiDerivadaConf}

Let $\mathfrak a$ as in Lemma \ref{tupla5dim}, then there are tuples:
$\alpha=(\alpha_1,\alpha_2),\ \beta=(\beta_1,\beta_2)$ and $\gamma=(\gamma_1,\gamma_2)$ contained in $U^2$ such that:

\begin{itemize}

\item $Y'(\alpha_1)=t a_1$, $Y'(\alpha_2)=t+a_2$, 

\item $Y'(\beta_1)=t b_1$, $Y'(\beta_2)=t+b_2$, and

\item $Y'(\gamma_1)=t a_1 b_1$, $Y'(\gamma_2)=t+a_1b_2+a_2$.
\end{itemize}
There is also a triple $(p,q,r)\in U^3$ such that: 

\begin{itemize}
\item $Y'(p)=t+b$,

\item $Y'(q)=t+a_1b+a_2$, and

\item $Y'(r)=t+a_1 b_1 b+b_1 a_2 + b_2$.
\end{itemize}
\end{lemma}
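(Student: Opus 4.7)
The proof is essentially a direct application of Lemma \ref{tupla5dim} combined with the observation that $B \subseteq \{H_x(e,s) : s \in U\} = \{Y'(s) : s \in U\}$, so every element of $B$ has at least one preimage under $Y'|_U$.

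The plan is as follows. First, I fix the ball $B$ and the tuple $\mathfrak{a} = (a_1,a_2,b_1,b_2,b)$ supplied by Lemma \ref{tupla5dim}, so that each of the nine prescribed scalars
\[
ta_1,\ t+a_2,\ tb_1,\ t+b_2,\ ta_1b_1,\ t+a_1b_2+a_2,\ t+b,\ t+a_1b+a_2,\ t+a_1b_1b+b_1a_2+b_2
\]
lies in $B$. By the choice of $B$, for each such scalar $v$ there exists at least one $s_v \in U$ with $Y'(s_v) = H_x(e,s_v) = v$.

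Next, I simply assign names to nine such preimages: pick $\alpha_1, \alpha_2 \in U$ with $Y'(\alpha_1) = ta_1$ and $Y'(\alpha_2) = t+a_2$; pick $\beta_1, \beta_2 \in U$ with $Y'(\beta_1) = tb_1$ and $Y'(\beta_2) = t+b_2$; pick $\gamma_1, \gamma_2 \in U$ with $Y'(\gamma_1) = ta_1b_1$ and $Y'(\gamma_2) = t+a_1b_2+a_2$; and pick $p, q, r \in U$ with $Y'(p) = t+b$, $Y'(q) = t+a_1b+a_2$, and $Y'(r) = t+a_1b_1b+b_1a_2+b_2$. Setting $\alpha = (\alpha_1,\alpha_2)$, $\beta = (\beta_1,\beta_2)$, $\gamma = (\gamma_1,\gamma_2)$ then gives the desired tuples in $U^2$, and $(p,q,r) \in U^3$ is already in hand.

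There is no real obstacle here; the lemma is really just a repackaging of Lemma \ref{tupla5dim}, isolating the information that will be used later to exhibit a group configuration. The only thing worth remarking is that the choices of $\alpha_i, \beta_i, \gamma_i, p, q, r$ need not be canonical and do not need to depend definably on $\mathfrak{a}$, since the statement only asserts existence of such tuples; saturation of $\mathbb{K}$ (or just the surjectivity of $Y'|_U$ onto $B$) is all that is required.
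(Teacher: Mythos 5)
Your proof is correct and matches the paper's own argument: the paper likewise notes that the nine scalars lie in $B \subseteq \{H_x(e,s):s\in U\}$ and simply picks preimages under $s \mapsto Y'(s) = H_x(e,s)$.
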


\begin{proof}
It just follows from the fact that $$\{t a_1, \ t+a_2 , \ tb_1,\  t+b_2,\  t a_1 a_2 ,\  t+a_1b_2+a_2,\  t+b, \ t+a_1 b+a_2,\  t+a_1 b_1 b + b_1 a_2 +b_2\}\subseteq B$$ and $$B\subseteq \{H_x(e,s):s\in U\}.$$
\end{proof}
For the rest of the section fix $B$, $\mathfrak a=(a_1,a_2,b_1,b_2,b)$ and $G_1=(\alpha,\beta,\gamma,p,q,r)$ as before.

\begin{prop}
$G_1$ is a field configuration for $\mathcal M$. 
\end{prop}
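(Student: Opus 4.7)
The plan is to verify each Morley rank condition in the definition of a field (i.e., $2$-dimensional group) configuration by translating $G_1$, through the $\mathbb K$-definable slope function $Y'$ (Fact \ref{derivativeDefinible}) and the fixed scalar $t$, into polynomial expressions in the algebraically independent five-tuple $\mathfrak a=(a_1,a_2,b_1,b_2,b)$ (with $\dim\mathfrak a=5$ by Lemma \ref{tupla5dim}). Concretely, $\alpha$ becomes interalgebraic (over the fixed parameters) with $(a_1,a_2)$, $\beta$ with $(b_1,b_2)$, $\gamma$ with $(a_1b_1,\,a_1b_2+a_2)$, $p$ with $b$, $q$ with $a_1b+a_2$, and $r$ with $a_1b_1b+b_1a_2+b_2$. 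Because $\mathcal G$ is strongly minimal and the curve operations $\oplus,\circ$ of Definition \ref{AdditionAndCompositionDef} are $\mathcal G$-definable and realize, via Lemma \ref{additionAndCompositionLemma}, addition and multiplication of $Y'$-slopes, every polynomial dependence among these images is witnessed by a $\mathcal G$-algebraic dependence. Hence the Morley rank in $\mathcal G$ of any subtuple of $G_1$ matches the algebraic dimension (in $\mathbb K$) of its image under the translation.

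Starting from this dictionary, the remaining verifications reduce to explicit dimension counts. The individual ranks $\RM(\alpha)=\RM(\beta)=\RM(\gamma)=2$ and $\RM(p)=\RM(q)=\RM(r)=1$ are immediate, and pairwise independence of any two entries follows by checking that their images in $\mathfrak a$ span the expected number of independent coordinates; for instance, $\RM(\alpha,\beta)=\dim(a_1,a_2,b_1,b_2)=4$ and $\RM(\alpha,p)=\dim(a_1,a_2,b)=3$. The three key incidences that encode the affine-group action are then the heart of the argument: $\RM(\alpha,p,q)=3$ follows from the identity $q=a_1\cdot p+a_2$, placing $q\in\dcl(\alpha,p)$; symmetrically $\RM(\beta,q,r)=3$ follows from $r=b_1\cdot q+b_2$; and $\RM(\gamma,p,r)=3$ follows by rewriting $r=(a_1b_1)\cdot b+(b_1a_2+b_2)$ and identifying $\gamma$ as coding the affine-group product $\beta\circ\alpha$, so that both coordinates of $\gamma$ deliver exactly the slope and intercept appearing on the right-hand side.

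The main obstacle is precisely this last incidence, where one must reconcile the intercept recorded by $\gamma_2$ with the intercept $b_1a_2+b_2$ produced by the composition of $\beta$ after $\alpha$, and more fundamentally one must certify that all of these $\mathbb K$-polynomial relations descend to $\mathcal G$-algebraic closure. The descent is supplied by the good-family structure itself: from the curves $X_{\bar\alpha_1},X_{\bar\alpha_2},X_{\bar\beta_1},X_{\bar\beta_2},X_{\bar p}$ and the operations $\oplus,\circ$, Lemma \ref{additionAndCompositionLemma} yields a curve in the family whose slope at $(e,e)$ realizes any required affine combination of the underlying parameters, and this curve witnesses the corresponding algebraic dependence inside $\mathcal G$. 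Once the individual ranks, pairwise independence, and three incidences are all in place, $G_1$ satisfies the axioms of a $2$-dimensional group configuration, hence is a field configuration, as claimed.
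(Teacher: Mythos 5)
The proposal has a genuine gap at the crux of the argument. You assert the ``dictionary'' principle that ``every polynomial dependence among these images is witnessed by a $\mathcal G$-algebraic dependence,'' and use it to close all three incidence relations, but this is exactly what needs to be \emph{proved}, not assumed. One direction is cheap: $\mathcal G$-definable sets are $\mathbb K$-definable and the slope function $Y'$ is $\mathbb K$-definable (Fact \ref{derivativeDefinible}), so $\RM_{\mathcal G}$ of any subtuple is bounded above by the $\mathbb K$-dimension of its image. That handles all the \emph{independence} relations, exactly as in the paper. But the \emph{dependence} relations -- e.g.\ $q\in\acl_{\mathcal G}(\alpha,p)$ -- cannot be read off from the $\mathbb K$-identity $tY'(q)=Y'(\alpha_1)Y'(p)-tY'(\alpha_1)+tY'(\alpha_2)$, because $Y'$ is not a priori $\mathcal G$-definable and the identity lives in $\mathbb K$. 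The fact that Lemma \ref{additionAndCompositionLemma} produces a curve whose slope realizes the right affine combination does not by itself show $q$ lies in a finite $\mathcal G$-definable set over $\alpha,p$.

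The paper closes this gap by a concrete intersection-theoretic argument that your proposal omits entirely. It builds the $\mathcal G$-definable family $Z_\delta=(X_{\alpha_1}\circ X_p)\ominus(X\circ X_{\alpha_1})\oplus(X\circ X_{\alpha_2})\ominus(X_\delta\circ X)$, fibers it over $e$ to get $Z_\delta^e$, and shows (Claim \ref{claimIntSube}) that for $\delta\neq q$ near $q$ the intersection count $|Z_\delta^e|$ strictly exceeds $|Z_q^e|$ -- whence $\{\delta:|Z_\delta^e|\le|Z_q^e|\}$ is finite by strong minimality, pinning $q$ into $\acl_{\mathcal G}(\alpha,p)$. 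This requires: (i) the tangency $F_x(e,q_1)=0$ forced by your identity so that continuity of roots (Fact \ref{continuityOfRootsTheoremf}) splits one root into at least two for $\delta\neq q$; (ii) showing $F_x(e,\cdot)$ vanishes only finitely often, which uses clause 5 of Definition \ref{defGoodFamily} and the identity theorem; and (iii) Claim \ref{claimNoPierdoCeros}, showing that the other intersection points of $Z_q^e$ persist under perturbation, which demands a separate argument handling the Frobenius twists $\text{Fr}^{-n}$ appearing in clause 6 of the good-family definition (the curves need not be graphs of analytic functions, only of $\text{Fr}^{-n}\circ(\text{analytic})$). None of these steps appear in your proposal, and without them the descent from $\mathbb K$-algebraicity to $\mathcal G$-algebraicity is unjustified.
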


\begin{proof}

Using Fact \ref{derivativeDefinible} the $\mathcal G$-independence relations follows, so we only need to prove $\mathcal G$-dependence relations.

Let us prove for example that $q\in \text{acl}_{\mathcal G}(\alpha,p)$.

We have  
$$Y'(q)=t+a_1 b + a_2 = t + t^{-1} Y'(\alpha_1)(Y'(p)-t)+Y'(\alpha_2)-t.$$

Multiplying by $t$ we get:

\begin{equation}\label{pIsAlge}
t Y'(q)= Y'(\alpha_1)Y'(p)-t Y'(\alpha_1)+t Y'(\alpha_2).
\end{equation}

Let $d\in U$ such that $H_x(e,d)=t$. Call $X=X_{(d,h(d))}$ and define the family of curves

\begin{equation}\label{equationDefZ1}
Z_\delta:=(X_{\alpha_1}\circ X_{p})\ominus (X\circ X_{\alpha_1})\oplus (X\circ X_{\alpha_2})\ominus (X_{\delta}\circ X)
\end{equation}
 for $\delta\in Y$. 
 
 Here the $-$ is taken as in Definition \ref{AdditionAndCompositionDef}.
 
For $\delta\in Y$ define $$Z_\delta^{e}=\{x\in G:(x,e)\in Z_\delta\}$$ Since the family $(Z_\delta)_{\delta\in Y}$ is $\mathcal G$-definable with parameters $\alpha$ and $p$  it is enough to show that $$\{\delta\in Y:|Z_{\delta}^{e}|\leq|Z_{q}^{e}|\}$$ is finite. As we may assume that Morley degree of $Y$ computed in $\mathcal G$ is $1$, it is enough to show:

\begin{claim}\label{claimIntSube}
There is an open neighborhood $W$ containing $q$ such that for all $\delta \in W\setminus \{q\}$, $|Z_\delta^{e}|>|Z_q^{e}|$
\end{claim}
For this, list $Z_q^{e}=\{e=x_1,x_2,\ldots,x_n\}$ 
 
 Define the function $$F:U\times U_2\to K$$
 $$F(x,s)=H(H(x,p),\alpha_1) \oplus H(H(x,\alpha_1),d)^{-1} \oplus H(H(x,\alpha_2),d)\oplus H(H(x,d),s)^{-1}$$
 
 Where $U_2\subseteq U$ is some neighborhood of $q_1$ contained in $U$ such that everything is well defined.
 
 If $\delta\in U_2$ then the graph of the function $F(\_,\delta)$ is contained in $Z_\delta$ in a neighborhood of $0$.
 
 Using Equation (\ref{pIsAlge}) it is easy to see that  $F_x(e,q_1)=0.$ Therefore the function $F(\_,q_1)$ has a zero of multiplicity $d\geq 2$ at $x=e$. Apply Fact \ref{continuityOfRootsTheoremf} to the function $F$ and get open sets $V$ and $W$ with $(e,q_1)\in V\times W\subseteq U\times U_2$ such that for all $y\in W$ the function $F(\_,y)$ has exactly $d$ roots (counting multiplicities) at $V$.
 
Note that $F_x(e,s)=0$ just for finitely many $s\in U$. That is because 
$$F(x,s)=T(x)\oplus H(H(x,d),s)^{-1}$$
where $T(x)$ is a function depending just on $x$. Therefore $$F_x(e,s)=T'(e)+H_x(H(e,d),s)H_x(e,d)=T'(e)+H_x(e,s)t,$$
here we are using that $H(e,d)=e$. Thus if $F_x(e,s)$ is zero for infinitely many $s$ then $H_x(e,s)=-T'(e)/t$ for all such $s$ and then as the function $H_x(e,s)$ is $\mathbb K$ definable, Fact \ref{QEf} tells us that $\{s:H_x(e,s)=0\}$ contains an open ball and Fact \ref{identityTheoremf} implies that $s\mapsto H_x(e,s)$ is a constant function  on $U$ but this contradicts clause 5 of the definition of a good family of curves.

Therefore we may assume that for each $s\in W\setminus\{q_1\}$ the function $F(\_,s)$ has a simple zero at $x=e$.

Because of our choice of $V$ and $W$, for each $s\in W$ the function $F(\_,s)$ has at least two zeros at $V$ (counting multiplicities) and if $s\neq q_1$, as $F(\_,s)$ has only simple roots, there are two different points $c_1,c_2\in V$ such that $F(c_i,s)=0$ for $i=1,2$. Thus, for any $\delta\in W\setminus\{q_1\}$ there are at least two different $x\in V$ such that $x\in Z^{0}_{\bar\delta}$. 

\begin{claim}\label{claimNoPierdoCeros}
For $i>1$ take pairwise disjoint open sets $V_i$ containing $x_i$. Remember that $x_i$ are the elements of $Z_q^{e}$. Then we can find open sets $W_i\subseteq U$ with $(x_i,q_1)\in V_i\times W_i$ such that for each $s\in W_i$ there is at least one $x\in Z_{s}^0\cap V_i$.
\end{claim}

\begin{proof}(Proof of Claim \ref{claimNoPierdoCeros})
Call $$P:=(X_{\alpha_1}\circ X_{p})\ominus (X\circ X_{\alpha_1})\oplus (X\circ X_{\alpha_2}).$$ 
So there is some $y_i$ such that $$(x_i,y_i)\in P \cap X_q\circ X.$$ Then there is some $z_i$ such that $(x_i,z_i)\in X$ and $(z_i,y_i)\in X_q$.

We can find am analytic function $\Phi_1(x,s)$ and a natural number $n_1$ such that $$\left(x,\text{Fr}^{-n_1}(\Phi_1(x,s))\right)\in X_s$$ for all $x$ in some neighborhood of $x_i$ and all $s$ in a neighborhood of $e$. In the same way we can find $\Phi_2$ and $n_2$ such that $$\left(z,\text{Fr}^{-n_2}(\Phi_2(z,s))\right)\in X_s$$ for all $z$ in a neighborhood of $z_i$ and all $s$ in a neighborhood of $q$.

For $j=1,2$ let $\Psi_j=\text{Fr}^{-n_j}\circ \Phi_j$.

Therefore $$(x,\Psi_2(\Psi_1(x,d),q))\in X_q \circ X$$ for all $x$ in some neighborhood of $x_i$.  Moreover if $\delta$ is close enough to $q_1$ then the graph of $$x\mapsto \Psi_2(\Psi_1(x,d),\delta)$$ is contained in $X_\delta \circ X$.

Call $$\Psi(x,s)=\Psi_2(\Psi_1(x,d),s)$$.

As $(x_i,y_i)\in P$ and $P$ has no isolated points there is a polynomial $G(x,y)$ such that for $(x,y)$ close enough to $(x_i,y_i)$ if $G(x,y)=0$ then $(x,y)\in P$.

So we would be done if we are able to prove that for all $\delta$ close enough to $q_1$ there is some $x\in V_i$ such that $$G(x,\Psi(x,\delta))=0.$$ 
For this note that $$\Psi(x,z)=\text{Fr}^{-n}(\tilde \Phi_2(\Phi_1(x,d),z)$$
where $n=n_1+n_1$ and $\tilde \Phi_2$ is a function such that $\text{Fr}^{-n_1}(\Phi_2(x,s))=\tilde\Phi_2(\text{Fr}^{-n_1}(x),s)$ for all $(x,s)$ so $\tilde\Phi_2$ is the function obtained by changing all the coefficients of $\phi_2$ for its $p^{n_1}$-power. That is, if $$\Phi_2(x,s)=\sum_I a_I x^{i_1} s ^{i_2},$$ then $$\tilde \Phi_2(x,s)=\sum_I a_I^{p^{n_1}} x^{i_1} s^{i_1 p^{n_1}}.$$

Let $$\tilde \Psi(x,s):=\tilde \Phi_2(\Phi_1(x,e),z)$$
an analytic function.

So we want to prove that for $\delta$ close enough to $q_1$ the function $G(x,\text{Fr}^{-n}(\tilde \Psi(x,\delta)))$ has a zero at $V_i$ but again $$G\left(x,\text{Fr}^{-n}(\tilde \Psi(x,z))\right)=\text{Fr}^{-n}\left(\tilde G(x,\tilde\Psi(x,z))\right)$$ where $\tilde G$ is obtained obtained from $G$ changing all the coefficients for it's $p^n$-power. That is, if $$G(x,y)=\sum_I a_{I} x^{i_1} y^{i_2},$$ then $$\tilde G(x,y)=\sum_I a_{I}^{p^n} x^{i_1 p^n} y^{i_2.}$$

Now as $\tilde G(x,\tilde \Psi(x,z))$ is an analytic function that is zero at $(x_i,q_1)$ we can apply Fact \ref{continuityOfRootsTheoremf} and find some neighborhood $W_i$ of $q_1$ such that for all $\delta\in W_i$ there is $x\in V_i$ with $\tilde G(x,\Psi(x,\delta))=0$ but then $$G\left(x,\text{Fr}^{-n}(\tilde \Psi(x,\delta))\right)=\text{Fr}^{-n}\left(\tilde G(x,\tilde \Psi(x,\delta))\right)=\text{Fr}^{-n}(0)=0.$$ 

\textit{End of proof of Claim \ref{claimNoPierdoCeros}}
\end{proof}

Then if $\delta\in \bigcap W_i\setminus \{q_1\}$, $Z_{\delta}^0$ contains at least two zeros at $V$ and at least one at $V_i$ for $i>1$ so it has at least $n+1$ points and since there are infinitely many such $z$'s, we are done. 

\end{proof}

\chapter{Additive Case}\label{additiveCase}

In this chapter we prove:

\begin{thm}\label{propAdditive}

Let $(G,+)$ be an infinite $\mathbb K$-definable subroup of $(K,+)$ group and let $X\subseteq G^2$ be a  non affine $\mathbb K$-definable set of dimension one. Assume that the structure $\mathcal G=(G,\oplus,X)$ is strongly minimal, then $\mathcal G$ interprets an algebraically closed field that is definable isomorphic (in $\mathbb K$) to $(K,+,\cdot,0,1)$. 
\end{thm}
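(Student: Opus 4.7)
The plan is to apply Theorem~\ref{thmFieldGivenFamily} by exhibiting a good family of curves in $\mathcal G$ built from translates of the auxiliary curve $Y$ produced by Proposition~\ref{assmGraphpC}, and then to identify the resulting interpretable field with $(K,+,\cdot,0,1)$ via the classification of infinite fields interpretable in ACVF. Since $(K,+)$ trivially satisfies Assumption~\ref{assmOperationSumDerivative}, Proposition~\ref{assmGraphpC} applies to $X$ and yields a $\mathcal G$-definable set $Y \subseteq G \times G$ of dimension one, together with an analytic function $h : U \to G$ on an open neighborhood $U$ of $0 \in G$ satisfying $h(0) = 0$ and $(x, h(x)) \in Y$ for all $x \in U$, and with a Frobenius-twisted analytic parametrization of $Y$ at each of its points.

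For $a \in G^2$, set $X_a := Y - a$, which is $\mathcal G$-definable, and for $s \in U$ set $\bar s := (s, h(s)) \in Y$ and $H(x, s) := h(x + s) - h(s)$, analytic on $U \times U$ after shrinking $U$ if necessary. Direct computation gives $H(0, s) = 0$ and $(x, H(x, s)) = (x+s, h(x+s)) - \bar s \in X_{\bar s}$, settling the data items and clauses (1), (3), and (4) of Definition~\ref{defGoodFamily}. Clause (2) holds because each $X_a$, being a translate of $Y$, is one-dimensional with components that are relatively valuation-open subsets of irreducible Zariski-closed curves, and such pieces have no isolated points. Clause (6) transfers from the corresponding property of $Y$: if near a point of $Y$ one has $(z, \text{Fr}^{-n}(g(z))) \in Y$ for some analytic $g$ and $n \in \mathbb N$, then setting $\Phi(w, \delta) := g(w + \delta) - \text{Fr}^n(h(\delta))$ gives an analytic $\Phi$ with $(w, \text{Fr}^{-n}(\Phi(w, \delta))) \in X_{\bar \delta}$ in a suitable neighborhood, using additivity of $\text{Fr}^{-n}$.

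The crux is clause (5): $\{H_x(0, s) : s \in U\} = \{h'(s) : s \in U\}$ must be infinite. If $h'$ were constant on $U$, Fact~\ref{identityTheoremf} would force $h(x) = cx$ for some $c \in K$, so the Zariski closure of the graph of $h$, and hence one irreducible component of the Zariski closure of $X$, would be a line. The task is then to show that if \emph{every} irreducible component of the Zariski closure of $X$ were a line, $X$ would be $G$-affine, contradicting our hypothesis. This is where strong minimality of $\mathcal G$ must enter: it constrains the fibers $X_a$ to be finite or cofinite in $G$ with uniform bounds, which, combined with the linear structure of the Zariski closure, forces the intersection $X \cap L$ to be cofinite in $L$ for each line component $L$, making $X$ a boolean combination of affine lines and finite sets, hence $G$-affine. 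Consequently some component of the Zariski closure of $X$ must be non-linear, and by choosing the starting point in the proof of Proposition~\ref{assmGraphpC} on this component, we obtain an $h$ whose graph lies in a non-linear branch, so $h'$ is non-constant and $\{h'(s) : s \in U\}$ is infinite by Fact~\ref{identityTheoremf}.

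With all clauses verified, Theorem~\ref{thmFieldGivenFamily} produces an infinite field $F$ interpretable in $\mathcal G$, hence in $\mathbb K$. To identify $F$ with $K$, I invoke the classification of infinite fields interpretable in ACVF of fixed characteristic: any such field is $\mathbb K$-definably isomorphic to $K$. The main obstacle in this program is the non-affineness-to-non-linearity step above, since extracting a Zariski-geometric conclusion (that some irreducible component of the closure of $X$ is non-linear) from the model-theoretic hypothesis of non-affineness requires a careful interplay between strong minimality of $\mathcal G$ and the quantifier elimination structure of $\mathbb K$; the remaining steps are direct verification of Definition~\ref{defGoodFamily} together with invocations of Proposition~\ref{assmGraphpC} and Theorem~\ref{thmFieldGivenFamily}.
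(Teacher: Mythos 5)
The proposal's first case (when $s_1(h)=\{h'(z):z\in U\}$ is infinite) matches the paper: take $X_a = Y_a$, $H(x,s)=h(x+s)-h(s)$, and verify Definition~\ref{defGoodFamily} directly. But your treatment of the complementary case contains a genuine gap, and it is precisely the case that is new to this theorem.

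You claim: ``If $h'$ were constant on $U$, Fact~\ref{identityTheoremf} would force $h(x)=cx$ for some $c\in K$, so\ldots one irreducible component of the Zariski closure of $X$ would be a line.'' This implication is only valid in characteristic zero. In characteristic $p>0$, a power series with identically constant derivative need not be linear: $h'(x)\equiv c$ only forces $h(x)=cx+F(x^{p^n})$ for some analytic $F$ and some $n\geq 1$ (this is exactly what the paper observes at the start of the second case of Proposition~\ref{assmFamily}). For example, $h(x)=x+x^{2p}$ has $h'\equiv 1$ yet is not linear, and for $p>2$ its graph is not a coset of a subgroup of $(K,+)^2$, so it is not $G$-affine. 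Your proposed strong-minimality argument that ``every component linear $\Rightarrow$ $X$ is $G$-affine'' therefore cannot rescue the argument: the hypothesis of that implication is simply never reached, because constancy of $h'$ does not hand you a line. Since the main point of Theorem~\ref{propAdditive} is to extend \cite{KR} from characteristic zero to positive characteristic, this case cannot be set aside.

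The paper handles it by replacing the naive translated family $Y_a$ by the family $X_a := (Y-Y_a)\circ(Y-Y_c)^{-1}$ and by analysing higher Taylor coefficients rather than the first derivative. Lemma~\ref{lemmaN} shows that the first varying coefficient (as the base point moves) occurs at degree $N(h)=p^m$; the functions $h-h_a$ are then power series in $x^{p^m}$, and composing with a Frobenius rescaling (via $G_a\circ G_b^{-1}$) produces a family whose first-order slope at $0$ \emph{does} vary over an infinite set, which is clause (5). Establishing clause (6) also requires tracking the Frobenius twists carefully (the $\text{Fr}^{-n_1}$, $\text{Fr}^{-n_2}$ bookkeeping in the paper). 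None of this machinery is present in your proposal, so in positive characteristic the proof does not go through.

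A secondary remark: the final identification of the field with $(K,+,\cdot,0,1)$ via a classification of infinite fields interpretable in ACVF is the same move as the paper (Theorem 7.1 of \cite{HYField}); that part is fine.
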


The fact that the field is isomorphic to $K$ follows from Theorem 7.1 of \cite{HYField}. So we will devote the entire chapter to find an interpretable field.

Throughout this chapter we fix $G$ and $X\subseteq G^2$ satisfying the assumptions of Theorem \ref{propAdditive}. Without loss of generality we may assume that $(0,0)\in X$.

Note that in this case Assumption \ref{assmOperationSumDerivative} just says that $(f+g)'(0)=f'(0)+g(0)$ and $(-f)'(0)=-(f'(0))$ for each pair of functions $f$ and $g$ analytic at $0$. Which is clearly true.

Therefore it is enough to provide a good family of curves definable in $\mathcal G$.


Toward that end we need some preliminaries on power series.

\section{Power Series}

\begin{defi}\label{defN}
Let $$f(x)=\sum_{n\geq 1} b_n x^n$$ be a power series converging in a neighborhood $D(f)$ of $0$. For $a\in D(f)$ define $f_a(x)=f(x+a)-f(a)$. Suppose $$f_a(x)=\sum_{n\geq 1}b_{n,a} x^n$$ and define 
\begin{equation*}
\begin{split}
s_n(f) &:=\{b_{n,a}:a\in D(f)\}  \\ 
N(f) &:=\min\{n:s_n(f)\text{ is infinite}\}
\end{split}
\end{equation*}
\end{defi}

\begin{lemma}\label{N}
Let $f(x)$ be a function analytic at $0$ such that $f(0)=0$ and $f'(0)\neq 0$. Let $g(x)$ be an analytic inverse for $f$ converging in some neighborhood of $0$. Then if $N(f)$ is finite, $N(g)\geq N(f)$.
\end{lemma}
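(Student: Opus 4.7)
The plan is to show that $s_m(g)$ is finite for every $m$ with $1 \le m < N(f)$, which immediately yields $N(g) \ge N(f)$. The case $N(f) = 1$ is vacuous since $N(g) \ge 1$ always, so I assume $N(f) \ge 2$ throughout.

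The core step is a recursive formula expressing the Taylor coefficients of $g_a$ in terms of those of $f_{g(a)}$. After shrinking domains so that $f \circ g = \id$ on $D(g)$, fix $a \in D(g)$, put $b := g(a)$, and abbreviate $c_k := b_{k,a}(g)$ and $d_n := b_{n,b}(f)$. Substituting $u := g(a+y) - b = \sum_{k\ge 1} c_k y^k$ into the identity
\[
 y \;=\; f(b+u) - f(b) \;=\; \sum_{n \ge 1} d_n u^n
\]
and comparing coefficients of $y^m$, one solves inductively for $c_m$ and obtains a formula of the form
\[
 c_m \;=\; P_m(d_1, d_2, \ldots, d_m)\, d_1^{-e_m},
\]
where $P_m \in \ZZ[X_1, \ldots, X_m]$ is a universal polynomial and $e_m \in \NN$; for example $c_1 = d_1^{-1}$ and $c_2 = -d_2 d_1^{-3}$.

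Next I would handle the denominator. Since $N(f) \ge 2$, the set $s_1(f)$ is finite, so the $\mathbb{K}$-definable partition $D(f) = \bigsqcup_{\lambda \in s_1(f)} E_\lambda$, with $E_\lambda = \{a : b_{1,a}(f) = \lambda\}$, has finitely many parts. By Fact \ref{QEf} each $E_\lambda$ is locally the intersection of an open set with a Zariski closed set, so if it had empty interior it would be finite; since $D(f)$ is an open neighborhood of $0$, some $E_\lambda$ must contain an open subset, and then Fact \ref{identityTheoremf} forces $b_{1,\cdot}(f) \equiv \lambda$ on $D(f)$. The value must be $b_{1,0}(f) = f'(0)$, so $d_1$ is a fixed nonzero scalar on $D(g)$. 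Now fix $m$ with $1 \le m < N(f)$: for each $1 \le k \le m$ the set $s_k(f)$ is finite by definition of $N(f)$, hence $d_k = b_{k, g(a)}(f)$ takes only finitely many values as $a$ varies over $D(g)$. By the formula of Step 1, $c_m = b_{m,a}(g)$ takes only finitely many values, so $s_m(g)$ is finite and $N(g) \ge N(f)$.

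The principal subtlety is that in positive characteristic the $b_{k,a}$ must be read as Taylor (Hasse) coefficients rather than as $f^{(k)}(a)/k!$; the recursion of Step 1, however, is a purely formal identity in $\ZZ[[c_1, c_2, \ldots\,;\, d_1, d_2, \ldots]]$ and so works in any characteristic, which is why $\ZZ$-coefficients suffice in $P_m$.
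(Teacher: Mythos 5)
Your proof is correct and follows essentially the same route as the paper's: the heart of both arguments is the explicit Lagrange-inversion recursion showing that $c_m$ is a universal polynomial in $b_1,\dots,b_m$ over a power of $b_1$, so that finiteness of each $s_k(f)$ for $k<N(f)$ propagates to $s_m(g)$. You are, if anything, more careful than the paper at two points where it is terse: you flag that the recursion is a universal identity over $\ZZ$ (so it survives in characteristic $p$ with Hasse-type coefficients), and you note that ``$s_k(f)$ finite'' need only give ``finitely many values'' rather than ``constant'' for $d_k$, $k<N(f)$ --- though your separate QE-plus-identity-theorem argument upgrading $d_1$ to a genuine constant is unnecessary, since $d_1$ taking finitely many nonzero values already makes $d_1^{-e_m}$ finitely valued.
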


\begin{proof}
The existence of $g$ it is just Fact \ref{inversionf}. 

The inequality $N(g)\geq N(f)$ is a consequence of the usual Lagrange inversion formula but we present a proof anyway. We will prove that $c_n$ depends only on $b_1,b_2,\ldots,b_n$ by induction on $n$. For $n=1$ it is easy to see that $c_1=1/b_1$. Assume therefore that $c_n$ depends only on $b_1,\ldots, b_n$ and we will prove it for $n+1$. As we have that $$x=f(g(x))=\sum_i b_i\left(\sum_j c_j x^j\right)^i$$ the coefficient of $x^{n+1}$ in the right side of equality has to be $0$. But this coefficient is 
$$b_1 c_{n+1} + b_2 d_2 +\ldots+b_n d_n+b_{n+1}c_1^{n+1},$$ 
where $d_2,\ldots,d_n$ are some polynomials quantities depending only on $c_1,\ldots,c_n$ that on it's turn, by induction hypothesis, depend only on $b_1,\ldots,b_n$. Therefore $$c_{n+1}=\frac{-b_2 d_2 - \ldots -b_n d_n - b_{n+1} c_1^n}{b_1}$$ depends only on $b_1,\ldots,b_n,b_{n+1}$.

Thus $c_{n,a}$ depends only on $b_{1,a},\ldots,b_{n,a}$ and if $k\leq N$ $b_{k,a}$ is constant as $a$ varies. Therefore if $n\leq N$ $c_{n,a}$, is constant as $a$ varies and then $N(g)\geq N$.

\end{proof}

\begin{corollary}
Assuming notation of Lemma \ref{N}. If $N(f)$ and $N(g)$ are both finite then $N(f)=N(g)$.
\end{corollary}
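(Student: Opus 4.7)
The plan is to derive the corollary as a direct symmetry argument on top of Lemma~\ref{N}. The key observation is that the hypothesis on $f$ in Lemma~\ref{N} is symmetric in $f$ and $g$: $f$ is analytic at $0$ with $f(0)=0$ and $f'(0) \neq 0$, and $g$ is its compositional inverse, but then equally $g$ is analytic at $0$ with $g(0)=0$ and $g'(0) = 1/f'(0) \neq 0$, and $f$ is \emph{its} compositional inverse. So the roles of $f$ and $g$ are interchangeable in the statement.

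First I would apply Lemma~\ref{N} to the pair $(f,g)$. Since $N(f)$ is finite by assumption, the lemma gives $N(g) \geq N(f)$. Next I would apply the same lemma, now to the pair $(g,f)$: since $N(g)$ is finite by assumption and $f$ is an analytic inverse of $g$ in a neighborhood of $0$, the lemma yields $N(f) \geq N(g)$. Combining these two inequalities gives $N(f) = N(g)$, which is the desired conclusion.

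The only subtlety I would want to double-check is that the hypotheses of Lemma~\ref{N} really do apply symmetrically; in particular, that $g$ satisfies $g(0)=0$ and $g'(0) \neq 0$. Both are immediate from $f(g(x))=x$ (evaluate at $0$, and differentiate at $0$ using the chain rule to get $f'(0)g'(0)=1$). There is no real obstacle here; the corollary is essentially the symmetric sharpening of the lemma. No further computation or power-series manipulation is needed beyond invoking Lemma~\ref{N} twice.
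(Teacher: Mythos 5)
Your proof is correct and matches the paper's argument exactly: both apply Lemma~\ref{N} twice, once to the pair $(f,g)$ and once with the roles swapped, using the finiteness of both $N(f)$ and $N(g)$ to obtain the two inequalities whose conjunction gives equality. Your extra check that $g$ satisfies the lemma's hypotheses is a reasonable bit of diligence that the paper leaves implicit.
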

\begin{proof}
By Lemma \ref{N} one has that $N(f)\leq N(g)$ apply same lemma for $g$ and get that $N(g)\leq N(h)$ where $h$ is the inverse of $g$ but then $h=f$ so $N(f)=N(g)$.
\end{proof}

\begin{lemma}\label{lemmaN}
Assume $$f=\sum_{n\geq 1}b_n x^n$$ is an analytic function converging in a neighborhood of $0$. Assume moreover that $N(f)$ is finite. Let $$l=\min \{e\in \mathbb N: \exists n>1 \text{ such that }p\nmid n\wedge  b_{np^e}\neq 0\}.$$
Then for all $k<l$ and all $m\geq 1$ such that $p\nmid m$ the coefficient $b_{mp^k,a}$ is constant as $a$ varies. Moreover $N(f)=p^l$. 
\end{lemma}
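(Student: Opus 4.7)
The plan is to reduce everything to base-$p$ arithmetic via Lucas's theorem. First I would expand $f(x+a)-f(a)=\sum_{n\geq 1} b_{n,a} x^n$ using the binomial theorem to obtain
\[
b_{n,a}=\sum_{j\geq n} b_j \binom{j}{n} a^{j-n}.
\]
In characteristic $p$, the coefficient $b_{n,a}$ is constant in $a$ precisely when $\binom{j}{n}\equiv 0\pmod{p}$ for every $j>n$ in the support of $f$, which by Lucas's theorem is a digit-wise comparison in base $p$.

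Next I would unpack the hypothesis on $l$: the minimality in its definition means that $b_j\neq 0$ forces $j$ to be either a pure power $p^e$ (including $j=1=p^0$) or of the form $n'p^e$ with $n'>1$, $p\nmid n'$, and $e\geq l$. Fixing $n=mp^k$ with $p\nmid m$ and $k<l$, the base-$p$ expansion of $n$ has a nonzero digit at position $k$ (namely the lowest nonzero digit of $m$, shifted up by $k$). For both admissible types of $j>n$ I would check via Lucas that $p\mid\binom{j}{n}$: in the pure-power case the single nonzero digit of $j=p^e$ can dominate $n$ only if $e=k$ and $n=p^k$, but then $j=n$ violates $j>n$; in the second case the digits of $j=n'p^e$ below position $e$ all vanish, so in particular the digit at position $k<l\leq e$ is $0$, whereas $n$'s digit there is nonzero.

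Finally, to pin down $N(f)=p^l$ I would exhibit one surviving dependence. Choosing $n'>1$ with $p\nmid n'$ and $b_{n'p^l}\neq 0$ (which exists by the definition of $l$), the coefficient of $a^{(n'-1)p^l}$ in $b_{p^l,a}$ equals $b_{n'p^l}\binom{n'p^l}{p^l}$, and Lucas reads this binomial as the lowest base-$p$ digit of $n'$, which is nonzero. Hence $b_{p^l,a}$ depends non-trivially on $a$, so $s_{p^l}(f)$ is infinite and $N(f)\leq p^l$. The reverse inequality is immediate: any $n<p^l$, written as $n=mp^k$ with $p\nmid m$, must have $k<l$ (else $n\geq p^k\geq p^l$), so $b_{n,a}$ is constant by the first part. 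The only delicate step is the Lucas case analysis in the second paragraph; once that digit-counting is laid out cleanly, the rest is bookkeeping.
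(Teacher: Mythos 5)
Your proof is correct and follows essentially the same route as the paper's: you expand $f(x+a)-f(a)$ and analyze which exponents can contribute to the coefficient of $x^{mp^k}$ via base-$p$ arithmetic, then exhibit a single nonvanishing coefficient of $b_{p^l,a}$ viewed as a power series in $a$. The only cosmetic difference is that you invoke Lucas's theorem to read off $\binom{j}{n} \bmod p$ digit by digit, whereas the paper reaches the same conclusion directly from the Frobenius identity $(x+a)^{p^s}=x^{p^s}+a^{p^s}$; the two tools are interchangeable here and the underlying combinatorics is the same.
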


\begin{proof}
Assume notation of Definition \ref{defN} and let $l$ be as in the statement of the lemma. It is well defined because of our assumption on $N(f)$: If $$\{e\in \mathbb N: \exists n>1 \text{ such that }p\nmid n\wedge  b_{np^e}\neq 0\}=\emptyset$$ then $f$ is a function in which the degree of all the non zero monomials are powers of $p$ so $f(x+a)=f(x)+f(a)$ and therefore $s_n(f)$ is finite for all $n$.
 
We will show that for all $k<l$ and all $m\geq 1$ with $p\nmid m$ the coefficient $b_{mp^k,a}$ is constant as $a$ varies. And we will also show that the coefficient $b_{p^l,a}$ assumes infinitely many values as $a$ varies. 
 
Let $n>1$ such that $b_{np^l}\neq 0$ and $p\nmid n$. Therefore $$f(x)=b_1 x + b_{p^2} x^{p^2} + \ldots + b_{p^l} x^{p^l} + \sum_{i>p^l} b_i x^i.$$ 

We will show first that if $k<l$ and $m\geq 1$ with $p\nmid m$, then the coefficient $b_{mp^k,a}$ is constant as $a$ varies. Remember that $b_{mp^k,a}$ is the coefficient of $x^{mp^k}$ in the expansion of $f_a(x)=f(x+a) - f(a)$. In this expansion $x^{mp^k}$ just appears in the terms of the form  $b_t (x+a)^t$ with $t\geq mp^k$. Write $t=u p^s$ with $p\nmid u$. Suppose first $u>1$ then if $s<l$, $b_t=0$. If $s\geq l>k$ then $(x+a)^t=(x^{p^s} + a^{p^s})^u$. And therefore in the expansion of $(x+a)^t$ all the exponents of $x$ are multiples of $p^s$ so non of those equals $mp^k$ (as $s>k$ and $p\nmid m$). We can assume then that $u=1$ but in this case $(x+a)^t=x^{p^s} +a^{p^s}$ and the only way the term $x^{mp^k}$ appears is $k=s$ and $m=1$. In this case the coefficient of $x^{mp^k}$ in $f_a(x)$ is $b_{mp^k}$ so it is constant as $a$ varies.\\

We show now that the coefficient $b_{p^l,a}$ is not constant as $a$ varies. As $$f(x)=b_1 x + b_{p^2} x^{p^2} + \ldots + b_{p^l} x^{p^l} + \sum_{i>p^l} b_i x^i$$ and $b_{np^k}=0$ for all $n>0$ and $k<l$ the function $\displaystyle\sum_{i>p^l} b_i x^i$ is indeed a function in the variable $x^{p^l}$. So $$f(x)=b_1 x + \ldots + b_{p^l} x^{p^l} +\sum_{i>1} b_{i p^l}x^{ip^l}$$ therefore the coefficient of $x^{p^l}$ in $f_a$ is $$b_{p^l,a}=b_{p^l} + \sum_{i>1} b_{i p^l} (a^{p^l})^i $$ but this is a series in the variable $a$ with some non zero coefficients, using Fact \ref{identityTheoremf} we conclude that it is not constant as $a$ varies and we conclude. 
\end{proof}

\section{Getting a good family of curves on $\mathcal G$}

In this section we prove:

\begin{prop}\label{assmFamily}
There is a good family of curves (as in Definition \ref{defGoodFamily}) definable in $\mathcal G$.
    
    
    
    
    
\end{prop}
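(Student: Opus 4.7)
The plan is to take the parameter set $Y$ provided by Proposition~\ref{assmGraphpC}, together with the analytic function $h\colon U\to G$ it furnishes near $e=0$, and to define the family of curves as the translates $X_a := \{(x,y)\in G^2 : (x+a_1, y+a_2) \in Y\}$ for $a=(a_1,a_2)\in Y$, so that each $X_a$ passes through $(0,0)$. The analytic function in the data will be $H(x,s):=h(x+s)-h(s)$, defined on a sufficiently small neighborhood of $(0,0)$ in $U\times U$, and the third item of Definition~\ref{defGoodFamily} is the same $h$.

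Clauses~1--4 and clause~6 of Definition~\ref{defGoodFamily} should be routine: $\bar s=(s,h(s))\in Y$ by construction of $h$; each $X_a$ is a translate of $Y$, hence one-dimensional, and after discarding the finite exceptional set from Proposition~\ref{assmGraphpC} has no isolated points; the incidence $(x,H(x,s))\in X_{\bar s}$ reduces to $(x+s,h(x+s))\in Y$; and $H(0,s)=0$. For clause~6, the local Frobenius-analytic description of $Y$ at an arbitrary point transports under translation to a description of $X_{\bar\delta}$ of the required shape, using $(a-b)^{p^n}=a^{p^n}-b^{p^n}$ in characteristic $p$ to fold the additive translation into the argument of $\text{Fr}^{-n}$; concretely, if $Y$ near $(x+s,y+h(s))$ is the graph of $z\mapsto \text{Fr}^{-n}(g(z))$, then $X_{\bar\delta}$ near $(x,y)$ is the graph of $w\mapsto \text{Fr}^{-n}(g(w+\delta)-h(\delta)^{p^n})$.

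The heart of the proof, and the step I expect to be the main obstacle, is clause~5: the set $\{H_x(0,s):s\in U\}=\{h'(s):s\in U\}$ must be infinite. Since $h'$ is analytic and $\mathbb K$-definable on $U$, if its image were finite then by Fact~\ref{QEf} some fibre would contain an open ball, and then by the identity theorem (Fact~\ref{identityTheoremf}) $h'$ would be identically constant, say $h'\equiv c$, on $U$. I would derive a contradiction from this as follows. Let $F(x,y)$ be an irreducible polynomial defining the branch of the Zariski closure of $Y$ through the origin. Implicit differentiation along the graph of $h$ gives $F_x(x,h(x))+cF_y(x,h(x))=0$ for every $x\in U$; since $F_x+cF_y$ has strictly smaller degree than $F$ but vanishes on infinitely many points of the irreducible branch, one obtains $F_x+cF_y\equiv 0$. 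Solving this PDE yields $F(x,y)=G(y-cx)$ for some one-variable polynomial $G$, and since $K$ is algebraically closed $G$ must be linear, so $F(x,y)=y-cx-\alpha$ with $\alpha=0$ because $h(0)=0$. Consequently the branch of $Y$ through origin is the line $y=cx$, a $\mathbb K$-definable subgroup of $G^2$.

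To convert this into a contradiction, I would begin the argument with a WLOG reduction: since $X$ is not $G$-affine, some irreducible component of its Zariski closure is not contained in any coset of a $\mathbb K$-definable subgroup of $G^2$; translating $X$ by a smooth point on such a component brings a non-affine component through the origin, and this translation is merely a parameter change, preserving strong minimality, non-affine-ness, and the hypothesis $(0,0)\in X$. Because Proposition~\ref{assmGraphpC} performs no Frobenius correction at $x_0=e$, and because we may take the basepoint $x=(0,0)$ in its construction (forcing $y'=(0,0)$ and $Y''=Y'$), the branch of $Y$ through origin coincides as a set with the branch of $X$ through origin (up to at most a coordinate swap, which preserves non-affine-ness). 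The conclusion ``branch = affine line'' above then contradicts the WLOG non-affine-ness of this branch, so $h'$ is non-constant on $U$, clause~5 holds, and the five-tuple above is the desired good family of curves.
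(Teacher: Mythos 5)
Your construction coincides with the paper's \emph{first} case (the case where $s_1(h)=\{h'(s):s\in U\}$ is infinite), where the paper indeed takes $X_a:=Y_a$, $H(x,s)=h(x+s)-h(s)$, and pushes the local Frobenius-analytic description through the translation exactly as you do. But you present this as the complete proof, and the argument by which you try to rule out the complementary case contains a fatal error in positive characteristic.

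The error is in the step ``Solving this PDE yields $F(x,y)=G(y-cx)$.'' In characteristic $p>0$, the identity $F_x+cF_y\equiv 0$ does \emph{not} force $F$ to be a function of $y-cx$ alone; changing variables to $u=x$, $v=y-cx$ turns the equation into $\tilde F_u\equiv 0$, which in characteristic $p$ only says $\tilde F$ depends on $v$ and $u^p$, i.e.\ $F(x,y)=\hat F(y-cx,\,x^p)$. Concretely, take $h(x)=x+x^{2p}$ with $p\geq 3$: then $h'\equiv 1$ identically, yet $h$ is not additive, so the graph is an irreducible branch through the origin that is not a coset of any subgroup of $(K,+)^2$. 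Thus ``$h'$ constant'' genuinely happens for non-affine branches, clause~5 fails for the family $X_a=Y_a$, and your attempted contradiction never materializes. This is not a fixable blemish: the case $s_1(h)$ finite is precisely the new phenomenon the paper must handle to extend \cite{KR} to positive characteristic. The paper's treatment of it is substantially different: it writes $h(w)=Kw+F(w^{p^n})$, introduces the invariant $N(f)$ (Definition~\ref{defN} and Lemma~\ref{lemmaN}), and replaces the translates $Y_a$ by the composed family $X_a:=(Y-Y_a)\circ(Y-Y_c)^{-1}$, whose slope at the origin is $d_{1,a}/d_{1,b}$ and \emph{does} vary. You would need to supply this (or an equivalent) construction to close the gap.
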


\begin{proof}
Take $Y$, $U$ and $h$ as provided by Proposition \ref{assmGraphpC} applied to the structure $\mathcal K$.
If $s_1(h)=\{h'(z):z\in U\}$ is infinite, then for $a\in Y$ define $X_a:=Y_a$ (as in Definition \ref{AdditionAndCompositionDef}). So we can take $$H(x,s)=h(x-s)+h(s)$$ and if $s\in U$ and $(x,y)\in Y_{\bar s}$ by the conclusion of Proposition \ref{assmGraphpC} there is an analytic function $g$ defined in a neighborhood of $x$ and a natural number $n$ such that $(w,\text{Fr}^{-n}(g(w)))\in Y$ for all $w$ in that neighborhood. Now using Lemma \ref{additionAndCompositionLemma}, if $\delta$ is close enough to $0$ then  $$(w-\delta,\text{Fr}^{-n}(g(w))-h(\delta))\in Y_{\bar \delta}$$ so we can take $$\Phi(w,\delta)=g(w+\delta)-\text{Fr}^{n}(h(\delta)).$$

Now assume that $s_1(h)$ is finite. Therefore there is an open neighborhood $U$ containing $0$ such that $h'(w)$ is constant in $U$. Using Fact \ref{identityTheoremf} we have that $h(w)= Kw + f(w)$ where $K$ is some constant and $f(w)$ is a function in the variable $w^{p^n}$ for some $n>0$. Take $n$ maximal with that property. So $h(w)=Kw + F(w^{p^n})$ for some analytic function $F$.
For each $a\in Y$ define 
\begin{equation}\label{equationDefGood}
X_a:= (Y-Y_a)\circ (Y-Y_c)^{-1}
\end{equation}
where $c=(c_1,h(c_1))$ for some fixed $c_1\in U$.

\begin{claim}\label{xIsGoodFamily}
The family $(X_a)_{a\in Y}$ defined by Equation \ref{equationDefGood} is a good family of curves for $\mathcal G$.
\end{claim}

Suppose that $$h(x)= Kx + \sum_{i\geq 1}b_{ip^n}x^{ip^n}$$
with $b_{p^n}\neq 0$. Moreover let $m$ such that $N(f)=p^m$ (using Lemma \ref{lemmaN}). So there is an open neighborhood $U_0$ of $0$ such that for all $a\in U_0$, if we define $$h_a(x)=h(x+a)-h(a)$$ and put $$h_a(x)=\sum_{n\geq 1} b_{n,a}x^n$$ then $$b_{n,a}=b_n$$ for all $n$ such that $s_n(h)$ is finite. Therefore $Y-Y_a$ contains the graph of $$h-h_a=\sum_{i\geq 1}d_{i,a}x^{ip^m}$$
where $d_{i,a}=b_{ip^m}-b_{ip^m,a}$. Note that $\{d_{1,a}:a\in U\}$ is infinite. In the same fashion for all $b_1\in U$ if we call $b=(b_1,h(b_1))$, then $Y-Y_b$ contains the graph of $$h-h_b=\sum_{i\geq 1}d_{i,b}x^{ip^m}$$ we can pick $b_1$ such that $d_{1,b}\neq 0$. 

Now define $$G_a(x)=\sum_{i\geq 1}d_{i,a} x^i$$ so $$(h-h_a)(x)=G_a(x^{p^m}).$$ As $d_{1,b}\neq 0$ then $G'_b(0)\neq 0$ so using Fact \ref{inversionf}, exists an analytic function $G^{-1}_b$ defined in some neighborhood of $0$ such that $G_b(G_b^{-1}(z))=z$ for all $z$ in that neighborhood.

\begin{claim}\label{claimGraphIsContained}
The graph of $G_a \circ G_b^{-1}$ is contained in $(Y-Y_{\bar a})\circ (Y-Y_b)^{-1}=X_{\bar a}$.
\end{claim}
\begin{proof}(Proof of Claim \ref{claimGraphIsContained})

Let $y=G_a(G_b^{-1}(x))$ and we will prove that $(x,y)\in (Y-Y_{\bar a})\circ (Y-Y_b)^{-1}=X_{\bar a}$. So we have to show that there is some $z$ with $(z,x)\in Y-Y_b$ and $(z,y)\in Y-Y_{\bar a}$. Let $z$ such that $z^{p^m}=G_b^{-1}(x)$. Then, $$(h-h_b)(z)=G_b(z^{p^m})=G_b(G_b^{-1}(x))=x$$ and as the graph of $h-h_b$ is contained in $Y-Y_b$ then $(z,x)\in Y-Y_b$. In a similar way $$(h-h_a)(z)=G_a(z^{p^m})=G_a(G_b^{-1}(x))=y$$ and as the graph of $h-h_a$ is contained in $Y-Y_{\bar a}$ then $(z,y)\in Y-Y_{\bar a}$ so $(x,y)\in (Y-Y_{\bar a})\circ (Y-Y_b)^{-1}$

\textit{(End of proof of Claim)}
\end{proof}

Now note that  

$$G_b^{-1}(x)=\frac{1}{d_{1,b}}x + L(x)$$ for some analytic function $L$ such that every non zero monomial of $L$ has degree bigger than $1$.

Therefore $$G_a\circ G_b^{-1} (x) = \frac{d_{1,a}}{d_{1,b}} x + \sum_{i>1} e_{i,a} x^{i}$$ For some coefficients $e_{i,a}$. It is easy to see that each $e_{i,a}$ is a power series in the variable $a$, and the same is true for $d_{1,a}$. Therefore there is a power series $H(x,a)$ such that for all $a$ in a neighborhood of $0$ one has that $H(x,a)=(G_a\circ G_b^{-1})(x)$ so $$H_x(0,a)=(G_a\circ G_b^{-1})'(0)=\frac{d_{1,a}}{d_{1,b}}$$ that takes infinitely many values as $a$ varies. 

So we have already proved clauses 1-5 in the definition of good family of curves and only 6 is missing. For this let $s\in U$ and let $(x,y)\in (Y-Y_{\bar s})\circ (Y-Y_{b})^{-1}$ so there is some $z$ such that $(z,x)\in Y-Y_b$ and $(z,y)\in Y-Y_{\bar s}$. There are $y_1,y_2$ such that $(z,y_1)\in Y$, $(z,y_2)\in Y_{\bar s}$ and $y=y_1-y_2$. Again Proposition \ref{assmGraphpC} there are analytic functions $g_1$, $g_2$ defined in some neighborhoods $U_1$ and $U_2$ of $0$, and natural numbers $n_1$, $n_2$

$\text{Fr}^{-n_1}(g_1(z))=y_1$, 
$\text{Fr}^{-n_2}(g_2(z))=y_2$,

and the graph of $\text{Fr}^{-n_1}\circ g_1$ and $\text{Fr}^{-n_2}\circ g_2$ are contained in $Y$ and $Y_{\bar s}$ respectively.

Moreover for $\delta$ close enough to $s$, the graph of  $w\mapsto \text{Fr}^{-n_2}(g_2(w-s+\delta))+h(s)-h(\delta)$ for $w\in U_2$ is contained in $Y_{\bar\delta}$. For showing that we have to prove that if $$y=\text{Fr}^{-n_2}(g_2(w-s+\delta))+h(s)-h(\delta)$$ then $(w,y)\in Y_{\bar\delta}$  but this is equivalent to show that $(w+\delta,y+h(\delta))\in Y$ and again this is the same to show that $(w+\delta-s,y+h(\delta)-h(s))\in Y_{\bar s}$. And as the graph of $\text{Fr}^{-n_2}\circ g_2$ is contained in $Y_{\bar s}$ it is enough to prove that  
$$\text{Fr}^{-n_2}(g(w+\delta-s))=y+h(\delta)-h(s)$$ but this is precisely the definition of $y$.

Note that $\text{Fr}^{-n_2}(g_2(w-s+\delta))+h(s)-h(\delta)=\text{Fr}^{-n_2}(G(w,\delta))$ where $$G(w,\delta)=g_2(w-s+\delta)+h(s)^{p^{n_2}}-h(\delta)^{p^{n_2}},$$ an analytic function.

For all $(z,x)\in Y-Y_b$ there is an analytic function $g$ converging in some neighborhood of $z$ an a natural number $k$ such that 
$\text{Fr}^{-k}(g(z))=x$ and the graph of $\text{Fr}^{-k}\circ g$ is contained in $Y-Y_b$ in some neighborhood of $z$. Moreover we can assume that $g'$ is never zero. So there exists an inverse for $g$, say $f$, converging in a neighborhood of $x^{p^k}$ therefore  the graph of $f\circ \text{Fr}^{k}$ is contained in $(Y-Y_b)^{-1}$ so, for $\delta$ close enough to $s$, the graph of $$w\mapsto \text{Fr}^{-n_1}(g_1(\hat w)) -\text{Fr}^{-n_2}(G(\hat w,\delta))$$ where $\hat w=f(\text{Fr}^{k}(w))$ is contained in $(Y-Y_{\bar\delta})\circ (Y-Y_b)^{-1}$.

Assume $n_1\geq n_2$  so $$\text{Fr}^{-n_1}(g_1(\hat w)) -\text{Fr}^{-n_2}(G(\hat w,\delta))=\text{Fr}^{-n_1}\left( g_1(\hat w)- \text{Fr}^{n_1-n_2}(G(\hat w,\delta))\right)$$
We can take $n=n_1$ and $$\Phi(w,\delta)=g_1(\hat w)- \text{Fr}^{n_1-n_2}(G(\hat w,\delta))$$ in order to get clause 6 of the definition.
\end{proof}







By Theorem \ref{thmFieldGivenFamily} and Proposition \ref{assmFamily} we complete the proof of Theorem \ref{propAdditive}

\chapter{Multiplicative Case}\label{multiplicativeCase}

In this chapter we call $M=K\setminus \{0\}$ the universe of the multiplicative group of $K$ and $(\cdot)$ denotes the multiplication on $M$.

\begin{defi}
Let $(H,\otimes)$ be a one dimensional group interpretable in $\mathbb K$. We say that $H$ is locally isomorphic to $(M,\cdot)$ if there is $U\subseteq M$, some open set containing $1$, and $i:U\to H$, a $\mathbb K$-definable injection such that $i(1)$ is the neutral element of $H$ and for all $x,y\in U$ if $x\cdot y\in U$ then $i(x\cdot y)=i(x)\otimes i(y)$.
\end{defi}

The main result of the chapter is:

\begin{thm}\label{thmMultiplicative}
Let $(H,\otimes)$ be a $\mathbb K$-interpretable group of dimension one. Suppose that $H$ is locally isomorphic to $(M,\cdot)$. Assume moreover that $X\subseteq H^2$ is a $\mathbb K$-inerpretable set that is not a boolean combination of subgroups of $(H,\otimes)\times (H,\otimes)$, and assume $\mathcal H=(H,\otimes,X)$ is strongly minimal, then $\mathcal M$ interprets a field, moreover such a field is definable isomorphic (on $\mathbb K$) to $K$.
\end{thm}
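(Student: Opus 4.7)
The strategy, as sketched in the introduction, is in two stages: first I would build a one-dimensional $\mathcal H$-interpretable group $G$ locally isomorphic to the additive group $(K,+)$, and then invoke Theorem \ref{propAdditive} to interpret an algebraically closed field $\mathbb K$-definably isomorphic to $K$.

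For the first stage, I would use the local isomorphism $i:U\to H$ with $(M,\cdot)$ to pull the curve $X$ back through $i\times i$ to a $\mathbb K$-definable plane curve $X'\subseteq M^2$, and work in the expansion $\mathcal M=(M,\cdot,X')$. The hypothesis that $X$ is not a boolean combination of subgroups of $(H,\otimes)\times(H,\otimes)$ forces $X'$ to be non-affine over $(M,\cdot)$. Proposition \ref{assmGraphpC} then yields a one-dimensional $\mathcal M$-definable $Y\subseteq M^2$ whose branches are (possibly Frobenius-twisted) graphs of analytic functions through $1$. Since $(M,\cdot)$ satisfies Assumption \ref{assmOperationSumDerivative}, Lemma \ref{additionAndCompositionLemma} tells us that on the slope data $Y'(\cdot)$ the operations $\oplus$ and $\circ$ on branches correspond respectively to addition and multiplication in $K$; by recombining translates $Y_a,Y_b$ and their composites one obtains algebraic identities among slopes that encode the full affine group of $K$.

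The heart of the proof is to package this slope data into a $1$-dimensional group configuration for $\mathcal H$, analogously to the construction of Lemma \ref{defiDerivadaConf} inside the proof of Theorem \ref{thmFieldGivenFamily}. Picking generic $a,b$ in $M$ such that the slopes $Y'(\bar a),Y'(\bar b)$ are independent, I would exhibit a six-tuple whose algebraic dependencies realise the addition law on slopes; Fact \ref{groupConfig} then produces a $\mathcal H$-interpretable one-dimensional group $G$ from this configuration. To identify $G$ locally with $(K,+)$, the derivative computations of Lemma \ref{additionAndCompositionLemma} combined with Theorem \ref{confiGroupIso} furnish a $\mathbb K$-definable analytic local isomorphism between a neighborhood of the identity of $G$ and a neighborhood of $0\in K$.

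Once $G$ is in place, the push-forward of $X$ through the construction gives a $\mathcal G$-definable non-affine plane curve $Z\subseteq G^2$ with $\mathcal G=(G,\oplus,Z)$ strongly minimal, and Theorem \ref{propAdditive} completes the argument. The main obstacle is the middle step: manufacturing an additive group from multiplicative data uniformly in all characteristics. In characteristic zero the formal logarithm $1+\mathfrak m\to\mathfrak m$ essentially hands over the local isomorphism, but in characteristic $p>0$ no analytic log exists, so the additive structure must be extracted purely from the arithmetic of slopes of curves, with the Frobenius-twist bookkeeping that already appeared in Proposition \ref{assmFamily} when $s_1(h)$ was finite. Handling this case uniformly — in particular guaranteeing that the slope family $\{Y'(\bar s):s\in U\}$ is large enough to support a group configuration, and if not passing to a suitable composition-derived family as in the fallback of Proposition \ref{assmFamily} — will be the most delicate part.
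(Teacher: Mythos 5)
Your two-stage outline matches the paper's strategy in broad strokes, but there are two genuine gaps in the way you propose to carry it out.

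First, the group-configuration step is misaligned with the actual mechanism. You say Lemma~\ref{additionAndCompositionLemma} gives additivity and multiplicativity ``on the slope data $Y'(\cdot)$,'' but under the hypothesis that matters here (Assumption~\ref{assmDerivativeConstant}, i.e.\ $s_1(h)$ finite) the first-derivative data is \emph{constant} and carries no information; in the complementary case where $s_1(h)$ is infinite the paper does not pass through a one-dimensional group at all --- it builds a good family and invokes Theorem~\ref{thmFieldGivenFamily} directly. What actually drives Proposition~\ref{propGroupInterpretableAdditive} is the higher-order Taylor coefficient $d_N(a)$, where $N$ is minimal with $\{d_N(a):a\in U\}$ infinite: after normalising $Y$ by a composition $Y_c\circ Y_b^{-1}$ so that $d_1\equiv 1$ and $d_n\equiv 0$ for $1<n<N$, one proves $d_N(a,b)=d_N(a)+d_N(b)$ for the composite $h_a\circ h_b$ (Claim~\ref{claimSumaDerivadas}). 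That additivity, not any first-derivative slope identity, is what produces a configuration interalgebraic with one for $(K,+)$. Your sketch does not isolate $N$, does not do the normalisation that kills the intermediate coefficients, and hence cannot obtain the additive law.

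Second, the final invocation of Theorem~\ref{propAdditive} does not go through as stated. Fact~\ref{groupConfig} yields an $\mathcal H$-\emph{interpretable} group $G$, in general a quotient living off $K$, whereas Theorem~\ref{propAdditive} requires $G$ to be a $\mathbb K$-definable subgroup of $(K,+)$. The paper handles this by first using Fact~\ref{stronglyInternalf} to get a $\mathbb K$-definable injection of (a coset neighborhood in) $G$ into some $K^d$, then Theorem~\ref{confiGroupIso} to produce a $\mathbb K$-definable local isomorphism $\phi:U_2\to G$ from a subgroup $U_2\leq (K,+)$, and finally proves a separate Proposition~\ref{groupInterpretsField}: for such a pair $(G,\phi)$ and a non-$G$-affine curve $X\subseteq G^2$, the structure $(G,\oplus,X)$ interprets a field. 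That proposition re-runs the good-family-of-curves machinery by pulling back through $\phi$ and then carefully comparing intersection counts on and off $H\times H$; it is genuinely more than a black-box application of Theorem~\ref{propAdditive}, and your proposal has no substitute for it.
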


Again the fact that the field is isomorphic to $K$ follows from Theorem 7.1 of \cite{HYField} so we just have to find an interpretable field.

Throughout this chapter fix $H$ and $X\subseteq H^2$ as in the hypothesis of Theorem \ref{thmMultiplicative} and call $\mathcal H := (H,\otimes,X)$.  We also fix $U\subseteq M$ an open set containing $1$ and $i:U\to H$ given by the definition of locally isomorphic to $(M,\cdot)$

Let $1_H$ be the neutral element of $H$ so without loss of generality we shall assume that  $(1_H,1_H)\in X$.

For $Y\subseteq H^n$ let $Y^*\subseteq B^n$ be the inverse image of $Y$ via $i$, that is:
$$Y^{*}=\{(y_1,\ldots,y_n)\in B^n :(i(y_1),\ldots,i(y_n))\in Y\}$$ and for $y\in H^n$ let $y^{*}\in B^n$ such that $i(y^*)=y$.

Therefore $X^*$ is a $\mathbb K$-definable subset of $K$ of dimension one 

By using same definition as in proof of Proposition \ref{assmGraphpC} there is a $\mathcal H$-definable set $Y\subseteq H^2$, an open set $V$ contained in $U$, $V\ni 1$ and analytic function $h:V\to M$ such that $h(1)=1$ and $(x,h(x))\in Y^*$ for all $x\in V$.


Changing $U$ by $U\cap V$ (and $i$ by $i\vert_{U\cap V}$) we may assume that $U=V$.

Assume for a moment that $s_1(h):=\{h_a'(1):a\in U\}$ is infinite. In this case we can get a good family of curves definable in $\mathcal M$ just as in the additive case taking $H(x,s)=h(xs)/h(s)$. Therefore we can use the same definition as in Theorem \ref{thmFieldGivenFamily} and get a field interpretable in $\mathcal H$. So we may assume that $s_1(X)$ is finite and by shrinking $B$ we may assume:

\begin{assm}\label{assmDerivativeConstant}
 $h'_a(1)$ is constant as $a$ varies in $B$.
 \end{assm}
 
 Under Assumption \ref{assmDerivativeConstant} we will find a group $G$ interpretable in $\mathcal H$ that is (locally) isomorphic to $(K,+)$ therefore we use the results of Chapter \ref{additiveCase} to find a field interpretable in $G$ and therefore in $\mathcal H$.

 In the first section of this chapter we find such a group. In the second section we use that group for finding a field interpretable in $\mathcal H$.
 
 \section{Finding a Group}\label{findingAGroup}
 \begin{prop}\label{propGroupInterpretableAdditive}
 Under Assumption \ref{assmDerivativeConstant}, in any $V$, open neighborhood of $1$, there is a group tuple of elements of $U$ such that it's image via $i$ is a configuration $\mathcal H$ that is inter algebraic some group configuration of $(K,+)$.
 \end{prop}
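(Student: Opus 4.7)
The strategy is to exploit Assumption~\ref{assmDerivativeConstant} through Lemma~\ref{lemmaN} to pinpoint the first higher-order coefficient of $h$ that genuinely varies; call the associated scalar function $\lambda$. I will then build six elements of $U$ whose images under $i$ form a group configuration in $\mathcal{H}$ and whose images under $\lambda$ (a $\mathbb{K}$-definable function) form the mirror additive configuration in $(K,+)$. Because the same six positions underlie both configurations and both $\lambda$ and $i$ are $\mathbb{K}$-definable bijections on suitable opens, interalgebraicity is automatic.

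First I would verify that $N(h)$ is finite. If it were not, every coefficient $b_{n,a}$ would be constant in $a$, which by Fact~\ref{identityTheoremf} forces $h(x+a)=h(x)+h(a)-h(0)$ identically near the base point, so $h$ is essentially additive. Pulling this through $i$, the curve $Y$ (and hence $X$) would turn out to be a boolean combination of cosets of subgroups of $(H,\otimes)$, contradicting the hypothesis of Theorem~\ref{thmMultiplicative}. So by Lemma~\ref{lemmaN} we have $N(h)=p^{l}$ for some $l\geq 1$, and the coefficient $\lambda(a):=b_{p^{l},a}$ is a non-constant power series in $a$ whose linear term is nonzero; thus $\lambda$ is locally invertible near $1$. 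That $\lambda$ is $\mathbb{K}$-definable is standard, by an $\varepsilon$-$\delta$ formulation analogous to Fact~\ref{derivativeDefinible}.

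Second, I would pick $\alpha,\beta\in V$ that are generic independent over the parameters used so far, set
\[
\gamma \;=\; \lambda^{-1}\bigl(\lambda(\alpha)+\lambda(\beta)\bigr),
\]
and in the same fashion produce six elements of $U$ patterned on the canonical group configuration of $(K,+)$, namely the $\lambda^{-1}$-preimages of the tuple $(a,\,a+b,\,a+b+c,\,b,\,c,\,b+c)$ for suitable generic $a,b,c$ close to $0$. After shrinking $V$ so that all six elements lie in the prescribed neighborhood, applying $\lambda$ componentwise yields, by construction, a group configuration of $(K,+)$ over the chosen parameters.

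The main obstacle is checking that the same six positions, pushed through $i$, form a group configuration of $\mathcal{H}$. The required non-trivial algebraic dependences must be witnessed in the reduct $\mathcal{H}$, not only in $\mathbb{K}$. For this I would produce $\mathcal{H}$-definable curves relating the corresponding $H$-elements by combining $\otimes$-translates of $X$ in the manner of Lemma~\ref{additionAndCompositionLemma}: the slope at $(1_{H},1_{H})$ of such a composition encodes the sum of the relevant $\lambda$-values, so points whose $\lambda$-values obey the additive configuration equations are aligned by a single $\mathcal{H}$-definable curve. Strong minimality of $\mathcal{H}$ combined with the $\mathbb{K}$-dimension calculations upgrades these relations to the rank equalities demanded by the group-configuration axioms. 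Interalgebraicity with the additive configuration in $(K,+)$ is then immediate, since the two configurations share the same underlying tuple and both $\lambda$ and $i$ are $\mathbb{K}$-definable and locally invertible on the relevant open neighborhood.
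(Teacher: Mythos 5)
Your overall strategy --- identify the first coefficient $d_N$ of $h_a$ that genuinely varies, observe that it behaves additively under composition, and use it to build a configuration transported by $i$ --- is indeed the strategy the paper follows. However, there are two concrete gaps.

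First, the mechanism you invoke for the additive behaviour is wrong. You say the \emph{slope} at $(1_H,1_H)$ of compositions (via Lemma~\ref{additionAndCompositionLemma}) encodes the sum of $\lambda$-values. But the slope is the \emph{first} derivative, and Assumption~\ref{assmDerivativeConstant} says precisely that $h'_a(1)$ is constant --- so the slope tells you nothing. What you actually need is that the $N$-th coefficient $d_N$ of $h_a\circ h_b$ equals $d_N(a)+d_N(b)$. That identity is \emph{not} automatic: the $N$-th coefficient of a composition $h_a\circ h_b$ is a polynomial in $d_1(a),\ldots,d_N(a),d_1(b),\ldots,d_N(b)$ with cross terms. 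To kill those cross terms the paper first \emph{normalizes}, replacing $Y$ by $Y_c\circ Y_b^{-1}$ for suitable generic $(b,c)$ so that $d_1\equiv 1$ and $d_n\equiv 0$ for $1<n<N$, and only then proves $d_N(a,b)=d_N(a)+d_N(b)$ (Claim~\ref{claimSumaDerivadas}). Your proposal omits this normalization entirely, and without it the additivity you rely on fails. Related: you assert $\lambda$ has nonzero linear term and is therefore locally invertible, but Lemma~\ref{lemmaN} gives no such thing; the paper avoids needing a local inverse of $d_N$ and instead just uses that its image contains an open ball, reading off preimages under $d_N$ rather than through $\lambda^{-1}$.

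Second, the verification that the transported tuple is a group configuration \emph{for $\mathcal H$} (not merely for $\mathbb K$) is not a formality that ``strong minimality plus $\mathbb K$-dimension calculations'' upgrades automatically. The $\mathcal H$-algebraicity of, say, $i(c)$ over $i(a),i(b)$ is witnessed by an explicit $\mathcal H$-definable family $Z_\delta=Y_{i(a)}\circ Y_{i(b)}\circ Y^{-1}$ and a continuity-of-roots intersection count showing $|Y_e\cap Z|>|Y_{i(c)}\cap Z|$ for $e$ near but distinct from $i(c)$, so that $i(c)$ lies in a finite $\mathcal H$-definable set. A genuine subtlety here, which your sketch does not address, is that $Y_{i(c)}\cap Z$ can have intersection points outside the chart $i(U)\times i(U)$, and the counting argument must be rerun on translated charts $m_j\circ i$ to ensure those points persist for nearby $e$. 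Skipping this step leaves the dependence side of the group configuration unproved. (Minor: your reason that $N(h)<\infty$ mixes the additive normal form $h(x+a)=h(x)+h(a)$ with the multiplicative setting; the correct conclusion from all $d_n(a)$ constant is $h(xa)=h(x)h(a)$, a multiplicative character, which is what contradicts $X$ not being $G$-affine.)
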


 \begin{proof}
Remember that we fixed $Y\subseteq H^2$ a one dimensional $\mathcal H$-definable subset, and $h:U\to M$ an analytic function such that for all $x\in U$, $(x,h(x))\in Y^*$. Replacing $B$ by $B\cap U$ we assume that $B=U$ and replacing $V$ for $V\cap B$ may assume that $V\subseteq B$.

Note that if $a\in U$ then the graph of $h_a(x):=h(xa)/h(a)$ is contained in $(Y^{*})_{(a,h(a))}$ for $x$ in some neighborhood of $1$. For $a\in U$ we will write $Y^{*}_a$ instead of $(Y^*)_{(a,h(a))}$.

As $h_a$ is analytic there is some power series converging in a neighborhood of $1$ that represents $h_a$, say 
\begin{equation}\label{haDefEquation}
h_a(x)=1+\sum_{n\geq 1} d_n(a) (x-1)^n.
\end{equation}

By Assumption \ref{assmDerivativeConstant} we know that $d_1(a)$ is finite as $a$ varies in $U$. So we may assume it is constant. Let $N$ be the minimum natural number such that 
$\{d_N(a):a\in U\}$ is infinite. 

We may assume that for $n<N$ the set $\{d_n(a):a\in U\}$ is a singleton.

Exchanging $Y$ by $Y_c\circ Y_b^{-1}$ for $(b,c)$ generic enough we may assume that for  all $a\in U$, $d_1(1)=d_1(a)=1$ and  for $1<n<N$, $d_n(1)=d_n(a)=0$. That follows from a straightforward computation of the coefficients of $h_a\circ h_b^{-1}$.

Note that if $a$ and $b$ are close enough of $1$ then the function $h_a\circ h_b$ is analytic in some neighborhood of $1$. So if the power expansion of $h_a\circ h_b$ is given by:
\begin{equation}\label{equationComposition}
    h_a(h_b(x))=1 +\sum_{n\geq 1} d_n(a,b)(x-1)^n,
\end{equation} 
Then we have:
\begin{claim}\label{claimSumaDerivadas}
 $d_N(a,b)=d_N(a)+d_N(b)$.
\end{claim}

\begin{proof}(Proof of Claim \ref{claimSumaDerivadas}) 
Note that

$$
\begin{array}{ll}
  h_a(h_b(x))& =1+ \sum_{n\geq 1} d_n(a)\left (1+\sum_k b_k(b) (x-1)^k - 1\right)^n \\
     & =1+ \sum_{n\geq 1} d_n(a)\left (\sum_k b_k(b) (x-1)^k \right)^n
\end{array}
$$

So the coefficient that multiplies $(x-1)^N$ is 
$$d_N(a,b)=d_1(a) b_N(b) + d_2(a)p_1(b) + \cdots + d_N(a)(b_1(b))^N$$

Where for $1<i<N$ $p_i$ is some polynomial on $b$. Given that for each such $i$ $d_i(a)=0$ and $d_1(a)=d_1(b)=1$ then $$d_N(a,b)=d_N(a)+d_N(b)$$ as desired.
\end{proof}
Note that for each $n\in \mathbb N$ the map from $U$ to $K$ given by $a\mapsto d_n(a)$ is $\mathbb K$-definable. So, as $\{d_N(a):a\in U\}$ is an infinite $\mathbb K$-definable subset of $K$, it contains an open ball, say $B$ with center $t$. Assume $t=d_N(1)$ and call $B_0=\{x-t:x\in B\}$ a ball around $0$. 

Find $a,b,e\in U$ generic independent (in the sense of $\mathbb K$). Then $d_N(a)-t$ and $d_N(b)-t$ are in $B_0$, their sum is in $B_0$ so $t+(d_N(a)-t)+(d_N(b)-t)=d_N(a)+d_N(b)-t\in B$ so there is some $c\in U$ such that $d_N(c)=d_N(a)+d_N(b)-t$. Fix such $c$. 

In the same way  there is $f\in U$ such that $d_N(f)=d_N(a)+d_N(e)-t$. And there is also $g$ such that $d_N(g)=t+(d_N(a)-t)+(d_N(b)-t)+(d_N(c)-t)$.

Let $G_2=(d_N(a)-t,d_N(b)-t,d_N(c)-t,d_N(e)-t,d_N(f)-t,d_N(g)-t)$. It is a group configuration for $(K,+)$ in $\mathbb K$.
Define $G_1=(i(a),i(b),i(c),i(e),i(f),i(g))$. Note that $G_1$ is interdefinable with $G_2$ (in $\mathbb K$) just because the map $z\mapsto d_N(i^{-1}(z))-t$ is $\mathbb K$ definable. 

\begin{claim}\label{claimGC}
 $G_1=(i(a),i(b),i(c),i(e),i(f),i(g))$ is a group configuration for $\mathcal H$.
\end{claim}
\begin{proof}(Proof of Claim \ref{claimGC})

Let us see for example that $i(c)\in \acl_\mathcal H(i(a),i(b))$. Let $Z=Y_{i(a)}\circ Y_{i(b)}\circ Y^{-1}$. This is a $\mathcal H$-definable set with parameters $(i(a),i(b))$.

We will prove that if we define 
$$E_{Z}:=\{e\in Y:|Y_e\cap Z|>|Y_{i(c)}\cap Z|\},$$ 
then it is an infinite set. 

For this we will prove first that  $$F_{Z}:=\{e\in Y^{*}:|Y^{*}_e\cap Z^*|>|Y^*_{c}\cap Z^*|\}$$ is an infinite set.


For proving that list 
$$Y^*_{c}\cap Z^*=\{x_1=(1,1),x_2,\ldots,x_n\},$$
and for $i=1,\ldots,n$ take $W_i=W'_i\times W''_i\subseteq U\times U$ pairwise disjoint open sets with $x^{*}_i\in W^*_i$. We will prove that there is an open subset $V\subseteq U$ containing $c$ such that for all $e\in V\setminus\{c\}$ the curve $Y^*_e$ intersects $Z^*$ at two distinct points in $W_1$ and at least one inside of $W_i$ for $i>1.$ 

First note that there are just finitely many $z\in U$ such that $d_n(z)=d_N(a)+d_N(b)-t$. Otherwise there will be an open set where the equality holds an then by Fact \ref{identityTheoremf} $d_N(z)$ is constant as $z$ varies in $U$ and we are assuming that this is not the case. So we may assume that $d_N(z)\neq d_N(a)+d_N(b)-t$ for $z\in U\setminus\{c\}$

Now define the function $H(x,e)=h_a\circ h_b\circ h^{-1}- h^{-1}_e(x)$ that is analytic in some neighborhood of $(1,c)$. As the function $H(\_,c)$ has a zero of multiplicity $N$ at $x=1$. Using Fact \ref{continuityOfRootsTheoremf} we find some open sets $V_1\subseteq W'_1$ and $U_1\subseteq U$ with $(1,c)\in  V_1\times U_1$ such that for each $e\in U_1$ the function $x\mapsto H(x,e)$ has $N$ zeros (counting multiplicities) in $V_1$. Note that the function $x\mapsto H(x,e)$ has a zero of order $N$ at $x=1$ just for finitely many $e$, otherwise by \ref{identityTheoremf} $d_N(a)$ should be constant in some open and we are assuming that it is not the case. So, by shrinking $U$ we may assume that if $e\neq c$ there is no zero of multiplicity $N$ at $U$. Therefore there are at least two different zeros at $V_1$. That is, there is two different points $x,x'\in W'_1$ such that $y:=h_a\circ h_b\circ h^{-1}(x)=h_c(x)$. By shrinking $W'_1$ one may assume that $y\in W''_1$. So as the graph of $h_a\circ h_b\circ h^{-1}$ is contained in $Z^*$ and the graph of $h_c$ is contained in $Y^*_c$ one has that $(x,y)\in Z^*\cap Y^*_e \cap W_1$. The same  is true for $x'$ so for each $e\in U_1$ we can find two different points in $ Z^*\cap Y^*_e \cap W_1$.

The same argument can be applied for finding each $i>1$, exists an open neighborhood of $c$, say $U_i$, such that for all $e\in U_i$, $|Z^*\cap Y^*_e\cap W_i|\geq 1$. Then for each $e\in V:= U_1\cap U_2\cap \ldots \cap U_n$ there is at least $n+1$ different points at $Z^*\cap Y^*_e$.

Now if 
$$Y^*_{c}\cap Z^*=\{x_1=(1,1),x_2,\ldots,x_n\},$$
then $i(x_i)$ are disctint elements of $Y_c\cap Z$ So we can list

$$Y_{i(c)}\cap Z=\{y_1=(1,1),y_2=i(x_2),\ldots,y_n=i(x_n),z_1,\ldots z_k\}.$$ Where each $z_j\in H\times H\setminus i(U)\times i(U)$. Let $z_j=(s_j,t_j)$ then if we define $U_j=s_j i(U)$ it is an infinite set contining $s_j$ and $m_j\circ i:U\to U_j$ is injective, where $m_j:G\to G$ is multiplication by $s_j$. By shrinking $U$ we may assume that the $V_j$ are pairwise disjoint. 

We can apply the same argument as before but changing the function $i$ by $m_j\circ i$  for finding an open $V_j$ set around $c$ such that for each $e$ in such a neighborhood there is at least one point at $Y_e\cap Z \cap (U_j\times U_j)$. Therefore for each $e\in V\cap V_1\cap\ldots\cap V_n$ the curve $Y_(i(e))\cap Z$ has at least $n+1$ points at $i(U)\times i(U)$ (just the image of the elements in $Z^*\cap Y_e^{*}$) and at least one at $U_i$ for each $U_i$ so in total has at least $n+k+1$ intersection points.

Therefore $E_Z$ is infinite and as $X$ is strongly minimal in $\mathcal H$ we are done.

\textit{(End of the proof of Claim \ref{claimGC})}
\end{proof}
Therefore we use Theorem \ref{groupConfig} and conclude that there is a group definable in $\mathcal H$ and there is a group configuration for $\mathcal H$ that it is inter definable with a configuration for de additive group $(K,+)$.

 \end{proof}
 So there is a group interpretable in $\mathcal M$ whose group configuration in interalgebraic (in $\mathbb K$) with the group configuration of $(K,+)$.

 
 
     
 

 

 \section{Finding a Field}
 
 In this section we prove Theorem \ref{thmMultiplicative}.

First we need a Fact from \cite{HY} (Remark 7.8)
\begin{fact}\label{stronglyInternalf}
Let $H=F/\sim$ be a $\mathbb K$-interpretable group with $F\subseteq K^m$ $\mathbb K$-definable. Assume that each $\sim$-equivalence class is finite. Then, after quotient $H$ by a finite normal subgroup, there is a $\mathbb K$-definable injection $f:H'\to K^d$ for some $d$, where $H'$ is an infinite and definable subset of $H$.
\end{fact}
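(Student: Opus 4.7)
The strategy is to code each finite $\sim$-class uniformly as an unordered tuple, hence as a point of the symmetric product $\mathrm{Sym}^k(K^m) = (K^m)^k / S_k$, and then embed this product $\mathbb K$-definably into some $K^N$ via the ring of $S_k$-invariant polynomials. This is the standard way to ``eliminate'' finite imaginaries in ACVF, and works uniformly thanks to quantifier elimination (Fact \ref{QEf}). First I arrange a common fibre size: since $\sim$ is $\mathbb K$-definable and every class is finite, a standard compactness argument applied to the partial types $\{\,|[x]_\sim| \geq m : m \in \mathbb N\,\}$ produces a uniform bound $n$ with $|[x]_\sim| \leq n$ for all $x \in F$. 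Stratify $F = F_1 \sqcup \cdots \sqcup F_n$ by fibre size, each $F_k$ being $\mathbb K$-definable and $\sim$-saturated; at least one $F_k$ has $\pi(F_k)$ infinite in $H$ (where $\pi \colon F \to H$ is the quotient map), and we fix such a $k$ and replace $F$ by $F_k$, so that every class now has exactly $k$ elements of $K^m$.

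For the coding, pick a finite set of polynomial generators $g_1, \ldots, g_N$ of $K[y_{1,1}, \ldots, y_{k,m}]^{S_k}$, where $S_k$ permutes the blocks $y_i = (y_{i,1}, \ldots, y_{i,m})$. Power sums $\sum_{i=1}^k y_i^\alpha$ indexed by multi-indices $\alpha \in \mathbb N^m$ with $|\alpha| \leq k$ generate in residue characteristic zero, and in positive characteristic they are augmented by elementary symmetric polynomials, yielding a finite generating set by classical invariant theory. Define $\Phi \colon F \to K^N$ by $\Phi(x) := (g_1(e(x)), \ldots, g_N(e(x)))$, where $e(x) \in (K^m)^k$ is any enumeration of $[x]_\sim$. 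By Fact \ref{QEf} the relation ``$e$ enumerates $[x]_\sim$'' is $\mathbb K$-definable; $\Phi$ is independent of the chosen enumeration by $S_k$-symmetry, hence is $\mathbb K$-definable; and, because the invariants separate $S_k$-orbits on $(K^m)^k$, $\Phi$ factors through $\pi$ to a $\mathbb K$-definable injection $f \colon \pi(F) \to K^N$.

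The ``quotient by a finite normal subgroup'' clause is a technical refinement: in the applications envisioned for the fact (in particular inside the proof of Theorem \ref{thmMultiplicative}), one often wants the image $H'$ to respect the group structure of $H$ rather than be an arbitrary definable subset. One factors out a finite normal $N \triangleleft H$ that absorbs any intrinsic group-theoretic ambiguity of the coding; since $\pi(F)$ is infinite, its image in $H/N$ remains infinite and $f$ descends to the quotient. The principal technical obstacle is the positive-residue-characteristic invariant theory: when $\mathrm{char}(K)$ divides $k!$ the power-sum polynomials no longer generate $K[y_{1,1}, \ldots, y_{k,m}]^{S_k}$, so one must exhibit an explicit finite $\mathbb K$-definable generating set. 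This is supplied by classical results (Chevalley--Shephard--Todd and allied finiteness theorems) together with Fact \ref{QEf}, but represents the only delicate point of the argument.
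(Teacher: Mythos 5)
The paper does not prove this statement: it is quoted as Remark~7.8 of \cite{HY} and used as a black box, so there is no internal proof to compare against. Judged on its own terms, your strategy is the right one and essentially the standard elimination of finite imaginaries over an algebraically closed field: uniformize the fibre size by compactness, pass to a stratum $F_k$ with all classes of exact size $k$ and $\pi(F_k)$ infinite, and then code each $k$-element class by a finite tuple of $S_k$-invariant polynomial values, which separates orbits because $K$ is algebraically closed. The compactness step and the definability-of-the-graph step are both fine (though the latter really only needs definability of $\sim$, not Fact~\ref{QEf}).

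Two corrections are needed. First, the citation: finite generation of the invariant ring $K[y_{1,1},\ldots,y_{k,m}]^{S_k}$ for a finite group acting linearly is Noether's theorem on invariants, and it holds in \emph{every} characteristic; Chevalley--Shephard--Todd is the wrong result to invoke (it characterizes when the invariant ring is \emph{polynomial}, which is irrelevant here). Consequently the positive-characteristic case is not the ``only delicate point'' you flag; orbit separation for a finite group over an algebraically closed field is routine in all characteristics (for two disjoint orbits $O_1,O_2$ choose $f$ with $f\equiv 1$ on $O_1$, $f\equiv 0$ on $O_2$, and take $\prod_{\sigma\in S_k}\sigma\cdot f$), and any invariant is a polynomial in the chosen generators, so those generators already separate.

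Second, the paragraph on quotienting by a finite normal subgroup is hand-waved and in fact beside the point for your construction: the map $f\colon\pi(F_k)\to K^N$ you build is already injective and $\mathbb K$-definable with no quotient taken, and $\pi(F_k)$ is an infinite $\mathbb K$-definable subset of $H$. There is no ``group-theoretic ambiguity of the coding'' to absorb, since $f$ is not being asked to be a homomorphism. Either state clearly that the finite normal subgroup is unnecessary for the conclusion you actually reach, or, if you believe it is needed for some reason (e.g.\ to make $H'$ a subgroup rather than a subset, or to cover a formulation in the source you have not seen), identify that reason explicitly rather than gesturing at it.
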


\begin{proof} (Proof of Theorem \ref{thmMultiplicative})

 As we said, we may assume \ref{assmDerivativeConstant}. Let  $(G,\oplus)$ an interpretable group as provided by Proposition \ref{propGroupInterpretableAdditive}. Let $e$ denote its neutral element. We may assume that $G$ is strongly minimal as a interpretable set in $\mathcal H$.
 
 Let $L\subseteq G$ and $f:L\to K^d$ as provided by Fact \ref{stronglyInternalf}, by taking translations one may assume that $e\in L$ an then group operation on $G$ can be used for defining some (partial) operation on $f(L)$ moreover we can find $a,b,c$ generic independent (in the sense of $\acl$) elements of $G$ inside of $L$ and then $f(a),f(b),f(c)$ are generic independent elements of $f(L)$.

 Thus one can use Theorem \ref{confiGroupIso} and find $U_1$, an open subset of $f(L)$ containing $f(e)$, $U_2$ a neighborhood of $0$ in $K$ and $\psi:U_2\to U_1$ a $\mathbb K$-definable bijection with analyitic inverse. Let $\phi=f^{-1}\circ \psi $ a $\mathbb K$-definable function, note that for $x,y\in U_2$  if $x+y\in U_2$ then $\phi(x+y)=\phi(z)\oplus \phi(y)$. As $U_2$ is open it contains an open ball around $0$. Replace $U_2$ for such a ball and then one can assume $U_2$ is a subgroup of $(K,+)$, so $$H:=f^{-1}(U_1)$$ is a subgroup of $G$ isomorphic to $(U_2,+)$.


So $\phi:U_2\to G$ is a $\mathbb K$ definable injective group homomorphism. 

It is enough then if we prove the next general statement:

\begin{prop}\label{groupInterpretsField}
If $G$ is a $\mathbb K$-interpretable group and $\phi:U_2\to G$ is a $\mathbb K$-interpretable map such $\phi(x+y)=\phi(x)\oplus\phi(y)$ for each $x,y\in U_2$, then if $X\subseteq G\times G$ is a $\mathbb K$-definable set of dimension one that is not $G$-affine, then the structure  $\mathcal G=(G,\oplus,X)$ interprets a field
\end{prop}

\begin{proof}(Proof of Proposition \ref{groupInterpretsField})

For $Y\subseteq G\times G$ define $$Y^*:=\phi^{-1}(Y\cap H\times H)\subseteq U_2\times U_2$$ 
and for $a\in H\times H$ define $$a^*:=\phi^{-1}(a)$$

As $G$ with the induced structure is not locally modular there is some $Y\subseteq G^2$ $\mathcal M$-definable that is not $G$-affine.

In this case $Y^*$ is a curve on $U_2\times U_2$  that is not 
$(U_2,+)$-affine. 

Considerate the $\mathcal G$-definable family of curves
$$X_a:=(Y\ominus Y_a)\circ (Y\ominus Y_c)^{-1},$$ for $c\in H\times H \cap  Y$ fixed and $a\in Y$.


Therefore if for $a\in Y^*$ we define,

$$X^*_a=(Y^*-Y^*_{a})\circ(Y^*-Y^*_{c^*})^{-1}.$$

Where $c^*$ is some fixed element of $Y$, we can apply Claim \ref{xIsGoodFamily} to the structure $\mathcal U$ and conclude that  $(X^*_a)_{a\in Y^*}$  is a good family of curves for $\mathcal U$. 
  
  Let $U\subseteq U_2$ and $h:U\to U_2$ as given by the data of a good family of curves. Let be
  $$\mathfrak a=(a_1,a_2,b_1,b_2,b)\in K^5$$
  as given by Lemma \ref{tupla5dim} and let 
  $$G_1=(\alpha,\beta,\gamma,p,q,r)$$
  be as given by Lemma \ref{defiDerivadaConf}. 
  
  So if $\alpha=(\alpha_1,\alpha_2)$ then $\alpha_1\in U $ so there is some $\hat \alpha_1\in H\times H$ such that $(\hat \alpha_1)^*=(\alpha_1,h(\alpha_1))$ define $\hat \alpha=(\hat \alpha_1,\hat \alpha_2)$ and the same for $\beta$, $\gamma$ and also for $p$, $q$ and $r$.
  
  We claim that 
  $$\hat G_1:=(\hat \alpha,\hat \beta,\hat \gamma, \hat p, \hat q, \hat r)$$
  is a field configuration for $\mathcal G$.
  
  Again we only have to prove the $\mathcal G$-dependence relations. So let's prove for example that $\hat q\in \acl_\mathcal G(\hat \alpha,\hat p)$

  For $\delta\in Y$ define
  \begin{equation}\label{equationDefZ2}
Z_\delta:=(X_{\hat\alpha_1}\circ X_{\hat p})\ominus (X\circ X_{\hat\alpha_1})\oplus (X\circ X_{\hat\alpha_2})\ominus (X_{\delta}\circ X)
\end{equation}

It is a $\mathcal G$ definable family of subsets of $G\times G$.
 
 
For $\delta\in Y$ define $$Z_\delta^{e}=\{x\in G:(x,e)\in Z_\delta\}$$ Since the family $(Z_\delta)_{\delta\in Y}$ is $\mathcal G$-definable with parameters $\alpha$ and $p$  it is enough to show that $$\{\delta\in Y:|Z_{\delta}^{e}|\leq|Z_{q}^{e}|\}$$ is finite. As we may assume that Morley degree of $Y$ computed in $\mathcal G$ is $1$, it is enough to show that 
$$\{\delta \in Y: |Z_\delta^0|>|Z_q^0|\}$$ is infinite.

Note that for $\delta \in Y^*$,

$$Z^*_\delta=(X^*_{\alpha_1}\circ X^*_p)-(X^*\circ X^*_{\alpha_1})+ (X^*\circ X^*_{\alpha_2})-(X^*_{\delta}\circ X^*)$$

 We proceed to prove that there is an open subset of $q$ such that for all $\delta$ in that open one has that 
 $$|Z_{\hat\delta}^e|>|Z_{\hat q}^e|$$
 
 For this list $Z^e_{\hat q}=\{(e,e)=x_1,\ldots,x_l,x_{l+1},\ldots,x_n\}$ in such a way that $x_1,\ldots,x_l\in H\times H$ and $x_{l+1},\ldots,x_n\in G\times G\setminus H\times H$.

 By Claim \ref{claimIntSube} there is an open subset $W\ni q$ such that for all $\delta \in W\setminus\{q\}$ one has that $$|(Z^*_{\delta})^e|>|(Z^*_q)^e|.$$
 Moreover 
 $$|(Z^*_\delta)^e|=|Z_{\hat\delta}^e\cap H\times H|.$$
 and the same it is true for $Z_q$.
 
 So for all $\delta \in W $ 
 $$|Z_{\hat\delta}^e\cap H\times H|>|Z_{\hat q}^e\cap H\times H|$$
 
Moreover just as in Claim \ref{claimNoPierdoCeros} we have:

  For all $i$ with $l<i\leq n$ if $V_i$ is an open set containing $x_i$, there is $W_i$ a neighborhood of $q$ such that if $\delta \in W_i\setminus\{q\}$ then $|Z_{\hat\delta}^e\cap V_i|\geq 1$

  So if we take $V_i$ disjoint not intersecting $H$ and define
  $$\bar W=W\cap W_{l+1}\cap\ldots\cap W_n,$$ 
  then for all $\delta\in \bar W\setminus q$ 
  $Z_{\hat\delta}^e$ has more than $l$ points on $H\times H$ and at least $1$ point in $V_i$ for $i>l$ so it has more than $n$ points in total. 
  
  So $\hat G_1$ is a field configuration for $\mathcal G$ therefore $\mathcal G$ interprets a field and then.
  
  \textit{(End o proof of Proposition \ref{groupInterpretsField})}
  \end{proof}
  
  Therefore as $\mathcal G$ is intepretable in $\mathcal H$, one has that $\mathcal H$ interprets a field.

\end{proof}

\chapter{General Case}

In this chapter we present a strategy for proving the general definable one dimensional case, that is we present an outline of the prove of:

\begin{conjecture}
Let $N$ be a $\mathbb K$-definable set of dimension one. Let $\mathcal N=(N,\ldots)$ a strongly minimal, non locally modular structure over $N$ that is definable in $\mathcal K$, then $\mathcal N$ interprets a field.
\end{conjecture}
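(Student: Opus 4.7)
The plan is to follow the three-step strategy outlined in the introduction: (i) interpret a one-dimensional group in $\mathcal{N}$; (ii) show that this group is locally isomorphic (as a $\mathbb{K}$-interpretable group) to either $(K,+)$ or $(K\setminus\{0\},\cdot)$; and (iii) apply Theorem \ref{propAdditive} or Theorem \ref{thmMultiplicative} accordingly.

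For (i), I would first use Fact \ref{QEf} together with Fact \ref{decompositionLemma}, following the line of Proposition \ref{assmGraphpC}, to present a generic $\mathcal{N}$-definable plane curve $Y\subseteq N^2$ as the graph of an analytic function $h:U\to K$ in a local coordinate on $N$. Since $\mathcal{N}$ is non-locally modular there is a two-dimensional $\mathcal{N}$-definable family of such curves, and after reducing to a $\mathbb{K}$-definable almost-disjoint subfamily one can run the coefficient analysis of Section \ref{findingAGroup}: locate the minimal index $n$ for which the coefficient $d_n(a)$ of the local expansion of $h_a$ at the distinguished point varies nontrivially with the curve parameter $a$, and verify that $d_n(a,b)=d_n(a)+d_n(b)$ under composition, in the spirit of Claim \ref{claimSumaDerivadas}. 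A tuple $(a,b,c,e,f,g)$ realizing an additive configuration for $(K,+)$ in the values of $d_n$ transports back to a group configuration for $\mathcal{N}$, and Fact \ref{groupConfig} then yields a $1$-dimensional group $(G,\oplus)$ interpretable in $\mathcal{N}$.

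For (ii), the group $G$ is $\mathbb{K}$-interpretable and of $\mathbb{K}$-dimension one. Using Fact \ref{stronglyInternalf}, one passes---after quotienting by a finite normal subgroup---to a $\mathbb{K}$-definable injection of an infinite piece of $G$ into some $K^d$, transporting $\oplus$ to a $\mathbb{K}$-definable partial operation on a subset of $K^d$ which, by Fact \ref{QEf}, is analytic in a neighborhood of the identity. A one-parameter analysis of this group germ---studying the first nonvanishing term in the Taylor expansion of the local group law and iterating the composition as in Lemma \ref{N} and Lemma \ref{lemmaN}---should show that, after a $\mathbb{K}$-definable analytic change of coordinates, the law is either $x+y$ or $xy$, at which point Theorem \ref{confiGroupIso} produces the required $\mathbb{K}$-definable local isomorphism with $(K,+)$ or $(K\setminus\{0\},\cdot)$.

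With (ii) in hand, any $\mathcal{N}$-definable curve $X\subseteq G^2$ witnessing the non-local-modularity of $(G,\oplus)$ inside $\mathcal{N}$ makes $(G,\oplus,X)$ strongly minimal and non-$G$-affine, so Theorem \ref{propAdditive} or Theorem \ref{thmMultiplicative} produces a field interpretable in $\mathcal{N}$. The main obstacle is step (ii): in characteristic zero it amounts to the classical statement that a one-dimensional commutative $K$-analytic group germ is locally $\mathbb{G}_a$ or $\mathbb{G}_m$ (elliptic curves falling into the $\mathbb{G}_m$ case via Tate uniformization), but in positive characteristic one must additionally control Frobenius twists, and more seriously, $\mathbb{K}$-\emph{interpretable} (as opposed to merely $\mathbb{K}$-definable) one-dimensional groups over ACVF have not been classified in the generality needed here. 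Establishing the required classification of one-dimensional $\mathbb{K}$-interpretable groups is the principal piece of work lying beyond the present document.
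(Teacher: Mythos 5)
The statement you are addressing is a conjecture, and the paper offers only an outline of a strategy for it rather than a proof; your write-up is likewise (and honestly) an outline, so the right comparison is between strategies. Your step (i) matches the paper's: run the Taylor-coefficient analysis of Section~\ref{findingAGroup} after presenting a generic curve as the graph of an analytic map, using the method of Theorem~4.25 of \cite{HS} to get by without an a priori group structure on $N$. Where you diverge is step (ii). You propose to push the interpretable group $G$ into some $K^d$ via Fact~\ref{stronglyInternalf} and then classify the resulting one-dimensional $K$-analytic group germ up to local isomorphism with $\mathbb G_a$ or $\mathbb G_m$; you correctly flag that such a classification of one-dimensional $\mathbb K$-\emph{interpretable} groups is not available, especially in positive characteristic. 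The paper's outline sidesteps this entirely: because the group configuration for $G$ is \emph{built out of} Taylor coefficients, it is already interalgebraic (in $\mathbb K$) with a group configuration of $(K,+)$ or $(M,\cdot)$ depending on which coefficient first varies, and then Theorem~\ref{confiGroupIso} yields the local isomorphism directly from that interalgebraicity, with no classification of abstract one-dimensional groups required. This is the decisive simplification that makes the paper's route plausible where yours stalls. Finally, a citation point: in step (iii), for the case where $G$ is locally isomorphic to $(K,+)$ you should invoke Proposition~\ref{groupInterpretsField} rather than Theorem~\ref{propAdditive}, since the latter requires $G$ to literally be a subgroup of $(K,+)$, whereas in the general case $G$ is only a $\mathbb K$-interpretable group with a $\mathbb K$-definable local homomorphism from a neighborhood of $0$.
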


For this the first step is finding a group, for doing so a procedure very similar as the one presented in Section \ref{findingAGroup} should works. In this the group structure of $G$ is just used for finding a family of curves that has no analytic points and where one can use continuity of roots. One should be able to find such a family without the group structure just as is done in the proof of Theorem 4.25 of \cite{HS}.

Moreover as the group configuration is constructed using the coeficients of the Taylor expansion of the curves, there are two cases: 

Either the first derivative is infinite (as one varies in the family) and in this case the group configuration is interalgebraic with some group configuration of $(K,+)$. Or the first derivative is finite and there is some coefficient of the taylor expansion that is infinite. 

Then by Theorem \ref{confiGroupIso} either $G$ is locally isomorphic to $(K,+)$ or it is locally isomorphic to $(M,\cdot)$. In the first case we use Proposition \ref{groupInterpretsField} and in the second we use Theorem \ref{thmMultiplicative}.

\newpage
\bibliographystyle{alpha}
\bibliography{biblio}
\end{document}